\newtheorem{theorem}{Theorem}[section]
\newtheorem*{theorem*}{Theorem}
\newtheorem{lemma}[theorem]{Lemma}
\newtheorem{corollary}[theorem]{Corollary}
\newtheorem{prop}[theorem]{Proposition}
\theoremstyle{definition}
\newtheorem{definition}[theorem]{Definition}
\newtheorem{example}[theorem]{Example}
\theoremstyle{remark}
\newtheorem{remark}[theorem]{Remark}
\newcommand{\PH}[1]{\mathrm{{PH}}}
\def\Z{{\mathbb Z}}
\def\N{{\mathbb N}}
\def\R{{\mathbb R}}
\def\C{{\mathbb{C}}}
\def\C{\mathbb{C}}
\def\F{\mathbb{F}}
\def\N{\mathbb{N}}
\def\Q{\mathbb{Q}}
\def\R{\mathbb{R}}
\def\T{\mathbb{T}}
\def\X{\mathbb{X}}
\def\Z{\mathbb{Z}}
\def\SW{\mathbb{SW}}
\def\aa{\mathbf{a}}
\def\dd{\mathbf{d}}
\def\kk{\mathbf{k}}
\def\tt{\mathbf{t}}
\def\uu{\mathbf{u}}
\def\vv{\mathbf{v}}
\def\xx{\mathbf{x}}
\def\zz{\mathbf{z}}
\def\bcd{\mathsf{bcd}}
\def\dgm{\mathsf{dgm}}
\def\<{\langle}
\def\>{\rangle}
\def\({\left(}
\def\){\right)}
\DeclareMathOperator*{\argmax}{\arg\!\max\;}
\definecolor{turquoise}{rgb}{0.2, 0.93, 0.75}
\begin{document}
	
	\title{Sliding window persistence of quasiperiodic functions}

	\author[Hitesh Gakhar]{Hitesh Gakhar}
	\address{
		\shortstack[l]{
			Department of Mathematics, \\
			Michigan State University \\
			East Lansing, MI, USA.}}
	\email{gakharhi@msu.edu}
	
	\author[Jose Perea]{Jose A. Perea }
	\address{
		\shortstack[l]{
			Department of Mathematics, and\\
			Khoury College of Computer Sciences,\\
			Northeastern University\\
			Boston, MA, USA.}}
	\email{j.pereabenitez@northeastern.edu}
	\thanks{\\[-.2cm] This work was partially supported by the National Science Foundation through grants DMS-1622301,  CCF-2006661,
and CAREER award  DMS-1943758. \\[.2cm]
Preliminary versions of these results  appeared in the first author's PhD thesis \cite{gakhar2020topological}.
	}

	\subjclass[2020]{Primary 55N31, 37M10; Secondary 68W05}
	
	
	
	
	\date{}
	
	\dedicatory{}
	
	\keywords{Topological data analysis, persistent homology, dynamical systems, sliding window embeddings, quasiperiodicity, time series analysis}
	
	\begin{abstract}
		A function is called quasiperiodic if its fundamental frequencies  are linearly independent over the rationals.
		With appropriate parameters,
		the sliding window point clouds of such  functions can be shown to be dense in  tori with dimension equal to the number of independent frequencies.
		In this paper, we develop  theoretical and computational techniques
		to study  the persistent homology of such sets.
		Specifically,
		we  provide   parameter optimization schemes  for sliding windows of quasiperiodic functions,
		and present theoretical lower bounds on their Rips persistent homology.
		The latter   leverages
		a recent persistent K\"{u}nneth formula.
		The theory is illustrated via computational examples  and an application to dissonance
		detection  in music audio samples.
	\end{abstract}
	
	\maketitle
	\section{Introduction}\label{section:Intro}
Recurrent behavior---both in time and space---is ubiquitous in nature.
\emph{Periodicity} and \emph{quasiperiodicity} are two prominent examples,
characterized by a vector of underlying non-zero frequencies:  If all pairwise   ratios are rational, then the recurrence is periodic, while
quasiperiodicity, on the other hand, occurs if there are at least two frequencies whose quotient is  irrational. Quasiperiodic recurrence  is at the heart of KAM (Kolmogorov-Arnold-Moser) theory \cite{broer2004kam},
it appears  as a signature of biphonation (i.e., the voicing of two simultaneous pitches) in mammalian vocalization \cite{mammalsubharmonics},
in climate change patterns on Mars \cite{pollack1982quasi},
in the oscillatory movement of the star TT Arietis \cite{hollander1992quasi},
and in the brain functioning in mice as reported by fMRI scans \cite{belloy2017dynamic}.
The list goes on.

Quasiperiodicity in dynamical systems is  typically  studied with numerical methods including Birkhoff averages \cite{das2016measuring}, periodic approximations \cite{slater1967gaps,sos1958distribution}, estimation of Lyapunov exponents \cite{weixing1993quasiperiodic}, power spectra \cite{wojewoda1993complex}, and recurrence quantification analysis \cite{webber1994dynamical,zbilut2002recurrence}.
New techniques from  applied topology have emerged recently as  complements to these
traditional   approaches in the task of recurrence detection---specifically for periodicity and quasiperiodicity quantification---in time series data \cite{sw1pers,toroidal,tralie2018quasi}.
This novel framework combines two key ingredients: \emph{sliding window embeddings} and \emph{persistent homology}.

Sliding window (also known as time-delay) embeddings
provide a framework to reconstruct the topology of state-space  attractors in dynamical systems, given observed time series data.
Indeed, given parameters $d \in \N = \{0,1, \ldots \}$ (controlling the embedding dimension $d+1$) and $\tau \in  \{x\in \R \mid x > 0\}$ (the time delay)
the  sliding window embedding of   $f: \R \longrightarrow \C$ at $t\in \R$  is the vector
\begin{align}
	SW_{d,\tau}f(t) :=
	\begin{bmatrix}
		f(t) \\ f(t+\tau) \\ \vdots \\ f(t+d\tau)
	\end{bmatrix} \in \C^{d+1}.
\end{align}
The motivation behind this construction is  Takens' embedding theorem \cite{takens1981detecting},
which asserts that   if $f$ is the result of observing the evolution of a
(potentially unknown) dynamical system,
then
the underlying topology  of the sliding window  point cloud $\SW_{d, \tau}f:=SW_{d,\tau}f(\R)$---generically in $f$ and for appropriate parameters $d,\tau$---recovers that of the traversed portion of the state space.
In particular,  this is how attractors can be reconstructed from
observed time series data.

The topology of attractors constrains many properties of the underlying dynamical system (e.g., periodic orbits, chaos, etc)
and detecting these features in practice is where persistent homology has come into play \cite{robins1999towards}.
Persistent homology is a  tool from Topological Data Analysis
widely used to quantify multiscale homological features of shapes.
Its typical input  is a collection $\mathcal{K} = \{ K_\epsilon \}_{\epsilon \geq 0}$ of  spaces
with  $K_\epsilon \subset K_{\epsilon'}$ continuous for all $\epsilon \leq \epsilon'$.
This is called a \emph{filtration}.
The output in each dimension $j \in \N$ is
a multiset
\[
\dgm_j(\mathcal{K})\subset \{(x,y) \in [0,\infty]\times [0,\infty] \mid  0 \leq  x < y \}\] called the $j$-th \emph{persistence diagram} of $\mathcal{K}$, where
each pair $(a, b) \in \dgm_j(\mathcal{K})$ encodes a $j$-dimensional topological feature
(like a connected component, a hole, or a void) which appears at $K_a$  and disappears entering  $K_b$.
The quantity  $b - a$  is the \emph{persistence} of the feature, and typically measures significance across the filtration.

In data analysis applications the input to persistent homology  is  often a  metric space $(X,\dd_X)$---e.g., a sliding window point cloud $\SW_{d,\tau} f$---from which  the \emph{Rips (simplicial) complex}
\begin{equation}\label{eqn: RipsCmplx}
	R_\epsilon(X,\dd_X) := \left\{ \{x_0, \ldots, x_n\} \subset X \mid  \max_{0 \leq  j,k \leq n} \dd_X(x_j , x_k) < \epsilon
	\;\; ,\;\;
	n\in \N
	\right\}
\end{equation}
is computed at each scale $\epsilon \geq 0$, producing  the \emph{Rips filtration}
\begin{equation}\label{eqn: RipsFilt}
	\mathcal{R}(X,\dd_X) := \{R_{\epsilon}(X,\dd_X)\}_{\epsilon \geq 0}.
\end{equation}
Points in the   Rips persistence diagrams $\dgm_j^\mathcal{R}(X) := \dgm_j\big(\mathcal{R}(X,\dd_X)\big)$ quantify the underlying topology
of $X$ in that pairs $(a,b)$ with large persistence $b-a$ represent likely topological features of a continuous space
around which $X$ accumulates.

The  diagrams  $\dgm_j^\mathcal{R}(\SW_{d,\tau} f)$ have   shown to be  rich signatures for recurrence detection in time series,
with applications including:
periodicity quantification in gene expression  data \cite{perea2015sw1pers},
(quasi)periodicity detection in videos \cite{tralie2018quasi},
synthesis of slow-motion videos from repetitive movements \cite{tralie2018slowmotion},
wheezing detection  \cite{emrani2014persistent},
and  chatter prediction  \cite{khasawneh2018chatter}.
See  \cite{perea2019notices} for a recent  survey.
One of the main challenges   in these applications  is  the validation of   empirical results, which
stems, in part, from the current limited
theoretical understanding  of how $\dgm_j^\mathcal{R}(\SW_{d,\tau} f)$ depends on $f,d,\tau$ and $T$.
That said, there are recent explicit conditions on $f$ for $\SW_{d,\tau} f$ to
provide appropriate reconstructions \cite{xu2019twisty},
as well as
analyses of sliding window persistence for  periodic functions \cite{sw1pers}, and quasiperiodic functions of the form \cite{toroidal}
\begin{equation}\label{eqn: QuasiExample}
	f(t) = c_1 e^{it\omega_1} + \cdots + c_N e^{it \omega_N}.
\end{equation}
In Eq. (\ref{eqn: QuasiExample}) the  $\omega_n > 0$ are $\Q$-linearly independent (i.e., incommensurate), and the coefficients $c_n \in \C$  are nonzero.
Our goal in this paper   is to   extend \cite{sw1pers} and \cite{toroidal} to general quasiperiodic functions; i.e., those beyond  Eq. (\ref{eqn: QuasiExample}).

\subsection{Contributions}
The first contribution of this paper is methodological:
we develop techniques to study the persistent homology of sliding window point clouds from general quasiperiodic functions.
Specifically,
we show that if $f: \R \longrightarrow \C$ is  quasiperiodic  with
incommensurate frequencies
$\omega = (\omega_1,\ldots, \omega_N)$ (Definition \ref{def:quasiperiodic}), and if for $\kk \in \Z^N$, $K \in \N$,
we let
\[
\widehat{F}(\kk) = \lim_{\lambda \to \infty}
\frac{1}{\lambda}
\int_{0}^{\lambda} f(t) e^{-i\<\kk , t\omega \>} dt
\;\;\;\;\; , \;\;\;\;\;
S_K f(t) = \sum_{\|\kk\|_\infty \leq K } \widehat{F}(\kk)e^{i\<\kk, t\omega\>}
\]
then the Rips persistence diagrams $\dgm^\mathcal{R}_j(\SW_{d,\tau} f)$, $j\in \N$,
can be approximated   in bottleneck distance
by  $\dgm_j^\mathcal{R}
(\SW_{d,\tau} S_K f)$ as $K\to \infty$.
The   diagrams of  $\SW_{d,\tau}S_K f$ are then  studied directly with methods extending those of \cite{toroidal, sw1pers};  the approximation to $\dgm^\mathcal{R}_j(\SW_{d,\tau} f)$ is of order
$O\left( K^{ \frac{N}{2} - r } \right) $
when $\vert\widehat{F}(\kk)\vert = O\left( \|\kk\|_2^{-r}\right)$ and $r > N/2$
(Corollary \ref{Corollary: DGMapproximation}).

This approximation strategy leads to our second contribution: computational schemes
for optimizing the choice of parameters $d \in \N$ and $\tau> 0$, so that the geometry (and hence the Rips persistent homology) of the sliding window point cloud
$\SW_{d,\tau} f = SW_{d,\tau} f(\R)$ robustly reflects that of
an $N$-torus. Explicitly:
\\

\fbox{
	\parbox{0.9\textwidth}{
		\begin{enumerate}[leftmargin=*]
			\item Given $f$, estimate  the coefficients $\widehat{F}(\kk)$
			and their frequency locations $\<\kk,\omega\>$. This can be done numerically
			with methods leveraging   the Discrete Fourier Transform \cite{gomez2010collocation, laskar1993frequency} or Wavelet analysis \cite{vela2002time}.
			\item Let  $K \in \N$  be the smallest integer so that
			\[
			\mathsf{supp}(\widehat{F}_K) :=
			\left\{
			\kk \in \Z^N  \mid \widehat{F}(\kk) \neq 0 \mbox{ and } \|\kk\|_\infty \leq K
			\right\}
			\]
			spans an $N$-dimensional $\Q$-vector space.
			This is possible provided $f$ is smooth enough (Lemma \ref{lemma: lattice}).
			\item Let $d$ be the cardinality of $\mathsf{supp}(\widehat{F}_K)$, or alternatively, the number of prominent peaks in the spectrum of $f$. This choice   is so that $\SW_{d,\tau} f$ has the right toroidal dimension (Theorem \ref{thm: Structure}).
			\item Let $\tau >0 $ be a   minimizer over $[0, \tau_\mathsf{max}]$ of the scalar function
			\[
			\Gamma(x) = \sum_{\kk \neq \kk'} \left\vert 1 +  e^{i x \langle \kk - \kk', \omega\rangle} + \cdots +  e^{i x \langle \kk - \kk', \omega\rangle d} \right\vert^2
			\] where the sum runs over $\kk,\kk' \in \mathsf{supp}(\widehat{F}_K)$. This choice is meant to amplify the toroidal features in $\dgm^\mathcal{R}_j(\SW_{d,\tau} f)$---see Figures \ref{fig:dgmsDiffTaus} and \ref{fig:PersVsTau}---and can be implemented via simple minimization algorithms.
		\end{enumerate}
	}
}
\\[.2cm]
Here, by toroidal features, we refer to the torus shaped attractors in the underlying dynamical system, which are captured by computing the persistent homology of the sliding window point cloud. By strong $N$-toroidal features, we mean that there are $N \choose j$ number of significant persistence points in $\dgm^\mathcal{R}_j$. 

Our third contribution leverages the aforementioned
approximation strategy,  parameter choices, and a recent persistent K\"unneth formula \cite{Kunneth},
to establish  bounds
for the cardinality and persistence of strong toroidal features
in $\dgm_j^\mathcal{R}(\SW_{d,\tau} f)$, for $1\leq j \leq N$.
We prove the following (Section \ref{section: persistent homology}):

\begin{theorem*}
	With $K,d\in \N$ and $\tau> 0$ as before, let $\sigma_{\mathsf{min}} > 0$ be the smallest
	singular value of the $(d+1)\times d$ Vandermonde matrix
	$\left[e^{i\<\kk,\omega\>\tau j}\right]_{ j =0,\ldots, d, \; \kk \in \mathsf{supp}(\widehat{F}_K)}$.
	Moreover,
	let $\kk_1,\ldots, \kk_N \in \mathsf{supp}(\widehat{F}_K)$ be $\Q$-linearly independent with
	\[
	\vert\widehat{F}(\kk_1)\vert \geq \vert\widehat{F}(\kk_2)\vert\geq \cdots \geq
	\vert\widehat{F}(\kk_N)\vert > 0.
	\]
	For  $1\leq n \leq N$, let $1\leq n_1 < \cdots < n_\ell \leq N$ be the longest sequence (i.e., largest $\ell$) for which
	\[
	\vert\widehat{F}(\kk_n)\vert =  \vert\widehat{F}(\kk_{n_1})\vert = \cdots = \vert\widehat{F}(\kk_{n_\ell})\vert
	\]
	and for  $1\leq j \leq N$, let
	\[
	\mu_j(n) :=
	{n_1  -1 \choose j - 1} + \cdots + {n_\ell -1  \choose j-1}.
	\]
	Then, there are at least $\mu_j(1) +\cdots +\mu_j(n)$
	toroidal features
	$(a,b) \in \dgm^\mathcal{R}_j\left( \SW_{d,\tau} f \right)$, counted with multiplicity, and
	with persistence
	\begin{equation}\label{eqn: LowerBound}
		b-a \;\;\geq \;\; \sqrt{3}\vert\widehat{F}(\kk_n)\vert\sigma_{\mathsf{min}}
		\;-\;
		4\sqrt{d+1}\|f - S_K f\|_\infty.
	\end{equation}
	
\end{theorem*}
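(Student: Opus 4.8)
The plan is to prove the estimate first with the truncation $S_K f$ in place of $f$ and with only the term $\sqrt{3}\,|\widehat{F}(\kk_n)|\,\sigma_{\mathsf{min}}$ on the right, and then transfer to $f$ by a stability argument; the transfer is exactly what produces the correction $-4\sqrt{d+1}\,\|f - S_K f\|_\infty$. Since each coordinate of $SW_{d,\tau}f(t) - SW_{d,\tau}S_K f(t)$ has modulus at most $\|f - S_K f\|_\infty$, one has $\|SW_{d,\tau}f(t) - SW_{d,\tau}S_K f(t)\|_2 \le \sqrt{d+1}\,\|f - S_K f\|_\infty$ for every $t$, so $\SW_{d,\tau}f$ and $\SW_{d,\tau}S_K f$ lie within Hausdorff distance $\sqrt{d+1}\,\|f - S_K f\|_\infty$. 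Stability of Rips persistence then bounds the bottleneck distance $d_{\mathrm b}\bigl(\dgm_j^{\mathcal{R}}(\SW_{d,\tau}f),\,\dgm_j^{\mathcal{R}}(\SW_{d,\tau}S_K f)\bigr)$ by $2\sqrt{d+1}\,\|f - S_K f\|_\infty$, so every feature of $\dgm_j^{\mathcal{R}}(\SW_{d,\tau}S_K f)$ of persistence $p$ is matched, injectively and off the diagonal as soon as the right-hand side of \eqref{eqn: LowerBound} is positive, to a feature of $\dgm_j^{\mathcal{R}}(\SW_{d,\tau}f)$ of persistence at least $p - 4\sqrt{d+1}\,\|f - S_K f\|_\infty$. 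It therefore suffices to show that $\dgm_j^{\mathcal{R}}(\SW_{d,\tau}S_K f)$ has at least $\mu_j(1) + \cdots + \mu_j(n)$ features, counted with multiplicity, of persistence at least $\sqrt{3}\,|\widehat{F}(\kk_n)|\,\sigma_{\mathsf{min}}$.

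\textbf{Structure of the truncated point cloud.} For $S_K f$ I would exploit the explicit form of the point cloud, extending the analysis of \cite{toroidal, sw1pers}. Writing $\uu_\kk = (1, e^{i\<\kk,\omega\>\tau}, \dots, e^{i\<\kk,\omega\>\tau d})^{\!\top}$ for $\kk \in \mathsf{supp}(\widehat{F}_K)$ and $U$ for the $(d+1)\times d$ Vandermonde matrix of the statement (whose columns are the $\uu_\kk$), one has $SW_{d,\tau}S_K f(t) = U\bigl(\widehat{F}(\kk)\,e^{i\<\kk,t\omega\>}\bigr)_\kk$. Hence $\SW_{d,\tau}S_K f$ --- equivalently, since a dense subset lies at Gromov--Hausdorff distance $0$ and so has the same Rips diagrams, its closure --- equals $U\mathbb X$, where $\mathbb X \subset \C^d$ is the flat $N$-torus obtained from the orbit closure $\overline{\{(e^{i\<\kk,t\omega\>})_\kk : t \in \R\}}$ by scaling its $\kk$-th coordinate circle by $|\widehat{F}(\kk)|$; that this closure is an $N$-torus is the content of Theorem \ref{thm: Structure}, which I would invoke. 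Incommensurability of $\omega$ makes the nodes $e^{i\<\kk,\omega\>\tau}$ pairwise distinct, so $U$ has full column rank, $\sigma_{\mathsf{min}} > 0$, and $\sigma_{\mathsf{min}}\|v\|_2 \le \|Uv\|_2 \le \sigma_{\mathsf{max}}\|v\|_2$; thus $U$ restricts to a bi-Lipschitz map $(\mathbb X, \|\cdot\|_2) \to U\mathbb X$ with constants $\sigma_{\mathsf{min}}$ and $\sigma_{\mathsf{max}}$.

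\textbf{The K\"{u}nneth input and the rank transfer.} Since $\kk_1, \dots, \kk_N$ are $\Q$-linearly independent, projecting $\mathbb X$ onto the coordinates they index gives a $1$-Lipschitz surjection $\pi \colon \mathbb X \to P := \prod_{n=1}^N S^1\bigl(|\widehat{F}(\kk_n)|\bigr)$ onto a genuine product of circles with the $\ell^\infty$ metric, and $\pi$ is a finite covering of tori, so for $\epsilon$ small the induced map $H_j(R_\epsilon(\mathbb X)) \to H_j(R_\epsilon(P))$ is (by Hausmann's theorem for $\mathbb X$ and the Adamaszek--Adams description of $\mathcal{R}(S^1)$ for $P$) the map $\pi_\ast \colon H_j(\mathbb X) \to H_j(P)$, which is surjective over a field whose characteristic is coprime to the covering degree. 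Because the Rips complex of an $\ell^\infty$-product is the product of Rips complexes, \cite{Kunneth} computes $PH_j(\mathcal{R}(P))$ as the degree-$j$ part of $\bigotimes_n PH_\ast\bigl(\mathcal{R}(S^1(|\widehat{F}(\kk_n)|))\bigr)$; since a Euclidean circle of radius $r$ has $H_1$-bar $[0, \sqrt{3}\,r)$, this module contains, for each $j$-subset $\{i_1 < \cdots < i_j\} \subseteq \{1, \dots, N\}$, a bar $[0, \sqrt{3}\,|\widehat{F}(\kk_{i_j})|)$. Now fix a small $\epsilon_0 > 0$ and an $\epsilon_1$ with $\sigma_{\mathsf{max}}\epsilon_0 < \epsilon_1 < \sqrt{3}\,|\widehat{F}(\kk_n)|\,\sigma_{\mathsf{min}}$; the composite of Rips maps
\[
R_{\epsilon_0}(\mathbb X)\ \xrightarrow{\ U\ }\ R_{\sigma_{\mathsf{max}}\epsilon_0}(U\mathbb X)\ \hookrightarrow\ R_{\epsilon_1}(U\mathbb X)\ \xrightarrow{\ U^{-1}\ }\ R_{\epsilon_1/\sigma_{\mathsf{min}}}(\mathbb X)\ \xrightarrow{\ \pi\ }\ R_{\epsilon_1/\sigma_{\mathsf{min}}}(P)
\]
is, on the first three arrows, the Rips inclusion $R_{\epsilon_0}(\mathbb X)\hookrightarrow R_{\epsilon_1/\sigma_{\mathsf{min}}}(\mathbb X)$, so applying $H_j$ and using $\operatorname{rank}(AB)\le\operatorname{rank}A$ on both sides together with the surjectivity of $\pi_\ast$ at scale $\epsilon_0$ yields
\[
\operatorname{rank}\!\Bigl(H_j\bigl(R_{\sigma_{\mathsf{max}}\epsilon_0}(U\mathbb X)\bigr)\to H_j\bigl(R_{\epsilon_1}(U\mathbb X)\bigr)\Bigr)\ \ge\ \operatorname{rank}\!\Bigl(H_j\bigl(R_{\epsilon_0}(P)\bigr)\to H_j\bigl(R_{\epsilon_1/\sigma_{\mathsf{min}}}(P)\bigr)\Bigr),
\]
and the right-hand side is the number of K\"{u}nneth bars of $PH_j(\mathcal{R}(P))$ containing $[\epsilon_0, \epsilon_1/\sigma_{\mathsf{min}}]$, i.e.\ the number of $j$-subsets whose largest index $m$ satisfies $|\widehat{F}(\kk_m)| \ge |\widehat{F}(\kk_n)|$. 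Letting $\epsilon_1 \uparrow \sqrt{3}\,|\widehat{F}(\kk_n)|\,\sigma_{\mathsf{min}}$ and $\epsilon_0 \downarrow 0$, and using that $\dgm_j^{\mathcal{R}}(U\mathbb X)$ has only finitely many bars above any positive threshold, this many bars of $\dgm_j^{\mathcal{R}}(U\mathbb X)$ are born at $0$ and die no earlier than $\sqrt{3}\,|\widehat{F}(\kk_n)|\,\sigma_{\mathsf{min}}$; a hockey-stick identity, grouping indices with equal $|\widehat{F}(\kk_m)|$, rewrites their count as $\mu_j(1) + \cdots + \mu_j(n)$.

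\textbf{Main obstacle.} The step I expect to be the main obstacle is this middle transfer: the genuine product $P$ is reached from $U\mathbb X$ only through the many-to-one projection $\pi$ --- which has no continuous section --- composed with the metric distortion by $U$, so the K\"{u}nneth formula cannot be applied to $U\mathbb X$ directly and the whole argument must be run through ranks of structure maps of persistence modules. The points needing care are the functorial form of Hausmann's theorem (together with the Adamaszek--Adams computation) identifying the small-scale Rips map with $\pi_\ast$ and giving its surjectivity over a suitable coefficient field, and the limiting argument converting the strict inequalities in $\epsilon_0, \epsilon_1$ into the closed bound \eqref{eqn: LowerBound}. The remaining points --- identifying the produced classes as ``toroidal'', the binomial bookkeeping behind $\mu_j(1) + \cdots + \mu_j(n)$, and the stability matching of the first paragraph --- are routine.
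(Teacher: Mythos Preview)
Your overall strategy matches the paper's almost exactly: the stability reduction to $S_K f$ (Corollary~\ref{Corollary: DGMapproximation}), the Vandermonde factorization $SW_{d,\tau}S_K f = \Omega_{K,f}\cdot x_{K,f}$, the $1$-Lipschitz projection onto the product torus $(\T^N_{\widehat{F}},\|\cdot\|_\infty)$, the K\"unneth/Adamaszek--Adams computation of its Rips barcode (Lemma~\ref{lemma:PersTorus}), and the rank interleaving via $\sigma_{\min},\sigma_{\max}$ (the commutative diagram preceding Theorem~\ref{theorem: longbarsSW}) are all exactly as in the paper, and they assemble into the same constants.

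The one substantive local difference is how you prove surjectivity of $\pi_*$ on Rips homology at small scales (the paper's Lemma~\ref{lemma:SurjectiveP}). You treat $\pi:\overline{\X_{K,f}}\to\T^N_{\widehat{F}}$ as a finite covering of tori and plan to combine a functorial Hausmann theorem with the transfer map. The paper instead argues that $\pi$ is a \emph{homeomorphism}, uses uniform continuity of $\pi^{-1}$ to build an approximate section $\nu:\T^N_{\widehat{F}}\to\X_{K,f}$, and shows by a direct contiguity argument that $R_\epsilon(\pi)\circ R(\nu)$ equals the Rips inclusion on $\T^N_{\widehat{F}}$ in homology---already known surjective by Corollary~\ref{Corollary: SurjectiveTorus}. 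The paper's route is entirely elementary (no Hausmann, no transfer) and yields surjectivity at every scale $\epsilon<\sqrt{3}\,|\widehat{F}(\kk_N)|$, not just asymptotically small ones. On the other hand, your covering formulation is the more robust picture: injectivity of $\pi$ on the dense orbit $\X_{K,f}$ need not extend to the closure when $\kk_1,\ldots,\kk_N$ are merely $\Q$-independent rather than a $\Z$-basis for the lattice spanned by $\mathsf{supp}(\widehat{F}_K)$, and your transfer argument accommodates that case at the price of the functorial Hausmann step you correctly flag as the main technical point.
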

We envision for these kinds of theorems to be useful in separating
noise from features in applications of  quasiperiodicity detection/quantification
with sliding windows and persistence.
As an illustration, consider the quasiperiodic signal
\[
f(t) =  2\sin(t) + 1.8\sin\left(\sqrt{3}t\right)
\]
shown in Figure \ref{fig:BoundFig} below (left), together with $\dgm_j^\mathcal{R}(\SW_{d,\tau} f)$ (right) in dimensions $j=1$ (blue) and $j=2$ (orange), computed with appropriate parameters $d,\tau$. The corresponding  theoretical lower bounds on persistence from Eq. (\ref{eqn: LowerBound}) are  depicted with dashed lines. We also use $f(t)$ to demonstrate the effect of random noise on sliding window persistence. See Section \ref{subsection: example3freq} for computational details.
\begin{figure}[!htb]
	\centering
	\includegraphics[width=\textwidth]{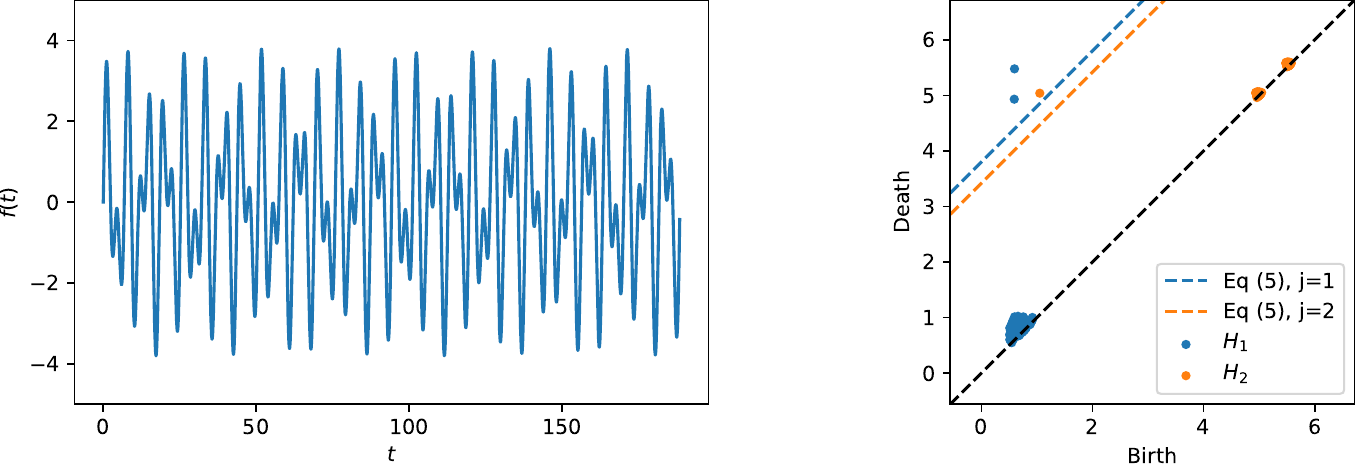}
	\caption{Left: the function $f(t) = 2\sin(t) + 1.8\sin\left(\sqrt{3}t\right)$. Right: The persistence diagrams in dimensions $j=1$ (blue) and $j=2$ (orange) of the sliding window point cloud $\SW_{d,\tau} f$, together with the lower bounds on persistence (dashed lines) from Eq. (\ref{eqn: LowerBound}).}
	\label{fig:BoundFig}
\end{figure}

Finally, as an application of the theory established in this paper,
we illustrate how sliding windows and persistence can be used to
identify the presence of dissonance in music audio recordings.
Indeed, dissonant intervals in music (like \emph{the tritone}) lead to quasiperiodic
recurrence in the recorded audio waves, which we show can be effectively detected with the methods developed here. We also note that the preliminary versions of these results  appeared in the first author's Ph.D. thesis \cite{gakhar2020topological}.

\subsection{Organization}
Section \ref{sec:background} presents the necessary mathematical background  on the Fourier theory of quasiperiodic functions and persistent homology.
Section \ref{section:fourierseries}  provides Fourier-type approximation theorems with explicit rates, at the level of  sliding window point clouds and Rips persistent homology.
Section \ref{section: structuretheorem} is devoted
to studying the geometric structure of the sliding window point cloud $\SW_{d,\tau} S_K f$ as a function of the parameters involved.
In Section \ref{section: parameterselection}, we establish a computational framework for the optimization of $d$ and $\tau$.
Section \ref{section: persistent homology} establishes the advertised theoretical lower bounds on $\dgm_j^{\mathcal{R}}(\SW_{d,\tau} f)$, and
we end in Section \ref{section: applications} with computational examples and applications.

\subsection{Notation}\label{notation}
Let $\T = \mathbb{R}/2\pi\mathbb{Z} \cong S^1$, that is, $[0,2\pi]$ with the endpoints identified.
Similarly for $N \in \N$, let $\T^N = \left(\mathbb{R}/2\pi\mathbb{Z}\right)^N \cong  S^1 \times \cdots \times S^1$.
We will abuse notation and regard any $F: \T^N \longrightarrow \C$ as both a function of a variable  $\tt \in \R^N$ where each coordinate $t_n$ is $2\pi$-periodic, and also as a function on the quotient $\T^N$.

\section{Preliminaries}\label{sec:background}
In this section we establish the necessary background for later parts of the paper.
In particular, we provide  a short review of Kronecker's multidimensional approximation theorem in Section \ref{subsection:kronecker}, as well as of quasiperiodic functions and their Fourier theory in  Section \ref{section:quasiperiodic}.
The theory of persistent homology is briefly discussed in Section \ref{section: Background persHom}.

\subsection{Kronecker's theorem}\label{subsection:kronecker}
If we regard $\R$  as a  vector space over $\Q$, then
a finite set $\lbrace \beta_1 , \dots, \beta_N \rbrace \subset \mathbb{R} $   is called \textit{incommensurate} if $\beta_1 , \dots,  \beta_N $ are linearly independent over $\Q$. That is, for $c_1, \dots, c_N \in \Q$
\[
c_1 \beta_1 + \cdots + c_N \beta_{N} = 0 \;\;\; \mbox{ if and only if  } \;\;\; c_1 = \cdots = c_N = 0.
\]
Kronecker's theorem is concerned with simultaneous diophantine approximations to real numbers, and incommensurability turns out to be the main condition.
Later on we will use this theorem to see why sliding window point clouds from   functions with incommensurate frequencies (i.e., quasiperiodic) are dense in high-dimensional tori.
For now, the theorem can be stated as follows \cite[Chapter 7]{apostol2012modular}.

\begin{theorem}[Kronecker]\label{Kroneckermulti}
	$\lbrace \beta_1 , \dots, \beta_N \rbrace \subset \R$ is incommensurate if and only if
	for every $r_1,\dots, r_N \in \R$ and every  $\epsilon > 0$,
	there exist $t \in \R$ and $k_1, \dots, k_N \in \Z$ so that
	\begin{equation}\label{eqn: kroneckerconditionbasic}
		\vert t\beta_n - r_n - 2\pi k_n \vert < \epsilon \hspace{1cm}\mbox{ for all }\hspace{1cm} 1 \leq n \leq N.
	\end{equation}	
	As a consequence, the entries of $ \beta = (\beta_1, \dots, \beta_{N}) \in \R^N$ are incommensurate  if and only if  \,
	$\R\beta / 2\pi \Z^N :=
	\big\{t(\beta_1,\ldots, \beta_N) \mod 2\pi \mid t\in \R
	\big\}$   is dense in  $ \T^N$.
\end{theorem}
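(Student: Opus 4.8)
The plan is to prove the stated biconditional in two directions and then observe that the final ``consequence'' is merely a restatement: condition (\ref{eqn: kroneckerconditionbasic}) holding for every $r \in \R^N$ and every $\epsilon > 0$ is precisely the assertion that $\R\beta/2\pi\Z^N$ is dense in $\T^N$ for the quotient metric, since asking for $|t\beta_n - r_n - 2\pi k_n| < \epsilon$ for all $n$ with suitable $k_n \in \Z$ is exactly asking that $t\beta \bmod 2\pi$ lie within $\epsilon$ of $r \bmod 2\pi$ in $\T^N$. So it suffices to prove: $\beta = (\beta_1,\ldots,\beta_N)$ is incommensurate if and only if the orbit $\R\beta/2\pi\Z^N$ is dense in $\T^N$.

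\emph{Necessity} (dense orbit $\Rightarrow$ incommensurate). I would argue by contraposition. Suppose $c_1\beta_1 + \cdots + c_N\beta_N = 0$ with $c_n \in \Q$ not all zero; clearing denominators yields $m = (m_1,\ldots,m_N) \in \Z^N \setminus \{0\}$ with $\langle m, \beta\rangle = 0$. The character $\chi_m \colon \T^N \to S^1$, $\chi_m(x) = e^{i\langle m, x\rangle}$, is continuous, nontrivial (as $m\neq 0$), and satisfies $\chi_m(t\beta \bmod 2\pi) = e^{it\langle m,\beta\rangle} = 1$ for all $t \in \R$. Hence the orbit lies in the proper closed subgroup $\ker\chi_m \subsetneq \T^N$ and cannot be dense, so (\ref{eqn: kroneckerconditionbasic}) fails for any $r$ outside $\ker\chi_m$ once $\epsilon$ is small.

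\emph{Sufficiency} (incommensurate $\Rightarrow$ dense orbit). Assume $\beta$ is incommensurate and set $H = \overline{\R\beta/2\pi\Z^N}$. Since $t \mapsto t\beta \bmod 2\pi$ is a continuous homomorphism $\R \to \T^N$, its image is a subgroup, so $H$ is a closed subgroup of the compact abelian group $\T^N$. Suppose, for contradiction, that $H \neq \T^N$. Then $\T^N / H$ is a nontrivial compact Hausdorff abelian group, hence (by Pontryagin duality, equivalently Peter--Weyl in the abelian case) carries a nontrivial continuous character; pulling it back along $\T^N \to \T^N/H$ gives a nontrivial continuous character $\psi$ of $\T^N$ that is trivial on $H$. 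Every continuous character of $\T^N$ is of the form $\psi(x) = e^{i\langle m,x\rangle}$ for a unique $m \in \Z^N$, with $m \neq 0$ by nontriviality. Since $\psi$ vanishes on the orbit, $e^{it\langle m,\beta\rangle} = 1$ for all $t$, forcing $\langle m,\beta\rangle = \sum_n m_n\beta_n = 0$, contradicting incommensurability of $\beta$ (as $m \in \Z^N\setminus\{0\}$ is in particular a nonzero rational vector). Therefore $H = \T^N$. Alternatively, I could bypass duality via Weyl's equidistribution criterion: for each $m \in \Z^N\setminus\{0\}$ one has $\frac{1}{T}\int_0^T e^{it\langle m,\beta\rangle}\,dt = \frac{e^{iT\langle m,\beta\rangle}-1}{iT\langle m,\beta\rangle} \to 0$ as $T \to \infty$ because $\langle m,\beta\rangle \neq 0$, so by Weyl's theorem $t \mapsto t\beta$ is equidistributed modulo $2\pi\Z^N$, and equidistribution implies density.

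\emph{Main obstacle.} The substantive content lies in sufficiency, specifically in the fact that a \emph{proper} closed subgroup of $\T^N$ must be contained in the kernel of a nontrivial character (the Pontryagin/structure-theory input above) or, on the Weyl route, in the passage from the vanishing of the exponential integrals to equidistribution and hence density. Everything else---clearing denominators, identifying $\widehat{\T^N}$ with $\Z^N$, and translating ``dense orbit'' into the coordinatewise inequalities (\ref{eqn: kroneckerconditionbasic})---is routine.
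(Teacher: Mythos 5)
The paper does not prove this theorem; it is quoted from Apostol \cite[Chapter 7]{apostol2012modular}, so there is no in-paper argument to compare against. That said, your proof is correct. You rightly observe that the displayed $\epsilon$-inequalities, quantified over all $r_1,\dots,r_N$ and all $\epsilon>0$, are precisely the statement that the one-parameter subgroup $\R\beta/2\pi\Z^N$ is dense in $\T^N$, so the ``consequence'' and the main biconditional are the same assertion and it suffices to prove: $\beta$ incommensurate if and only if the orbit is dense. Your necessity direction (clear denominators to get $m\in\Z^N\setminus\{0\}$ with $\langle m,\beta\rangle=0$ and trap the orbit in the proper closed subgroup $\ker e^{i\langle m,\cdot\rangle}$) is clean, and your sufficiency direction (the orbit closure $H$ is a closed subgroup; if proper, $\T^N/H$ carries a nontrivial character, pulled back to $e^{i\langle m,\cdot\rangle}$ with $m\neq 0$, forcing $\langle m,\beta\rangle=0$ and contradicting incommensurability) is also sound, as is the alternative via Weyl's equidistribution criterion. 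For context, the proof implicitly referenced by the citation is Bohr's elementary inductive argument using products of geometric sums, which avoids both Pontryagin duality and equidistribution; your structural route is shorter and cleaner but presupposes the classification of continuous characters of $\T^N$ and the duality fact that a proper closed subgroup of a compact abelian group is annihilated by some nontrivial character. Either approach is acceptable, and there are no gaps in yours.
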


\begin{remark}\label{rmk: IntegerKronecker}
	If one further assumes that $\{\pi ,\beta_1,\ldots, \beta_N\}$ is incommensurate,
	then Eq. (\ref{eqn: kroneckerconditionbasic}) holds with $t\in \Z$.
\end{remark}

Here is a useful consequence of Kronecker's theorem:
\begin{corollary}\label{Corollary: Kronecker}
	Let $\beta = (\beta_1, \dots, \beta_{\alpha})\in \R^\alpha$, for $\alpha \in \N$.
	Then $\mathsf{span}_\Q\{\beta_1,\ldots, \beta_\alpha\}$ has dimension $N$ over $\Q$,  if and only if
	\,$\R\beta/2\pi \Z^\alpha \subset \T^\alpha$ is dense in an $N$-torus
	embedded in $\mathbb{T}^{\alpha}$.
\end{corollary}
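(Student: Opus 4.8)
The plan is to reduce everything to Kronecker's theorem (Theorem~\ref{Kroneckermulti}) by passing to the smallest \emph{rational} subspace of $\R^\alpha$ containing $\beta$, where a subspace is called rational if it admits a basis of vectors in $\Q^\alpha$. Concretely, I would let $\pi\colon \R^\alpha \to \T^\alpha$ be the quotient map and let $V$ be the intersection of all rational subspaces of $\R^\alpha$ that contain $\beta$; since rational subspaces are cut out by rational linear systems, this class is closed under intersection, so $V$ is itself rational and is the unique minimal rational subspace through $\beta$. The whole corollary then follows once I establish two facts: (i) $\dim_\R V = \dim_\Q \mathsf{span}_\Q\{\beta_1,\ldots,\beta_\alpha\} =: N$, and (ii) $\overline{\R\beta/2\pi\Z^\alpha} = \pi(V)$, which is an $N$-torus embedded in $\T^\alpha$. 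Indeed, $(\Rightarrow)$ is then immediate, and for $(\Leftarrow)$, if the orbit is dense in some embedded $m$-torus $T$, then $T$ is compact hence closed, so $T = \overline{\R\beta/2\pi\Z^\alpha} = \pi(V)$ by (ii), forcing $m = \dim_\R V = N$ by (i).

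For (i), the inequality $\dim_\R V \le N$ comes from choosing a $\Q$-basis $\gamma_1,\ldots,\gamma_N$ of $\mathsf{span}_\Q\{\beta_i\}$ and writing $\beta_i = \sum_j q_{ij}\gamma_j$ with $q_{ij}\in\Q$; then $\beta = \sum_{j=1}^N \gamma_j\, w_j$ with $w_j = (q_{1j},\ldots,q_{\alpha j})\in\Q^\alpha$, so $\beta$ lies in the rational subspace $\mathsf{span}_\R\{w_1,\ldots,w_N\}$, whence $V$ does too. The reverse inequality $\dim_\R V \ge N$ comes from writing a rational basis $v_1,\ldots,v_m$ of $V$ (with $v_k\in\Q^\alpha$, $m=\dim_\R V$) and $\beta = \sum_k \lambda_k v_k$; reading off coordinates gives $\beta_i \in \mathsf{span}_\Q\{\lambda_1,\ldots,\lambda_m\}$ for every $i$, so $N \le m$. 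For the first half of (ii), since $V$ is rational the sublattice $V\cap 2\pi\Z^\alpha = 2\pi(V\cap\Z^\alpha)$ has full rank $N$ in $V$, so $\pi(V)\cong V/(V\cap 2\pi\Z^\alpha)\cong \T^N$ is a compact subtorus, embedded in $\T^\alpha$ because it is the continuous injective image of a compact space; and $\R\beta\subseteq V$ gives $\R\beta/2\pi\Z^\alpha\subseteq \pi(V)$, a closed set, so $\overline{\R\beta/2\pi\Z^\alpha}\subseteq\pi(V)$.

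The substantive step is the reverse inclusion in (ii), i.e.\ the density of the orbit in $\pi(V)$. Here I would fix a $\Z$-basis $u_1,\ldots,u_N$ of the lattice $V\cap 2\pi\Z^\alpha$; this gives a torus isomorphism $\bar\theta\colon \T^N \xrightarrow{\ \sim\ } \pi(V)$ under which $t\beta \bmod 2\pi\Z^\alpha$ corresponds to $t\xi \bmod 2\pi\Z^N$, where $\xi=(\xi_1,\ldots,\xi_N)$ is defined by $\beta = \sum_j \xi_j u_j$. The point is that the entries of $\xi$ are incommensurate: an integer relation $\sum_j m_j\xi_j = 0$ with $(m_j)\neq 0$ would put $\beta$ in the proper rational subspace $\{\sum_j x_j u_j : \sum_j m_j x_j = 0\}\subsetneq V$, contradicting minimality of $V$. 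By the consequence clause of Theorem~\ref{Kroneckermulti}, $\R\xi/2\pi\Z^N$ is dense in $\T^N$, and applying the homeomorphism $\bar\theta$ shows $\R\beta/2\pi\Z^\alpha$ is dense in $\pi(V)$. The main obstacle to watch for is exactly this passage from $\T^\alpha$ down to $\T^N$: one cannot simply project onto ``$N$ of the coordinates,'' because the reparametrization that makes $\pi(V)$ a clean quotient torus must use a $\Z$-basis of the lattice $V\cap 2\pi\Z^\alpha$ rather than the ambient coordinate frame; getting this right is what links minimality of $V$ with incommensurability of $\xi$, and everything else is routine bookkeeping about lattices and rational subspaces.
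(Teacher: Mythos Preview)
Your argument is correct and takes a genuinely different route from the paper's. The paper proceeds concretely: after reordering so that $\beta_1,\ldots,\beta_N$ are a $\Q$-basis, it writes down explicit integer relations $k_j\beta_{N+j}=\sum_n k_{j,n}\beta_n$, builds coordinate-wise maps $\phi:\T^N\to\T^\alpha$ and $\psi:\T^\alpha\to\T^\alpha$ from these integers, splits into cases $N=1$ and $N\ge 2$, and checks by hand that $\psi$ is injective on the orbit closure so that Kronecker applied to $(\beta_1,\ldots,\beta_N)$ can be transported through $\phi$. Your approach replaces all of this with a single intrinsic object, the minimal rational subspace $V\ni\beta$, and identifies the orbit closure directly as the subtorus $\pi(V)$; the incommensurability needed for Kronecker then falls out of minimality of $V$ rather than from an ad hoc injectivity check. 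This is cleaner and avoids the case split, and it also makes the converse direction slightly more transparent (the closure \emph{is} $\pi(V)$, so any embedded torus it fills must have dimension $\dim V=N$). What the paper's version buys in exchange is an explicit parametrization $\phi:\T^N\hookrightarrow\T^\alpha$ of the embedded torus in terms of the original coordinates and the relation integers $k_{j,n}$, which is closer to the pictures in Figure~\ref{fig:Kronecker} and to how the embedded tori are used downstream.

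One small normalization point: with $u_1,\ldots,u_N$ a $\Z$-basis of $V\cap 2\pi\Z^\alpha$, the induced identification is $\R^N/\Z^N\xrightarrow{\ \sim\ }\pi(V)$, so $t\beta\bmod 2\pi\Z^\alpha$ corresponds to $t\xi\bmod\Z^N$ rather than $\bmod\,2\pi\Z^N$. Equivalently, take $u_j\in V\cap\Z^\alpha$ instead, and then your ``$t\xi\bmod 2\pi\Z^N$'' is exactly right. Either way the $\xi_j$ are incommensurate and Kronecker applies, so this does not affect the argument.
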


Before presenting the proof, and as an illustration of this Corollary,
Figure \ref{fig:Kronecker}   shows   what $T\beta/2\pi\Z^3$ looks like inside $\T^3$ for $T:= \left[-10^4 , 10^4\right]\cap  \Z $ and $\beta$ equals one of
\[
\left(\sqrt{2}, \sqrt{3}, 0\right), \; \left(\sqrt{2},\sqrt{3}, \sqrt{2} + \sqrt{3}\right), \; \left(\sqrt{2},\sqrt{3}, 3\sqrt{2} + 2\sqrt{3}\right), \; \left(\sqrt{2},\sqrt{3},\pi^2+1\right).
\]
\begin{figure}[!htb]
	\centering
	\hspace{-.2cm}
	\begin{subfigure}{0.24\textwidth}
		\centering
		\includegraphics[width = \textwidth]{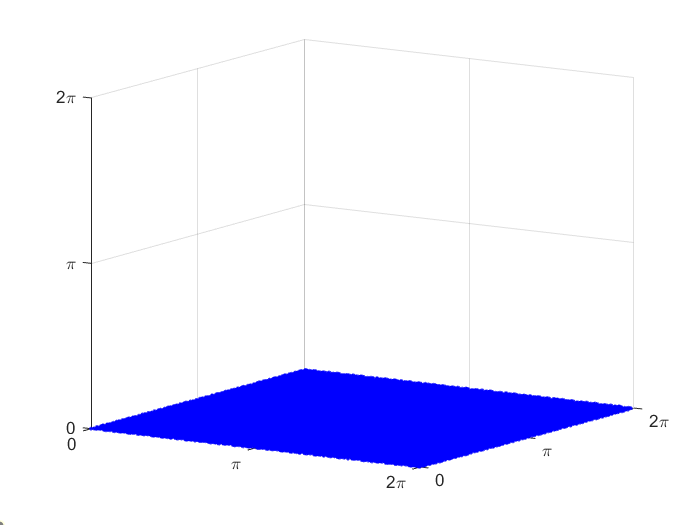}
	\end{subfigure}
	\begin{subfigure}{0.24\textwidth}
		\centering
		\includegraphics[width = \textwidth]{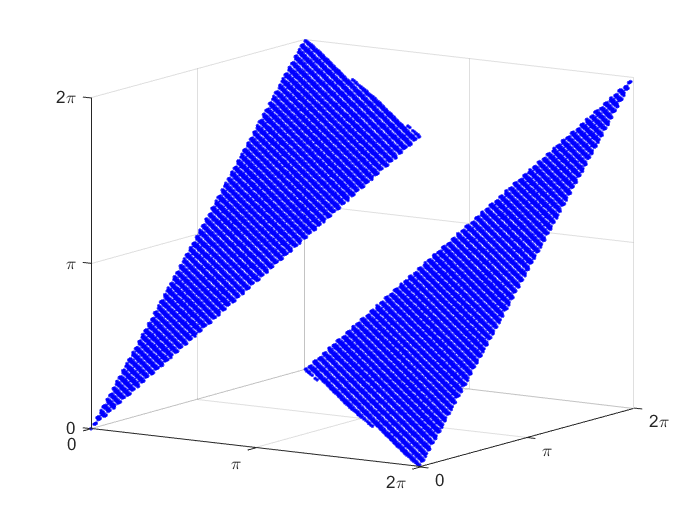}
	\end{subfigure}
	\begin{subfigure}{0.24\textwidth}
		\centering
		\includegraphics[width = \textwidth]{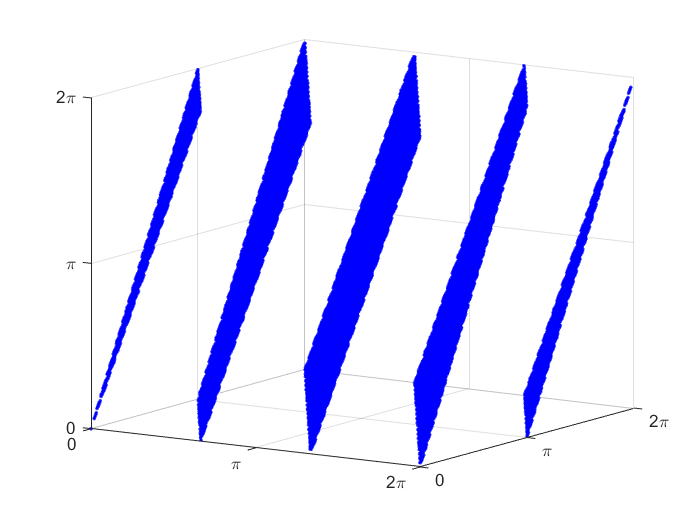}
	\end{subfigure}
	\begin{subfigure}{0.24\textwidth}
		\centering
		\includegraphics[width = \textwidth]{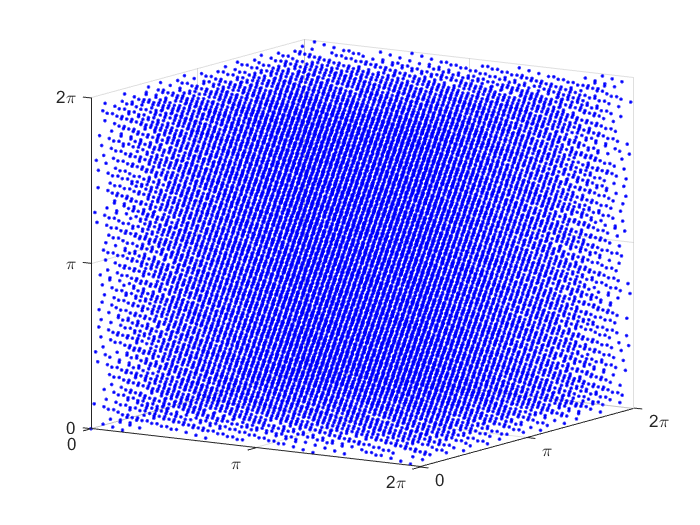}
	\end{subfigure}
	\caption
	{The set $T\beta/2\pi\Z^3 \subset \T^3$ for     $\beta=  (\sqrt{2},\sqrt{3}, 0)$ (left, a 2-torus), $\beta= (\sqrt{2},\sqrt{3}, \sqrt{2}+\sqrt{3})$ (second, a 2-torus), $ \beta = (\sqrt{2}, \sqrt{3} ,3\sqrt{2}+ 2\sqrt{3} )$ (third, a 2-torus), $\beta=(\sqrt{2},\sqrt{3}, \pi^2+1)$ (right, a 3-torus) and $T = \left[-10^4, 10^4\right]\cap \Z$.}
	\label{fig:Kronecker}
\end{figure}

\noindent Notice that in each case, $T\beta/2\pi \Z^3$ traces a torus of the appropriate dimension, but embedded in ways governed by the linear relations between the entries of $\beta$.
\begin{proof}[Proof (of Corollary \ref{Corollary: Kronecker})]
	Let $\lbrace \beta_1, \dots, \beta_{\alpha} \rbrace \subset \R$ span
	an $N$-dimensional vector space over $\mathbb{Q}$,
	and without loss of generality,
	assume that   $\beta_1, \dots, \beta_N$
	form a basis, $N < \alpha$.
	For $1 \leq j \leq \alpha -N$ let
	$k_j, k_{j, n} \in \Z$ be so that
	\begin{equation}\label{eqn: kbetas}
		k_j\beta_{N+j} = \sum_{n = 1}^N k_{j,n}  \beta_n
	\end{equation}
	If $N=1$ we further   require that $\mathsf{gcd}(k_j , k_{j,1}) = 1$, and if   $k = \mathsf{lcm}(k_1,\ldots, k_{\alpha -1})$,
	then it readily follows that
	\[
	\begin{array}{cccc}
		\T^1 & \longrightarrow & \T^\alpha  \\
		t & \mapsto & \frac{kt}{\beta_1}\beta
	\end{array}
	\]
	is an embedding of $\T^1$ into $\T^\alpha$ with the closure $\overline{\R\beta / 2\pi \Z^\alpha} =  \R\beta/2\pi \Z^\alpha$ as its image.
	
	Now, for $N\geq 2$ define $\phi:\R^N \longrightarrow \R^\alpha$ and $\psi : \R^\alpha \longrightarrow \R^\alpha$ as
	\begin{eqnarray}
		\phi(t_1,\ldots, t_N) &=&
		\left(t_1,\ldots, t_N, \sum\limits_{n = 1}^N k_{1,n} t_n \,, \ldots, \sum\limits_{n = 1}^N k_{\alpha - N,n}t_n\right) \\[.2cm]
		\psi(t_1,\ldots,t_\alpha) &=&
		\big(t_1, \ldots, t_N,\, k_1 t_{N+1}\,, \ldots, \,k_{\alpha -N} t_{\alpha}\big)
	\end{eqnarray}
	Note that both $\phi$ and $\psi$ preserve classes modulo $2\pi$, and therefore descend to continuous maps
	$\phi:\T^N \longrightarrow \T^\alpha$, $\psi : \T^\alpha \longrightarrow \T^\alpha$.
	Moreover, since   $\phi: \T^N \longrightarrow \T^\alpha$ is injective and tori are compact and Hausdorff, then $\phi$ is a homeomorphism onto its image.
	We claim that $\psi : \T^\alpha \longrightarrow \T^\alpha$ is injective when restricted to
	$\R\beta/2\pi\Z^\alpha$.
	Indeed, if $t\neq t'\in \R$ are so that $\psi(t\beta) = \psi(t'\beta)$ in $\T^\alpha$, then
	there exist $p,q\in \Z$ with
	\begin{eqnarray*}
		(t - t')\beta_1 &=& 2\pi p \\
		(t - t')\beta_2 &=& 2\pi q
	\end{eqnarray*}
	which implies $q\beta_1 = p\beta_2$, contradicting the $\Q$-linear independence of $\beta_1, \ldots,\beta_N$.
	It follows that $\psi$ is an injective continuous map on $\overline{\R\beta/2\pi\Z^\alpha}$, and  thus induces
	a homeomorphism
	$\overline{\R\beta/2\pi\Z^\alpha}
	\cong
	\psi\left(\overline{\R\beta/2\pi\Z^\alpha}\right)$.
	Finally, since  Eq. (\ref{eqn: kbetas})
	implies that   $\psi(t\beta) = \phi\big(t(\beta_1,\ldots, \beta_N)\big)$ for every $t\in \R$, and
	$\overline{\R(\beta_1,\ldots, \beta_N)/2\pi\Z^N}
	\cong \T^N $ by  Kronecker's theorem, then
	\[
	\overline{\R\beta/2\pi \Z^\alpha}
	\cong
	\psi\left(\overline{\R\beta/2\pi\Z^\alpha}\right)
	\cong
	\phi\left(\overline{\R(\beta_1,\ldots, \beta_N)/2\pi \Z^N}\right)
	=
	\phi\left(\T^N\right)
	\cong \T^N.
	\]
	The only if direction follows from the fact that two tori of different dimensions cannot be homeomorphic.
\end{proof}

\subsection{Quasiperiodic functions}\label{section:quasiperiodic}
As we alluded to in the introduction, quasiperiodic functions are
superpositions of periodic oscillators with incommensurate frequencies.
They arise in dynamical systems  as observation
functions on invariant toroidal submanifolds \cite{samoilenko2012elements}.
More specifically,
\begin{definition}\label{def:quasiperiodic}
	Let $\omega_1,\ldots, \omega_N > 0 $ be incommensurate.
	A function $f: \R \longrightarrow \C$ is said to be \emph{quasiperiodic} with frequency vector
	$\omega = (\omega_1,\ldots, \omega_N)$
	if
	\[
	f(t) = F(\omega_1 t, \ldots, \omega_N t)
	\]
	for all $t\in \R$ and  a  continuous function $F:\T^N\longrightarrow\C$.
	That is  $F\in C(\T^N )$,  which we will call a \emph{parent function} for $f$.
\end{definition}

\begin{remark}
	The case $N=1$ recovers the family of complex-valued periodic functions, and thus the results
	presented here generalize those of \cite{sw1pers}.
\end{remark}

\begin{remark}[\textbf{Important}]\label{rmk: MinmalFreqDim}
	We will require throughout that the dimension $N$ of the frequency vector $\omega \in \R^N$
	for $f$ quasiperiodic, be \emph{minimal}.
	The reason being that if not, then a function like $f(t) = e^{i(1 + \pi)t}$ can be regarded
	as being  quasiperiodic with frequency vector $\omega = 1 + \pi$ and parent function $F(t) = e^{it}$,
	or as having $\omega = (1,\pi)$ for frequency vector and $F(t_1,t_2) = e^{i(t_1 + t_2)}$ for parent function.
	Requiring that $N$ be minimal, and showing that for a given $\omega$ the parent function is unique
	(we will do so in Theorem \ref{thm: FourierQuasiCoeff} below), eliminates this ambiguity.
\end{remark}

It turns out that the traditional approximation theory via Fourier series on $L^2(\T^N)$ and $C(\T^N)$
can be leveraged to obtain similar insights for quasiperiodic functions.
We describe how in what follows (Theorem \ref{thm: UnifConvFourierQuasi}), though the interested reader should also consult
\cite{apostol2012modular,grafakos2008classical,samoilenko2012elements}.
Indeed, let  $\|(k_1,\ldots, k_N)\|_\infty = \max\limits_{1\leq n \leq N} \vert k_n \vert$,  and for $K\in \N$ let
$
I^N_K = \big\lbrace \kk  \in \Z^N \mid \|\kk \|_\infty \leq K \big\rbrace
$ be the integral square box of side $2K$.
The $K$-truncated Fourier polynomial of $F \in L^2\left(\T^N\right)$ is
the function
\begin{equation}\label{eqn: multifourier}
	S_KF(\tt ) = \sum_{\mathbf{k} \in I^N_K}\widehat{F}(\kk)e^{i\langle\kk,\tt \rangle}
\end{equation}
where $\tt  = (t_1, \dots, t_N) \in \R^N$, $\langle \cdot , \cdot \rangle$ is the standard inner product in $\mathbb{R}^N$, and
\begin{equation}\label{eqn: multifouriercoeffcients}
	\widehat{F}(\kk)
	\;\;=\;\;
	\frac{1}{(2\pi)^N} \int\limits^{2\pi}_{0}\cdots \int\limits^{2\pi}_{0}F(t_1,\dots,t_N) e^{-i \langle\kk,\tt\rangle} dt_1\cdots dt_N
	\;\;=\;\;
	\left\<F, e^{i\<\kk, \cdot \>}\right\>_{L^2}
\end{equation}
is the $\kk$-Fourier coefficient of $F$, for $\kk \in \Z^N$.
As it is well-known \cite[Proposition 3.2.7]{grafakos2008classical}, the sequence $\{S_K F\}_{K\in \N}$ converges to $F$  in $L^2(\T^N)$  as $K \to \infty$. That is,
\[
\lim_{K\to \infty} \|F - S_K F\|_{L^2} =0.
\]
It is not   the case, however, that one has pointwise convergence $S_K F(\tt) \to F(\tt)$, $\tt \in \T^N$, even for
$F\in C(\T^N)$ (see \cite[Proposition 3.4.6.]{grafakos2008classical} for negative results,
and Theorem \ref{thm: coefficientbounds} below
for appropriate conditions).

One can address these difficulties with \emph{approximate identities},
and in particular using the \emph{square Ces\`{a}ro (or Fej\'er) mean}
\begin{equation}\label{eqn: CesaroMean}
	C_K F (\tt) =
	\sum_{ \kk \in I_K^N} \left(1 - \frac{\vert k_1\vert}{K+1}\right)\cdots\left(1 - \frac{\vert k_N \vert}{K+1}\right)\widehat{F}(\kk)e^{i\<\kk,\tt\>}
\end{equation}
which for $F \in C(\T^N)$ satisfies
\begin{align}\label{eqn:Fejerstheorem}
	\lim_{K\to \infty} \|F - C_K F\|_\infty = 0
\end{align}
(this is Fej\'er's theorem) for $\|\cdot\|_\infty$  the sup-norm of uniform convergence in $C(\T^N)$.
We  now   state the first main result on the Fourier theory of quasiperiodic functions.
\begin{theorem}\label{thm: FourierQuasiCoeff}
	If $f: \R \longrightarrow \C$ is   quasiperiodic   with frequency vector $\omega \in \R^N$,
	then $f$ has a unique parent function $F \in C(\T^N)$ with  Fourier
	coefficients
	\begin{equation}\label{eqn: FourierQuasiCoeff}
		\widehat{F}(\kk) = \lim_{\lambda\to \infty} \frac{1}{\lambda} \int_{0}^\lambda f(t)e^{-i\<\kk, t\omega\>}dt.
	\end{equation}
\end{theorem}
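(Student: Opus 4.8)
The plan is to prove existence of a parent function with the stated Fourier coefficients, and then uniqueness, in two separate passes.

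For \emph{existence}, I would start from the definition: $f(t) = F(t\omega)$ for some $F \in C(\T^N)$. The goal is to show that the ordinary Fourier coefficients $\widehat{F}(\kk)$, defined by the $N$-fold integral over $\T^N$ in Eq.~(\ref{eqn: multifouriercoeffcients}), coincide with the time-average in Eq.~(\ref{eqn: FourierQuasiCoeff}). The key tool here is Kronecker's theorem (Theorem \ref{Kroneckermulti}): the orbit $\{t\omega \bmod 2\pi \mid t \in \R\}$ is dense and in fact \emph{equidistributed} in $\T^N$ because $\omega_1,\ldots,\omega_N$ are incommensurate. Concretely, I would invoke Weyl equidistribution: for every continuous $G \in C(\T^N)$,
\[
\lim_{\lambda \to \infty} \frac{1}{\lambda} \int_0^\lambda G(t\omega)\, dt = \frac{1}{(2\pi)^N}\int_{\T^N} G(\tt)\, d\tt.
\]
This is standard, but if one wants to stay self-contained it can be derived from Kronecker's theorem by first checking it on the trigonometric characters $G(\tt) = e^{i\<\kk,\tt\>}$ (where the left side is $0$ for $\kk \ne 0$ since $\<\kk,\omega\> \ne 0$ by incommensurability, and $1$ for $\kk = 0$, matching the right side), and then extending to all of $C(\T^N)$ by uniform approximation via Fej\'er's theorem, Eq.~(\ref{eqn:Fejerstheorem}). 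Applying the equidistribution identity to $G(\tt) = F(\tt) e^{-i\<\kk,\tt\>}$, which is continuous, yields
\[
\lim_{\lambda\to\infty} \frac{1}{\lambda}\int_0^\lambda f(t) e^{-i\<\kk,t\omega\>}\, dt
= \frac{1}{(2\pi)^N}\int_{\T^N} F(\tt) e^{-i\<\kk,\tt\>}\, d\tt = \widehat{F}(\kk),
\]
which is exactly Eq.~(\ref{eqn: FourierQuasiCoeff}).

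For \emph{uniqueness}, suppose $F_1, F_2 \in C(\T^N)$ are both parent functions for $f$, so $F_1(t\omega) = F_2(t\omega) = f(t)$ for all $t\in\R$. Then $G := F_1 - F_2 \in C(\T^N)$ vanishes on the orbit $\{t\omega \bmod 2\pi\}$, which by Kronecker's theorem is dense in $\T^N$; since $G$ is continuous and $\T^N$ is the closure of this set, $G \equiv 0$, hence $F_1 = F_2$. (Alternatively, one could argue via the coefficient formula: both $\widehat{F_1}(\kk)$ and $\widehat{F_2}(\kk)$ equal the same time-average, so all Fourier coefficients agree, and a continuous function on $\T^N$ is determined by its Fourier coefficients — but the density argument is cleaner.) Note this is where minimality of $N$ (Remark \ref{rmk: MinmalFreqDim}) is compatible: uniqueness is stated for a \emph{fixed} frequency vector $\omega$, and is exactly what resolves the ambiguity flagged there.

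The main obstacle is the equidistribution step: one must be careful that the limit defining the time-average exists (not merely along a subsequence), which is why I would route the argument through Weyl's criterion / uniform approximation by trigonometric polynomials rather than through bare density. Everything else — continuity of $G(\tt) = F(\tt)e^{-i\<\kk,\tt\>}$, the computation of the character integrals, and the density argument for uniqueness — is routine. A minor point worth stating explicitly in the write-up is that incommensurability of $\omega_1,\ldots,\omega_N$ guarantees $\<\kk,\omega\> \ne 0$ for every nonzero $\kk \in \Z^N$, which is precisely what makes the non-trivial characters average to zero.
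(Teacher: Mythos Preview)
Your proof is correct. For the Fourier-coefficient formula, both you and the paper rely on the same mechanism---uniform approximation by trigonometric polynomials (Fej\'er) together with the elementary fact that $\frac{1}{\lambda}\int_0^\lambda e^{i\<\kk'-\kk,\omega\>t}\,dt \to 0$ whenever $\kk'\neq\kk$ (since $\<\kk'-\kk,\omega\>\neq 0$ by incommensurability)---but you package this as Weyl equidistribution applied once to $G(\tt)=F(\tt)e^{-i\<\kk,\tt\>}$, whereas the paper works hands-on: it approximates $f$ by $C_Kf(t):=C_KF(t\omega)$, invokes the Moore--Osgood theorem to interchange $\lim_{\lambda\to\infty}$ with $\lim_{K\to\infty}$, and then evaluates the time-average of each finite trigonometric polynomial $C_Kf$ explicitly, obtaining the Ces\`aro-damped coefficient which tends to $\widehat{F}(\kk)$ as $K\to\infty$. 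Your version is more conceptual and reusable; the paper's is more explicit about the limit interchange. (One small remark: the self-contained derivation you sketch does not actually use Kronecker's density statement---only incommensurability of $\omega$---so it is really Weyl's criterion rather than a consequence of Theorem~\ref{Kroneckermulti}.)

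For uniqueness the two routes genuinely differ. The paper observes that any second parent $G\in C(\T^N)$ must have $\widehat{G}(\kk)=\widehat{F}(\kk)$ for all $\kk$ by the formula just established, hence $G=F$ almost everywhere in $L^2(\T^N)$, hence everywhere by continuity. Your primary argument---$F_1-F_2$ is continuous and vanishes on the dense orbit $\R\omega/2\pi\Z^N$, so is identically zero---is shorter and sidesteps the $L^2$ detour entirely; the Fourier-coefficient alternative you mention in passing is precisely what the paper uses.
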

\begin{proof}
	Write $f(t) = F(\omega t)$ for  $F\in C(\T^N)$.
	Since $\lim\limits_{K\to \infty}\|F - C_K F  \|_\infty =  0 $ (from Eq. (\ref{eqn:Fejerstheorem})), then   $C_K f(t) := C_KF(\omega t)$ converges
	to $f(t)$ uniformly in $t\in \R$.
	We claim that
	\[
	\frac{1}{\lambda} \int_{0}^\lambda C_K f(  t) e^{-i \<\kk , t \omega\>}dt
	\;\;\; \to \;\;\;
	\frac{1}{\lambda} \int_{0}^\lambda f(t) e^{-i \<\kk , t \omega\>} dt
	\]
	uniformly in $\lambda > 0$ as $K \to \infty$. Indeed,
	\[
	\left\vert\,
	\frac{1}{\lambda} \int_{0}^\lambda \big( C_K f( t)  - f(t) \big)e^{-i \<\kk , t \omega\>}dt
	\,\right\vert
	\;\leq \;\|C_K f  - f\|_\infty
	\]
	where the right hand side goes to zero as $K\to \infty$ independent of $\lambda$.
	Therefore, by the Moore-Osgood theorem,   we can exchange the order of limits as
	\begin{equation}\label{eqn: Integral}
		\lim_{\lambda\to \infty} \frac{1}{\lambda} \int_{0}^\lambda f(t)e^{-i\<\kk, t\omega\>}dt
		=
		\lim_{K \to \infty } \lim_{\lambda\to \infty} \frac{1}{\lambda} \int_{0}^\lambda C_Kf(t)e^{-i\<\kk, t\omega\>}dt
	\end{equation}
	and if
	\[
	\lambda_\kk = \left(1 - \frac{\vert k_1 \vert}{K+1}\right)\cdots\left(1 - \frac{\vert k_N \vert}{K+1}\right)\widehat{F}(\kk)
	\]
	are the coefficients of $C_KF$ (defined in Eq. (\ref{eqn: CesaroMean})), then
	evaluating  the right hand side of Eq. (\ref{eqn: Integral}) yields
	\begin{eqnarray*}
		\lim_{\lambda\to \infty} \frac{1}{\lambda} \int_{0}^\lambda f(t)e^{-i\<\kk, t\omega\>}dt
		&=&
		\lim_{K \to \infty } \lim_{\lambda\to \infty}
		\left(
		\lambda_\kk +
		\sum_{\kk' \in I_K^N \smallsetminus \{\kk\}}
		\lambda_{\kk'}\frac{e^{i\<\kk' - \kk, \lambda \omega\>} - 1 }{i\<\kk' - \kk, \omega\>\lambda}
		\right) \\[.2cm]
		&=& \lim_{K\to \infty} \left(1 - \frac{\vert k_1 \vert}{K+1}\right)\cdots\left(1 - \frac{\vert k_N \vert}{K+1}\right)\widehat{F}(\kk)\\[.2cm]
		&=& \widehat{F}(\kk).
	\end{eqnarray*}
	If there were another parent function $G\in C(\T^N)$---i.e., with $ f(t) = G(t\omega)$---then the above calculation shows that $\widehat{G}(\kk) = \widehat{F}(\kk)$ for every $\kk\in \Z^N$.
	Since functions in $L^2(\T^N)$ with the same Fourier coefficients are equal almost everywhere \cite[Proposition  3.2.7]{grafakos2008classical}, then
	continuity improves this to  functional equality   $G = F$.
\end{proof}

We now move onto providing conditions for the uniform convergence of
\begin{equation}\label{eqn: FourSeriesQuass}
	S_K f( t) := \sum_{\kk \in I_K^N} \widehat{F}(\kk)e^{i\<\kk,t\omega\>} \;\;\; , \;\;\; t\in \R \;\;\;, \;\;\; K \in \N
\end{equation}
as $K\to \infty$.
Here the $\widehat{F}(\kk)$ can be seen equivalently as the Fourier coefficients of the parent function $F$,
or as the result of evaluating  the right hand side of Eq. (\ref{eqn: FourierQuasiCoeff}).
The latter is what we expect to have access to in practice.
We start with an upper bound on the size of the coefficients $\widehat{F}(\mathbf{k})$    \cite[Theorem 3.3.9]{grafakos2008classical}.

\begin{prop}\label{prop: Fcoeffcientbound}
	Let $r \in \N$ and suppose that the partial derivatives $\partial^{\mathbf{l}}F$ exist and
	are continuous for all $\| \mathbf{l} \|_{1} = \vert l_1\vert  + \cdots + \vert l_N\vert  \leq r$.
	That is,   $F \in C^r(\T^N)$. Then
	\[
	\left\vert \widehat{F}(\mathbf{k}) \right\vert \leq \frac{\sqrt{N}^r}{ \| \mathbf{k} \|_{2}^r} \left\vert \widehat{\partial^r_nF}(\mathbf{k}) \right\vert
	\]
	where $n = n(\kk)$ satisfies
	$\vert k_n \vert = \| \mathbf{k} \|_{\infty}$ and
	${\partial^r_nF}$ is the $r$-th partial derivative of
	$F$ with respect to $t_n$.
\end{prop}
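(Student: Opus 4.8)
The plan is to bound $\widehat{F}(\kk)$ by integrating by parts $r$ times in a single well-chosen coordinate direction. Given $\kk\neq 0$, pick $n=n(\kk)$ with $|k_n|=\|\kk\|_\infty$; in particular $k_n\neq 0$, so the complex exponential $e^{-i\langle\kk,\tt\rangle}$ has a nonvanishing factor $e^{-ik_n t_n}$ which we can antidifferentiate. Concretely, starting from
\[
\widehat{F}(\kk)=\frac{1}{(2\pi)^N}\int_0^{2\pi}\!\!\cdots\!\int_0^{2\pi} F(\tt)\,e^{-i\langle\kk,\tt\rangle}\,dt_1\cdots dt_N,
\]
I would integrate by parts once in the $t_n$-variable, writing $e^{-ik_nt_n}=\frac{-1}{ik_n}\frac{d}{dt_n}e^{-ik_nt_n}$. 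The boundary term vanishes because $F$ and all its relevant derivatives are $2\pi$-periodic in each coordinate (so the evaluation at $t_n=0$ and $t_n=2\pi$ agree), leaving
\[
\widehat{F}(\kk)=\frac{1}{ik_n}\,\widehat{\partial_n F}(\kk).
\]

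Iterating this step $r$ times — legitimate since $\partial_n^j F$ is continuous for $j\le r$ by the hypothesis $F\in C^r(\T^N)$ (taking $\mathbf{l}=r\,e_n$, so $\|\mathbf{l}\|_1=r$), and each intermediate function is again $2\pi$-periodic in $t_n$ so every boundary term dies — yields
\[
\widehat{F}(\kk)=\frac{1}{(ik_n)^r}\,\widehat{\partial_n^r F}(\kk).
\]
Taking absolute values gives $|\widehat{F}(\kk)|=|k_n|^{-r}\,|\widehat{\partial_n^r F}(\kk)|$. Now I invoke the choice of $n$: since $|k_n|=\|\kk\|_\infty$, and for any vector in $\R^N$ one has $\|\kk\|_2\le\sqrt{N}\,\|\kk\|_\infty$, hence $\|\kk\|_\infty\ge\|\kk\|_2/\sqrt{N}$, so $|k_n|^{-r}\le(\sqrt{N})^r/\|\kk\|_2^r$. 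Combining,
\[
|\widehat{F}(\kk)|\le\frac{(\sqrt{N})^r}{\|\kk\|_2^r}\,\bigl|\widehat{\partial_n^r F}(\kk)\bigr|,
\]
which is the claimed inequality.

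The only genuinely delicate point is justifying the repeated integration by parts, i.e.\ confirming that all boundary contributions cancel. This is where the periodicity packaged in the notation $F:\T^N\to\C$ does the work: regarding $F$ as a function on the quotient $\T^N$ means $\partial_n^j F$ extends to a continuous (hence bounded) $2\pi$-periodic function on $\R$ in the $t_n$-slot for each $j\le r-1$, so $\bigl[\,\partial_n^j F(\tt)\,e^{-i\langle\kk,\tt\rangle}\,\bigr]_{t_n=0}^{t_n=2\pi}=0$ after also noting $e^{-ik_n\cdot 2\pi}=e^{-ik_n\cdot 0}$ since $k_n\in\Z$. Fubini's theorem (applicable because $F$ is continuous on the compact $\T^N$, hence bounded and integrable) lets me perform the $t_n$-integration first and treat the remaining variables as parameters throughout. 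Everything else is the routine $\ell^\infty$-versus-$\ell^2$ norm comparison. One caveat worth flagging: the statement as written does not explicitly exclude $\kk=0$, but the bound is vacuous there (or one simply restricts to $\kk\neq 0$, which is the only case of interest for decay estimates); I would note this in passing rather than belabor it.
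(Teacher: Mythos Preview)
Your argument is correct and is precisely the standard integration-by-parts proof. Note that the paper does not actually supply its own proof of this proposition; it simply cites \cite[Theorem 3.3.9]{grafakos2008classical}, whose argument is the one you have written.
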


These types of inequalities can be used to estimate the degree $r$ of regularity of the parent function $F$,
by inspecting the rate of decay of the coefficients $\left\vert\widehat{F}(\kk)\right\vert$.
Proposition \ref{prop: Fcoeffcientbound}  yields the following estimate for uniform approximation error.

\begin{theorem}\label{thm: coefficientbounds}
	If  $F\in C^r(\T^N)$ for $ \frac{N}{2}  <r \in \N$,  then
	\begin{equation}\label{eqn:absconvergenceoftail}
		\sum_{\mathbf{k} \notin I^N_K}
		\left\vert\widehat{F}(\mathbf{k})  \right\vert
		\leq
		\left(
		\frac{Area(\mathbb{S}^{N-1}) N^r}{K^{2r- N}(2r-N)}
		\sum_{n=1}^N
		\big\| \partial^r_nF  -  S_K \partial^r_nF \big\|^2_{L^2}
		\right)^{1/2}
	\end{equation}	where
	${\partial^r_nF}$ is the $r$-th partial derivative of
	$F$ with respect to $t_n$.
	As a result, the sequence of Fourier coefficients $\widehat{F}(\kk)$ is absolutely summable, i.e.
	\[
	\sum_{\mathbf{k} \in \Z^N} \big\vert  \widehat{F}(\mathbf{k})  \big\vert < \infty.
	\]
\end{theorem}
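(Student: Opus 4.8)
The plan is to combine Proposition~\ref{prop: Fcoeffcientbound} with the Cauchy--Schwarz inequality, Parseval's identity, and a tail estimate for $\sum_{\kk}\|\kk\|_2^{-2r}$. Concretely, for each $\kk\notin I^N_K$ choose $n=n(\kk)\in\{1,\dots,N\}$ with $|k_n|=\|\kk\|_\infty$ as in Proposition~\ref{prop: Fcoeffcientbound}, so that $|\widehat{F}(\kk)|\le \tfrac{\sqrt{N}^{\,r}}{\|\kk\|_2^{r}}\,\big|\widehat{\partial^r_nF}(\kk)\big|$. Summing over $\kk\notin I^N_K$ and applying Cauchy--Schwarz gives
\[
\sum_{\kk\notin I^N_K}|\widehat{F}(\kk)|
\;\le\;
\sqrt{N}^{\,r}\left(\sum_{\kk\notin I^N_K}\frac{1}{\|\kk\|_2^{2r}}\right)^{\!1/2}\left(\sum_{\kk\notin I^N_K}\big|\widehat{\partial^r_{n(\kk)}F}(\kk)\big|^2\right)^{\!1/2}.
\]
Since each $\kk$ enters the last sum through exactly one index $n(\kk)$, that factor is at most $\left(\sum_{n=1}^N\sum_{\kk\notin I^N_K}|\widehat{\partial^r_nF}(\kk)|^2\right)^{\!1/2}$; and because $F\in C^r(\T^N)$ forces $\partial^r_nF\in C(\T^N)\subset L^2(\T^N)$ with $S_K\partial^r_nF$ equal to its $K$-truncated Fourier polynomial, Parseval's identity yields $\sum_{\kk\notin I^N_K}|\widehat{\partial^r_nF}(\kk)|^2=\|\partial^r_nF-S_K\partial^r_nF\|_{L^2}^2$. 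So everything reduces to bounding $\sum_{\kk\notin I^N_K}\|\kk\|_2^{-2r}$.

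The heart of the proof is the inequality
\[
\sum_{\kk\notin I^N_K}\frac{1}{\|\kk\|_2^{2r}}\;\le\;\frac{\mathrm{Area}(\mathbb{S}^{N-1})}{(2r-N)\,K^{2r-N}},
\]
where the hypothesis $\tfrac{N}{2}<r$ enters to guarantee $2r-N>0$, hence convergence of both the series and the integral below. I would prove this by a lattice-sum-to-integral comparison: to each $\kk$ with $\|\kk\|_\infty>K$ assign the unit cube obtained from $\kk+[-\tfrac12,\tfrac12]^N$ by shifting each coordinate a half-unit toward the origin. These cubes are pairwise disjoint; each sits inside $\{x:\|x\|_\infty\ge K\}\subseteq\{x:\|x\|_2\ge K\}$, since the coordinate realizing $\|\kk\|_\infty\ge K+1$ stays $\ge K$ in absolute value on the cube; and on each cube $|x_n|\le|k_n|$ coordinatewise, whence $\|x\|_2\le\|\kk\|_2$ and $\|\kk\|_2^{-2r}\le\int_{\mathrm{cube}}\|x\|_2^{-2r}\,dx$. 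Summing over $\kk$ and then passing to polar coordinates,
\[
\sum_{\kk\notin I^N_K}\frac{1}{\|\kk\|_2^{2r}}\;\le\;\int_{\|x\|_2\ge K}\|x\|_2^{-2r}\,dx\;=\;\mathrm{Area}(\mathbb{S}^{N-1})\int_K^\infty\rho^{\,N-1-2r}\,d\rho\;=\;\frac{\mathrm{Area}(\mathbb{S}^{N-1})}{(2r-N)\,K^{2r-N}}.
\]
Substituting this into the Cauchy--Schwarz bound above yields Eq.~(\ref{eqn:absconvergenceoftail}).

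For the final assertion, apply the inequality just proved with $K=1$: the right-hand side is finite since each $\partial^r_nF\in L^2(\T^N)$, so $\sum_{\kk\notin I^N_1}|\widehat{F}(\kk)|<\infty$, and adding the finitely many terms with $\kk\in I^N_1$ (each finite because $F$ is continuous on the compact torus $\T^N$) shows $\sum_{\kk\in\Z^N}|\widehat{F}(\kk)|<\infty$.

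The step I expect to cost the most effort is the tail estimate --- specifically, pinning down the ``shift toward the origin'' for lattice points lying on coordinate hyperplanes so that the cubes remain disjoint, stay in $\{\|x\|_2\ge K\}$, and the comparison produces exactly the constant $\mathrm{Area}(\mathbb{S}^{N-1})$; the remainder is a routine assembly of Proposition~\ref{prop: Fcoeffcientbound}, Cauchy--Schwarz, and Parseval.
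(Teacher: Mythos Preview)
Your proposal is correct and follows essentially the same approach as the paper: Proposition~\ref{prop: Fcoeffcientbound}, Cauchy--Schwarz, Parseval, and the integral comparison $\sum_{\kk\notin I^N_K}\|\kk\|_2^{-2r}\le \int_{\|x\|_2\ge K}\|x\|_2^{-2r}\,dx$ evaluated in spherical coordinates. The only difference is that you attempt to justify the lattice-sum-to-integral step via shifted cubes (and correctly flag the coordinate-hyperplane subtlety), whereas the paper simply asserts the comparison $\sum_{\kk\notin I^N_K}\|\kk\|_2^{-2r}\le\int_{\xx\notin J^N_K}\|\yy\|_2^{-2r}\,d\yy$ without further comment.
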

\begin{proof}
	From Proposition \ref{prop: Fcoeffcientbound}  we have that
	\begin{equation}\label{eqn:coefficientbounds}
		\sum_{\mathbf{k} \notin I^N_K} \big\vert  \widehat{F}(\mathbf{k})  \big\vert \leq \sum_{\mathbf{k} \notin I^N_K}   \frac{\sqrt{N}^r }{\| \mathbf{k}\|_{2}^r }\left\vert \widehat{\partial^r_nF}(\mathbf{k}) \right\vert		\end{equation}
	for any fixed $K\in \N$.
	Note that $n = \argmax\limits_{1 \leq j \leq N} \vert k_j \vert$ depends on $\mathbf{k}$, so we will write it as $n(\kk)$, and
	the right hand side of Eq. (\ref{eqn:coefficientbounds}) can be bounded using Cauchy-Schwarz as
	\[
	\sum_{\kk \notin I^N_K}
	\frac{\sqrt{N}^r }{\| \kk \|^r_2}
	\left\vert \widehat{\partial^r_{n(\kk)}F}(\kk) \right\vert
	\leq
	\sqrt{N}^r
	\left( \sum_{\kk \notin I^N_K}  \frac{1}{\| \kk \|_{2}^{2r} } \right)^{1/2}
	\left( \sum_{\kk \notin I^N_K}\left\vert\widehat{\partial^r_{n(\kk)}F}(\kk) \right\vert^2 \right)^{1/2}.
	\]
	Moreover,   since $\partial^r_nF$ is continuous and thus  square integrable for each $n\in \N$, then
	its Fourier coefficients are square summable:
	\begin{equation}\label{eqn: squaresummable}
		\sum_{\mathbf{k} \in \Z^N}\left\vert\widehat{\partial^r_nF}(\mathbf{k}) \right\vert^2 = \left\|\partial^r_n F\right\|_{L^2}^2 < \infty
	\end{equation}
	by Parseval's theorem.		Hence, summing over  $n$ and  rearranging terms we get
	\[
	\sum_{\kk \notin I^N_K} \left\vert \widehat{\partial^r_{n(\kk)}F}(\kk) \right\vert^2
	\;\;\leq\;\;
	\sum_{n=1}^N \sum_{\kk \notin I^N_K} \left\vert \widehat{\partial^r_nF}(\kk) \right\vert^2
	\;\;= \;\;
	\sum_{n=1}^N
	\big\| \partial^r_nF  -  S_K \partial^r_nF \big\|^2_{L^2}
	\] which goes to zero as $K\to \infty$.
	In order to bound the remaining sum of fractions,
	let $J^N_K =
	\left\{\mathbf{y}  \in \mathbb{R}^N \; \mid \; \| \mathbf{y} \|_{\infty} \leq K\right\}
	$
	and  let $B^N_K =
	\left\{\mathbf{y}   \in \mathbb{R}^N \;  \mid \; \| \mathbf{y} \|_{2} \leq K\right\}
	$.
	Observe that
	\begin{align*}
		\sum_{\mathbf{k} \notin I^N_K}  \frac{1}{\| \mathbf{k} \|_{2}^{2r} }
		&\leq   \int\limits_{\mathbf{x} \notin J^N_K}  \frac{1}{ \left( y_1^2 + \cdots + y_N^2 \right)^{r}} dy_1 \cdots dy_N \\
		&\leq   \int\limits_{\mathbf{x} \notin B^N_K}  \frac{1}{ \left( y_1^2 + \cdots + y_N^2 \right)^{r}} dy_1 \cdots dy_N
	\end{align*}
	which in higher dimensional spherical coordinates can be written as
	\[
	\int\limits_{\mathbb{S}^{N-1}} \int\limits_{K}^{\infty}  \frac{1}{\rho^{2r - N + 1}}  d\rho d\sigma
	\]
	for $d\sigma$  the differential of surface area on the unit sphere $\mathbb{S}^{N-1}$ (the differential solid angle)
	and $\rho$  the distance from a point in $\R^N$ to the origin.
	The integral  satisfies
	\begin{eqnarray*}
		\int\limits_{\mathbb{S}^{N-1}} \int\limits_{K}^{\infty}  \frac{1}{\rho^{2r - N + 1}}  d\rho d\sigma
		&=&  Area\left(\mathbb{S}^{N-1}\right)  \frac{\rho^{N-2r}}{N-2r} \Bigg\vert^{\infty}_{K}  \\
		&=&  \frac{Area\left(\mathbb{S}^{N-1}\right)}{K^{2r - N}(2r-N)}
	\end{eqnarray*}
	and thus
	\[		
	\sum_{\mathbf{k} \notin I^N_K}
	\left\vert \widehat{F}(\mathbf{k})  \right\vert
	\leq
	\left(
	\frac{Area(\mathbb{S}^{N-1}) N^r}{K^{2r- N}(2r-N)}
	\sum_{n=1}^N \big\| \partial^r_nF  -  S_K \partial^r_nF \big\|^2_{L^2}
	\right)^{1/2}
	\]
	where the right hand side goes to zero as $K\rightarrow \infty$.
\end{proof}	

\begin{remark}See
	\cite[Chapter VII, Corollary 1.9]{stein2016introduction} for a result  akin to Theorem \ref{thm: coefficientbounds}.
	While both   have   similar hypotheses and deal with absolute Fourier convergence, Theorem \ref{thm: coefficientbounds} above gives explicit bounds for the  size of the error term  $\sum\limits_{\mathbf{k} \notin I^N_Z} \left\vert\widehat{F}(\kk)\right\vert$.
	We will need such explicit estimates when discussing Fourier approximations to  persistent homology
	of sliding window point clouds in Section \ref{section:fourierseries}.
\end{remark}

Now,  absolute summability of the Fourier coefficients $\widehat{F}(\kk)$ implies
uniform convergence $S_K F \rightarrow F$,
\cite[Chapter VII, Corollary 1.8]{stein2016introduction}, since
\[
\left\vert F(\tt) - S_K F(\tt)\right\vert \leq \sum_{\kk \notin I_K^N} \left\vert\widehat{F}(\kk)\right\vert
\]
for all $\tt \in \T^N$.	Combining  this fact with  Eq. (\ref{eqn:absconvergenceoftail}),
yields the following Fourier series approximation theorem for quasiperiodic functions:
\begin{theorem}\label{thm: UnifConvFourierQuasi}
	Let $f$ be quasiperiodic with parent function $F \in C^r(\T^N)$, $r > \frac{N}{2}$.
	If $S_K f$ is defined as in Eq. (\ref{eqn: FourSeriesQuass}), then
	\[
	\| f -  S_Kf\|_\infty
	\leq
	\left(
	\frac{Area(\mathbb{S}^{N-1}) N^r}{K^{2r- N}(2r-N)}
	\sum_{n=1}^N \big\| \partial^r_nF  -  S_K \partial^r_nF \big\|^2_{L^2}
	\right)^{1/2}
	\] which goes to zero faster than $\frac{1}{K^{r - \frac{N}{2}}}$ as $K\to \infty$.
\end{theorem}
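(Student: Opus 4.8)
The plan is to reduce the statement almost entirely to already-established facts, with the only new work being the translation between the language of the parent function $F$ on $\T^N$ and the quasiperiodic function $f$ on $\R$. First I would invoke Theorem \ref{thm: FourierQuasiCoeff}: since $f$ is quasiperiodic with frequency vector $\omega \in \R^N$, it has a unique parent function $F \in C(\T^N)$, and the hypothesis $F \in C^r(\T^N)$ with $r > N/2$ puts us exactly in the setting of Theorem \ref{thm: coefficientbounds}. That theorem already gives the displayed inequality for the tail sum $\sum_{\kk \notin I^N_K} |\widehat{F}(\kk)|$, together with absolute summability of the full coefficient sequence.

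The key observation I would then make is the pointwise domination
\[
\left| f(t) - S_K f(t) \right|
= \left| F(t\omega) - S_K F(t\omega) \right|
\leq \sum_{\kk \in \Z^N} \left| \widehat{F}(\kk) - \widehat{S_K F}(\kk) \right|
= \sum_{\kk \notin I^N_K} \left| \widehat{F}(\kk) \right|,
\]
valid for every $t \in \R$ because the vector $t\omega \bmod 2\pi$ is just some point of $\T^N$, and the Fourier series of $F$ converges uniformly (as cited from \cite[Chapter VII, Corollary 1.8]{stein2016introduction}) so that $F(\tt) = \sum_\kk \widehat{F}(\kk)e^{i\<\kk,\tt\>}$ holds honestly at every $\tt$, with $S_K F$ its partial sum over $I^N_K$. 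Taking the supremum over $t \in \R$ on the left and noting that the right-hand side does not depend on $t$ gives
\[
\|f - S_K f\|_\infty \leq \sum_{\kk \notin I^N_K} \left| \widehat{F}(\kk) \right|,
\]
and then I would simply substitute the bound from Eq. (\ref{eqn:absconvergenceoftail}) to obtain the displayed estimate.

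For the final rate claim, I would argue that each $\partial^r_n F \in C(\T^N) \subset L^2(\T^N)$, so by the $L^2$-convergence of Fourier partial sums (Proposition 3.2.7 of \cite{grafakos2008classical}, already cited) we have $\big\|\partial^r_n F - S_K \partial^r_n F\big\|_{L^2} \to 0$ as $K \to \infty$ for each of the finitely many $n$; hence the sum $\sum_{n=1}^N \big\|\partial^r_n F - S_K \partial^r_n F\big\|^2_{L^2}$ is a bounded (indeed vanishing) factor, call it $o(1)$, while the prefactor contributes $K^{-(2r-N)}$. Taking square roots yields $\|f - S_K f\|_\infty = o\big(K^{-(r - N/2)}\big)$, i.e. convergence strictly faster than $1/K^{r - N/2}$, as asserted. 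I do not expect a genuine obstacle here; the only point requiring care is making sure the pointwise bound is stated for the right object — that $S_K f(t)$, defined in Eq. (\ref{eqn: FourSeriesQuass}) directly in terms of the evaluated coefficients $\widehat{F}(\kk)$, really is $S_K F$ evaluated at $t\omega$, which is immediate from comparing Eq. (\ref{eqn: FourSeriesQuass}) with Eq. (\ref{eqn: multifourier}) and the uniqueness/identification of $\widehat{F}(\kk)$ in Theorem \ref{thm: FourierQuasiCoeff}.
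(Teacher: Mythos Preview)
Your proposal is correct and follows essentially the same route as the paper: the paper establishes the pointwise bound $|F(\tt) - S_K F(\tt)| \leq \sum_{\kk \notin I_K^N} |\widehat{F}(\kk)|$ in the paragraph immediately preceding the theorem, then states the theorem as a direct combination of this bound (specialized to $\tt = t\omega$) with Eq.~(\ref{eqn:absconvergenceoftail}). Your write-up is, if anything, more explicit about the identification $S_K f(t) = S_K F(t\omega)$ and about why the $L^2$ tail of $\partial^r_n F$ vanishes, but the argument is the same.
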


\subsection{Persistent Homology}
\label{section: Background persHom}
As we mentioned in  Section \ref{section:Intro},
persistent homology is a tool from Topological Data Analysis used to
study the evolution of topological features in filtered spaces.
Indeed, for any  filtration $\mathcal{K} = \{K_\epsilon\}_{\epsilon \geq 0}$, taking homology in dimension $j \in \N$ and coefficients
in a field $\F$ yields a family
\[
H_j(\mathcal{K}; \F) =
\left\{\;
T_{\epsilon,\epsilon'} :
H_j(K_\epsilon; \F) \longrightarrow H_j(K_{\epsilon'} ;\F)
\;\; , \;\;
\epsilon \leq \epsilon'
\;\right\}
\]
of $\F$-vector spaces and linear transformations $T_{\epsilon,\epsilon'}$ induced by
the inclusion maps $K_\epsilon \hookrightarrow K_{\epsilon'}$, for $\epsilon \leq \epsilon'$.
The $j$-th \emph{persistent homology groups} are
\begin{equation}\label{eqn: PersHomoGroup}
	H^{\epsilon,\epsilon'}_j(\mathcal{K}; \F)
	:=
	\mathsf{Img}
	\left(
	T_{\epsilon,\epsilon'}
	\right)
\end{equation}
and their dimension over $\F$ are the \emph{persistent Betti numbers}
\begin{equation}\label{eqn: PerBettiNum}
	\beta_j^{\epsilon, \epsilon'}(\mathcal{K})
	:=
	\mathsf{rank}(T_{\epsilon,\epsilon'})
	=
	\mathsf{dim}_\F
	\left(
	H_j^{\epsilon,\epsilon'}(\mathcal{K};\F)
	\right).
\end{equation}

If   $\beta_j^{\epsilon, \epsilon}(\mathcal{K}) < \infty$ for every $\epsilon$---i.e., if $H_j(\mathcal{K}; \F)$ is  \emph{pointwise-finite}---then a theorem
of Crawley-Boevey \cite{crawley2015decomposition} contends that the isomorphism type
of $H_j(\mathcal{K};\F)$ is uniquely determined by a multiset of
intervals $I \subset [0,\infty]$, called the \emph{barcode}
of $H_j(\mathcal{K};\F)$, and denoted $\bcd_j(\mathcal{K})$.
The (undecorated) \emph{persistence diagram} $\dgm_j(\mathcal{K})$,
on the other hand,
is the multiset of pairs $(a,b)$
resulting from taking the endpoints $a\leq b$ of
the intervals in $\bcd_j(\mathcal{K})$.
In terms of persistent Betti numbers, one can check  that
\begin{equation}\label{eqn: BettiCount}
	\beta_j^{\epsilon, \epsilon'}(\mathcal{K})
	=
	\#
	\Big\{
	I \in \bcd_j(\mathcal{K}) \; \mid  \; [\epsilon,\epsilon'] \subset I
	\Big\}
\end{equation}
where cardinality   ($\#$) on the right hand side is that of  multisets.
If all intervals in $\bcd_j(\mathcal{K})$ are of the same type
(i.e., all open, closed, right/left open), then
\begin{equation}\label{eqn: BettiCount2}
	\beta_j^{\epsilon, \epsilon'}(\mathcal{K})
	=
	\#
	\Big\{
	(a,b) \in \dgm_j(\mathcal{K}) \; \mid  \; a \prec_\ell \epsilon \leq \epsilon' \prec_r b
	\Big\}
\end{equation}
where $\prec_\ell $ and $ \prec_r$ are chosen   to
coincide with the interval type of $\bcd_j(\mathcal{K})$.

The pointwise-finite hypothesis on $H_j(\mathcal{K};\F)$ can be relaxed to   $\beta_j^{\epsilon,\epsilon'}(\mathcal{K}) < \infty$
for all $\epsilon < \epsilon'$; this is called being
$\mathsf{q}$-\emph{tame}, and is a condition satisfied by the persistent homology of the Rips filtration (defined in Eq. (\ref{eqn: RipsFilt})) of any totally bounded metric space \cite[Proposition 5.1]{chazal2014persistence}.
It is known that barcodes and persistence diagrams can be defined in the
$\mathsf{q}$-tame case in such a way that Eq. (\ref{eqn: BettiCount}) (and also Eq. (\ref{eqn: BettiCount2}) if all intervals are of the same type) is still valid
\cite[Corollary 3.8, Theorem 3.9]{chazal2016observable}.
As a result, and when $(X,\dd_X)$ is totally bounded,
we have well-defined  Rips persistence diagrams
\[
\dgm_j^\mathcal{R}(X,\dd_X) :=
\dgm_j(\mathcal{R}(X,\dd_X))
\]
for every $j\in \N$.

A bit more is true: these diagrams
are well-behaved in the sense that they are stable under Gromov-Hausdorff perturbations on $(X,\dd_X)$.
Here is what this means.
The \emph{Hausdorff} distance in  a metric space $(\mathbb{M},\dd)$  between two
bounded and non-empty subsets $X,Y\subset \mathbb{M}$
is defined as
\[
\dd_H^{\mathbb{M}}(X,Y) := \inf \left\{\delta > 0  \mid X \subset Y^{(\delta)} \mbox{ and } Y \subset X^{(\delta)}\right\}.
\]
Here $X^{(\delta)}$ (resp. $Y^{(\delta)}$) is the union
of   open balls in $\mathbb{M}$ of radius $\delta > 0$
centered at points in $X$ (resp. $Y$).
Also, when the ambient metric space is clear,
the notation $\dd_H^{\mathbb{M}}(X,Y)$ is simplified to
$\dd_H(X,Y)$.
The \emph{Gromov-Hausdorff distance}, on the other hand,
is defined for  bounded metric spaces $(X,\dd_X)$, $(Y,\dd_Y)$ as
\[
\dd_{GH}\left((X,\dd_X), (Y,\dd_Y)\right)
:= \inf_{\mathbb{M}, \varphi, \psi}
\dd^{\mathbb{M}}_H
\big(
\varphi(X)
\; , \;
\psi(Y)
\big)
\]
where the infimum runs over all metric spaces
$(\mathbb{M}, \dd)$, and all isometric embeddings
$\varphi: (X,\dd_X) \hookrightarrow (\mathbb{M}, \dd)$,
$\psi : (Y,\dd_Y) \hookrightarrow (\mathbb{M}, \dd)$.
In particular, if $X,Y \subset (\mathbb{M}, \dd)$, then
\begin{equation}\label{eqn: GHvsH}
	\dd_{GH}((X, \dd\vert_{X}), (Y, \dd\vert_{Y}))
	\leq
	\dd_H^\mathbb{M}(X,Y).
\end{equation}

The Gromov-Hausdorff distance is  a measure
of similarity between bounded metric spaces;
in fact it is a pseudometric,
which   is zero if and only if the completions of the metric
spaces involved are isometric.
The stability of Rips persistence diagrams, on the other hand,
is an inequality comparing the Gromov-Hausdorff distance
between the input metric spaces, and a notion
of distance  between their persistence diagrams called the \emph{bottleneck distance}.
This distance is defined as follows:
two persistence diagrams
$\dgm$ and $\dgm'$ are said to be $\delta$-matched, $\delta > 0$, if there is a bijection
$\mu : A \longrightarrow A'$ of multisets
$A\subset \dgm$ and $A' \subset \dgm'$
for which:
\begin{enumerate}
	\item If $(x,y) \in A$ and $(x', y')  = \mu(x,y)$,
	then $\max\left\{|x-x'|, |y - y'| \right\} < \delta$\\[-.2cm]
	\item If $(x,y) \in (\dgm\smallsetminus A) \cup (\dgm' \smallsetminus A') $ then $y -x < 2\delta$
\end{enumerate}
The bottleneck distance between $\dgm$ and $\dgm'$ is
\begin{equation}\label{eqn: DefBottleNeck}
	\dd_B
	\big(\dgm, \dgm')
	:=
	\inf \,
	\left(
	\big\{
	\delta> 0 \mid
	\mbox{ $\dgm$ and $\dgm'$ are $\delta$-matched }
	\big\}
	\cup \{\infty\}
	\right).
\end{equation}
Finally,  the stability of Rips
persistent homology \cite[Theorem 5.2]{chazal2014persistence}
contends that
\begin{equation}\label{eqn: ThmStability}
	\dd_B
	\left(
	\dgm_j^\mathcal{R}(X,\dd_X),
	\dgm_j^\mathcal{R}(Y,\dd_Y)
	\right)
	\;\leq\;
	2\dd_{GH}
	\big(
	(X,\dd_X),
	(Y,\dd_Y)
	\big)
\end{equation}
for $(X,\dd_X)$ and $(Y,\dd_Y)$ totally bounded.

\section{Fourier Approximations of Sliding Window Persistence}\label{section:fourierseries}
With the preliminaries out of the way, we now move onto studying the Rips
persistent homology of sliding window point clouds from quasiperiodic functions.
Thus far we have that if $f$ is quasiperiodic and its parent function $F$ has enough regularity,
then $f$ can be uniformly approximated by the  truncated series $S_K f$.
This is the content of Theorem \ref{thm: UnifConvFourierQuasi},
and in  particular says that the higher the  smoothness of $F$, then the faster the degree of approximation $S_K f \rightarrow f$.
We will see next  that these results can be readily bootstrapped to the level
of sliding window point clouds, and hence to statements about Rips persistence diagrams.

\begin{theorem}\label{approx}
	Let $f$ be quasiperiodic with parent function	$F \in C^r(\T^N)$, $ r > \frac{N}{2}$.
	If
	\[\SW_{d,\tau} f = SW_{d,\tau} f(T)
	\mbox{\;\;\; and \;\;\;}
	\SW_{d,\tau} S_K f = SW_{d,\tau} S_K f(T)
	\;\;\; ,\;\;\;
	T\subset \R
	\] are the   sliding window point clouds
	of $f$ and $S_K f$, respectively,
	then the Hausdorff distance between them satisfies
	\begin{eqnarray*}
		\dd_H
		\big( \SW_{d,\tau}f \, , \, \SW_{d,\tau}S_Kf \big)
		&\leq& \sqrt{d+1}\|f - S_K\|_\infty
		\\
		&\leq&
		\left(
		\frac{Area(\mathbb{S}^{N-1})(d+1) N^r}{K^{2r- N}(2r-N)}
		\sum_{n=1}^N \big\| \partial^r_nF  -  S_K \partial^r_nF \big\|^2_{L^2}
		\right)^{1/2}
	\end{eqnarray*}
	which goes to zero faster than $\frac{1}{K^{r - \frac{N}{2}}}$ as $K\to \infty$.
\end{theorem}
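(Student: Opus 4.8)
The plan is to reduce everything to a pointwise estimate comparing, for each fixed $t \in T$, the two sliding window vectors $SW_{d,\tau}f(t)$ and $SW_{d,\tau}S_Kf(t)$, regarded as elements of $\C^{d+1}$ with its Euclidean metric. Their $j$-th coordinates are $f(t+j\tau)$ and $S_Kf(t+j\tau)$, so
\[
\big\| SW_{d,\tau}f(t) - SW_{d,\tau}S_Kf(t) \big\|_2
= \left( \sum_{j=0}^d \big| f(t+j\tau) - S_Kf(t+j\tau) \big|^2 \right)^{1/2}
\leq \sqrt{d+1}\, \|f - S_Kf\|_\infty,
\]
since each of the $d+1$ summands is bounded by $\|f - S_Kf\|_\infty^2$. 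The first step is to record this inequality, noting that it holds uniformly in $t \in T$.

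Next I would upgrade it to a Hausdorff-distance bound. Given $x = SW_{d,\tau}f(t) \in \SW_{d,\tau}f$, the point $y = SW_{d,\tau}S_Kf(t)$ lies in $\SW_{d,\tau}S_Kf$ and, by the previous display, is within $\sqrt{d+1}\|f - S_Kf\|_\infty$ of $x$; symmetrically, every point of $\SW_{d,\tau}S_Kf$ is within the same distance of a point of $\SW_{d,\tau}f$. Hence for every $\delta > \sqrt{d+1}\|f - S_Kf\|_\infty$ each point cloud is contained in the open $\delta$-neighborhood of the other inside $\C^{d+1}$, and taking the infimum over such $\delta$ yields $\dd_H(\SW_{d,\tau}f, \SW_{d,\tau}S_Kf) \leq \sqrt{d+1}\|f - S_Kf\|_\infty$. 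The second inequality of the statement is then immediate from Theorem \ref{thm: UnifConvFourierQuasi}: substitute its bound for $\|f - S_Kf\|_\infty$ and pull the factor $\sqrt{d+1}$ under the square root, turning it into a factor $(d+1)$. For the asymptotic rate, $\partial^r_nF$ is continuous, hence in $L^2(\T^N)$, so $\|\partial^r_nF - S_K\partial^r_nF\|_{L^2}\to 0$ as $K\to\infty$; writing $\epsilon_K := \sum_{n=1}^N \|\partial^r_nF - S_K\partial^r_nF\|^2_{L^2}$, the bound has the form $\sqrt{C\,\epsilon_K}\,K^{-(r - N/2)}$ with $\epsilon_K \to 0$, so it goes to zero faster than $1/K^{r-N/2}$.

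There is essentially no serious obstacle: the whole argument is a direct \emph{bootstrapping} of the sup-norm estimate of Theorem \ref{thm: UnifConvFourierQuasi} to the point-cloud level. The only point that needs a little care is the passage from the uniform pointwise bound to the Hausdorff bound, since $\dd_H$ is defined via open neighborhoods and an infimum; this is handled by the standard fact that a uniform one-sided bound $c$ on nearest-point distances gives $\dd_H \leq c$ (not merely $\dd_H < c'$ for all $c' > c$). It is also worth observing that $T \subset \R$ is arbitrary, so no density or compactness assumption on $T$ is required — the estimate is purely coordinatewise and therefore automatically uniform over whatever sample set $T$ one works with.
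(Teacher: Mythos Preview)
Your proposal is correct and follows essentially the same argument as the paper: bound $\|SW_{d,\tau}f(t) - SW_{d,\tau}S_Kf(t)\|_2$ by $\sqrt{d+1}\|f - S_Kf\|_\infty$ coordinatewise, pass to the Hausdorff distance via the mutual $\epsilon$-neighborhood containments, and then invoke Theorem~\ref{thm: UnifConvFourierQuasi}. Your treatment is in fact slightly more explicit than the paper's in spelling out the coordinatewise sum and the asymptotic rate.
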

\begin{proof}
	Let $\epsilon > \sqrt{d+1}\|f - S_K f\|_\infty $, and let $t\in T$.
	Then
	\[
	\|SW_{d,\tau}f(t) - SW_{d,\tau}S_K f(t) \|_2 \;\;\leq \;\sqrt{d+1}\|f  - S_K f\|_\infty  \;<\; \epsilon
	\]
	which implies that $\epsilon$ satisfies both
	\begin{equation}\label{eqn: HaussBound}
		\SW_{d,\tau} f \subset \big(\SW_{d,\tau} S_K f\big)^{(\epsilon)}
		\;\;\;\;
		\mbox{and}
		\;\;\;\;
		\SW_{d,\tau}S_K f \subset \big(\SW_{d,\tau} f\big)^{(\epsilon)}.
	\end{equation}
	Since the Hausdorff distance in $\C^{d+1}$ between $\SW_{d,\tau} f$ and $\SW_{d,\tau} S_K f$ is the infimum over all  $\delta > 0$ satisfying Eq. (\ref{eqn: HaussBound}), then we have that
	\[
	\dd_H \big(\SW_{d,\tau} f \, , \, \SW_{d,\tau} S_K f\big)
	\;\leq\;
	\epsilon.
	\]
	Because this is true for any $\epsilon > \sqrt{d+1}\|f - S_K f\|_\infty$,
	then
	\[
	\dd_H \big(\SW_{d,\tau} f \, , \, \SW_{d,\tau} S_K f\big) \leq \sqrt{d+1}\|f - S_K f\|_\infty
	\]
	and  the bound from
	Theorem \ref{thm: UnifConvFourierQuasi} finishes the proof.
\end{proof}

Using the stability of Rips persistent homology (Eq. (\ref{eqn: ThmStability})),
we can readily  bound  the bottleneck distance
between the corresponding persistence diagrams:

\begin{corollary}\label{Corollary: DGMapproximation}
	With the  same hypotheses of Theorem \ref{approx}, and for all $j\in \N$,
	\begin{eqnarray*}
		\dd_B \left( \dgm^\mathcal{R}_j(\SW_{d,\tau}f) , \dgm^\mathcal{R}_j(\SW_{d,\tau}S_Kf) \right)
		&& \\
		&&
		\hspace{-3.2cm}
		\leq \hspace{.2cm}
		2\sqrt{d+1}\|f - S_K f\|_\infty \\
		&&
		\hspace{-3.2cm}
		\leq \hspace{.2cm}
		2\left(
		\frac{Area(\mathbb{S}^{N-1})(d+1) N^r}{K^{2r- N}(2r-N)}
		\sum_{n=1}^N \big\| \partial^r_nF  -  S_K \partial^r_nF \big\|^2_{L^2}
		\right)^{1/2}
	\end{eqnarray*}
	and thus  goes to zero faster than $\frac{1}{K^{r - \frac{N}{2}}}$ as $K\to \infty$.
\end{corollary}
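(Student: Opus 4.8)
The plan is to obtain the corollary as a direct composition of three facts already at our disposal: the Hausdorff bound of Theorem \ref{approx}, the comparison between the Gromov--Hausdorff and Hausdorff distances from Eq. (\ref{eqn: GHvsH}), and the stability of Rips persistent homology from Eq. (\ref{eqn: ThmStability}). So this is a ``gluing'' argument rather than a new computation.

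First I would record that the point clouds $\SW_{d,\tau}f = SW_{d,\tau}f(T)$ and $\SW_{d,\tau}S_Kf = SW_{d,\tau}S_Kf(T)$ are bounded subsets of the Euclidean space $(\C^{d+1},\|\cdot\|_2)$: indeed $f = F\circ\omega$ is bounded because $F\in C(\T^N)$ is bounded on the compact torus $\T^N$, and $S_Kf$ is bounded because $S_KF$ is a finite Fourier polynomial; hence each coordinate of $SW_{d,\tau}f(t)$ and $SW_{d,\tau}S_Kf(t)$ is uniformly bounded in $t\in T$. A bounded subset of $\C^{d+1}$ is totally bounded, so (as discussed in Section \ref{section: Background persHom}) the Rips persistence diagrams $\dgm_j^\mathcal{R}(\SW_{d,\tau}f)$ and $\dgm_j^\mathcal{R}(\SW_{d,\tau}S_Kf)$ are well defined for every $j\in\N$, and the stability inequality Eq. (\ref{eqn: ThmStability}) is applicable to this pair.

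Next, viewing both point clouds inside the common ambient metric space $(\C^{d+1},\|\cdot\|_2)$, Eq. (\ref{eqn: GHvsH}) gives
\[
\dd_{GH}\big(\SW_{d,\tau}f,\SW_{d,\tau}S_Kf\big)\;\leq\;\dd_H\big(\SW_{d,\tau}f,\SW_{d,\tau}S_Kf\big),
\]
and substituting this into Eq. (\ref{eqn: ThmStability}) yields, for each $j\in\N$,
\[
\dd_B\big(\dgm_j^\mathcal{R}(\SW_{d,\tau}f),\dgm_j^\mathcal{R}(\SW_{d,\tau}S_Kf)\big)\;\leq\;2\,\dd_H\big(\SW_{d,\tau}f,\SW_{d,\tau}S_Kf\big).
\]
Finally I would plug in the two-part estimate from Theorem \ref{approx}: its first line produces the bound $2\sqrt{d+1}\,\|f-S_Kf\|_\infty$, its second line produces the explicit Fourier-tail expression, and the asymptotic rate ``faster than $1/K^{r-\frac{N}{2}}$'' is inherited verbatim from Theorem \ref{thm: UnifConvFourierQuasi} (via Theorem \ref{approx}). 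There is essentially no obstacle in this argument; the only point requiring a moment's care is that the stability theorem demands totally bounded inputs, which is exactly why the boundedness observation of the first step is needed before invoking Eq. (\ref{eqn: ThmStability}).
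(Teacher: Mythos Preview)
Your proposal is correct and matches the paper's approach exactly: the paper introduces the corollary with the single remark that ``using the stability of Rips persistent homology (Eq.~(\ref{eqn: ThmStability})), we can readily bound the bottleneck distance,'' and you have spelled out precisely that chain---stability plus the Gromov--Hausdorff/Hausdorff comparison of Eq.~(\ref{eqn: GHvsH}) plus Theorem~\ref{approx}. Your added verification that the sliding window point clouds are totally bounded (so that Eq.~(\ref{eqn: ThmStability}) applies) is a detail the paper leaves implicit, and it is correctly handled.
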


The main point of these approximation results is that
studying $\dgm_j^\mathcal{R}(\SW_{d,\tau} f)$
can be reduced to understanding $\dgm_j^\mathcal{R}(\SW_{d,\tau} S_K f)$
and its asymptotes as $K\to \infty$.
This
is a vastly more accessible simplification, as we will see shortly.

\section{The geometric structure of $\SW_{d,\tau}S_K f$}\label{section: structuretheorem}	
Our next goal is to show that for suitable  choices of
$d,K \in \N$, $T\subset \R$,  and $\tau>0$,     the closure of the sliding window
point cloud $\SW_{d, \tau}S_Kf = SW_{d,\tau} S_Kf(T)$ in $\C^{d+1}$,
is homeomorphic to an $N$-torus.
Indeed, for $F\in C^r(\T^N)$ and $K\in \N$, let
\[
\mathsf{supp}\left(\widehat{F}_K\right) =
\left\{\kk \in I_K^N \mid  \widehat{F}(\kk) \neq 0\right\}
\]
denote the support of the Fourier transform $\widehat{F}$ restricted to the square box $I^N_K$.

\begin{lemma}\label{lemma: lattice}
	Let $f(t) = F(\omega t)$ be quasiperiodic with frequency vector $\omega \in \R^N$,
	and parent function $F\in C^r(\T^N)$, $r > \frac{N}{2}$.
	Then, for all large enough $K\in \N$,   $\mathsf{supp}\left(\widehat{F}_K\right)$
	spans an $N$-dimensional $\Q$-vector space.
	
\end{lemma}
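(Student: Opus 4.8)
The plan is to reduce the statement to a claim about the \emph{full} support: I would first show that $\mathsf{supp}(\widehat{F}) := \{\kk\in\Z^N : \widehat{F}(\kk)\neq 0\}$ spans an $N$-dimensional $\Q$-vector space, i.e. all of $\Q^N$. Granting this, the lemma follows immediately, since a $\Q$-spanning set contains a $\Q$-basis $\kk_1,\dots,\kk_N\in\mathsf{supp}(\widehat{F})$, and with $K_0:=\max_i\|\kk_i\|_\infty$ one has $\{\kk_1,\dots,\kk_N\}\subseteq\mathsf{supp}(\widehat{F}_K)$ for every $K\ge K_0$. (The case $N=0$ is trivial, so assume $N\ge 1$.)

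To prove the reduced claim I would argue by contradiction, and this is exactly where the minimality requirement on $N$ from Remark \ref{rmk: MinmalFreqDim} enters. Suppose $M:=\dim_\Q\mathsf{span}_\Q\big(\mathsf{supp}(\widehat{F})\big)<N$. Let $L\subseteq\Z^N$ be the subgroup generated by $\mathsf{supp}(\widehat{F})$; as a subgroup of $\Z^N$ it is free abelian, and its rank equals $M$, so it admits a $\Z$-basis $\vv_1,\dots,\vv_M\in\Z^N$. After replacing some $\vv_l$ by $-\vv_l$ I may assume $\nu_l:=\langle\vv_l,\omega\rangle>0$ for all $l$, which is legitimate because each $\vv_l$ is a nonzero integer vector, so $\langle\vv_l,\omega\rangle=0$ would be a nontrivial $\Q$-linear relation among $\omega_1,\dots,\omega_N$. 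The same observation shows $\nu_1,\dots,\nu_M$ are themselves incommensurate: a relation $\sum_l q_l\nu_l=0$ with $q_l\in\Q$ not all zero clears denominators to a nonzero integer vector $\mathbf{w}=\sum_l q_l\vv_l$ with $\langle\mathbf{w},\omega\rangle=0$, again contradicting the incommensurability of $\omega$.

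The crux is then to build a continuous parent function for $f$ on the \emph{lower}-dimensional torus $\T^M$. Each $\kk\in\mathsf{supp}(\widehat{F})\subseteq L$ has a unique expression $\kk=\sum_{l=1}^M n_{\kk,l}\vv_l$ with $n_{\kk,l}\in\Z$; writing $n_\kk:=(n_{\kk,1},\dots,n_{\kk,M})\in\Z^M$ I would set
\[
G(\mathbf{s}) \;:=\; \sum_{\kk\in\mathsf{supp}(\widehat{F})}\widehat{F}(\kk)\,e^{i\langle n_\kk,\mathbf{s}\rangle},\qquad \mathbf{s}\in\R^M.
\]
Because the exponent vectors $n_\kk$ lie in $\Z^M$, $G$ is $2\pi$-periodic in each coordinate, and because $\sum_{\kk\in\Z^N}|\widehat{F}(\kk)|<\infty$ by Theorem \ref{thm: coefficientbounds}, the series converges uniformly by the Weierstrass $M$-test, so $G\in C(\T^M)$. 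Substituting $\mathbf{s}=(\nu_1 t,\dots,\nu_M t)$ and using $\langle n_\kk,\nu\rangle=\big\langle\sum_l n_{\kk,l}\vv_l,\omega\big\rangle=\langle\kk,\omega\rangle$ gives $G(\nu_1 t,\dots,\nu_M t)=\sum_{\kk}\widehat{F}(\kk)e^{i\langle\kk,\omega\rangle t}=f(t)$, the last equality because $S_K f\to f$ uniformly by Theorem \ref{thm: UnifConvFourierQuasi} while $S_K f$ converges to this same absolutely summable series. Hence $f$ is quasiperiodic with incommensurate frequency vector $\nu\in\R^M$ and $M<N$, contradicting the minimality of $N$; therefore $M=N$, which finishes the proof.

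I expect the main obstacle to be the step passing from a $\Q$-basis of the span of $\mathsf{supp}(\widehat{F})$ to a $\Z$-basis of the subgroup it generates: it is precisely this integrality that forces the reparametrized exponents $\langle n_\kk,\mathbf{s}\rangle$ to be genuinely $2\pi\Z^M$-periodic, so that $G$ descends to the torus $\T^M$ rather than merely to some cover of it. Combined with checking that the new frequencies $\nu_l$ stay incommensurate, this is the only delicate point; everything else is bookkeeping together with an appeal to the absolute summability already established in Theorem \ref{thm: coefficientbounds}.
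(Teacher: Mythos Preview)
Your proposal is correct and follows essentially the same strategy as the paper's proof: both take the $\Z$-span $L$ of $\mathsf{supp}(\widehat{F})$, pick a $\Z$-basis $\vv_1,\dots,\vv_M$ (the paper writes $\zz_1,\dots,\zz_n$), form the incommensurate frequency vector $\nu_l=\langle\vv_l,\omega\rangle$, build the continuous parent function $G\in C(\T^M)$ via the absolutely summable Fourier series (Theorem \ref{thm: coefficientbounds}), and invoke the minimality of $N$ (Remark \ref{rmk: MinmalFreqDim}) to force $M=N$. Your write-up is in fact slightly more explicit in two places the paper leaves implicit: the reduction from $\mathsf{supp}(\widehat{F})$ to $\mathsf{supp}(\widehat{F}_K)$ via a finite $\Q$-basis, and the identification $\mathrm{rank}_\Z L=\dim_\Q\mathsf{span}_\Q(\mathsf{supp}(\widehat{F}))$.
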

\begin{proof}
	The first thing to note is that since
	\[
	\mathsf{supp}\left(\widehat{F}\right)
	=
	\bigcup_{K\in \N}
	\mathsf{supp}\left(\widehat{F}_K\right)
	\subset
	\Z^N,
	\]
	then $V = \mathsf{span}_\R\left(\mathsf{supp}\left(\widehat{F}\right)\right)$
	is an $\R$-linear subspace of $\R^N$.
	It follows that
	\[
	L = \mathsf{span}_\Z\left(\mathsf{supp}\left(\widehat{F}\right)\right)
	\]
	is an additive discrete subgroup of $V$, and therefore a lattice \cite[Theorem 6.1]{stewart2015algebraic}
	of dimension $n \leq \mathsf{dim}_\R(V) \leq N$. It suffices to show that $n = N$.
	
	Let $\zz_1,\ldots, \zz_n \in L$ be so that $L = \mathsf{span}_{\Z}\{\zz_1,\ldots, \zz_n\}$.
	Incommensurability of $\omega$ implies that $\tilde{\omega}_j  = \<\zz_j , \omega\> $, $j=1,\ldots, n$, are $\Q$-linearly independent,
	and we can assume without loss of generality that $\tilde{\omega}_j >0$; otherwise replace $\zz_j$ by $-\zz_j$
	as a basis element for  $L$.
	Hence, $\tilde{\omega} = \left(\tilde{\omega}_1, \ldots, \tilde{\omega}_n\right) \in \R^n$
	is a vector of incommensurate frequencies.
	
	For   $\tt \in \T^n$ let
	\[
	G(\tt) :=
	\sum_{\aa \in \Z^n} \widehat{F}(a_1\zz_1 +\cdots + a_n \zz_n) e^{i\<\aa, \tt\>}
	\]
	which converges uniformly in $\tt\in \T^n$ since the Fourier coefficients $\widehat{F}(\kk)$
	are  absolutely summable (Theorem \ref{thm: coefficientbounds}).
	Therefore $G \in C(\T^n)$, and
	thus
	\begin{eqnarray*}
		f(t) &=& \sum_{\kk \in \Z^N} \widehat{F}(\kk) e^{i\<\kk, \omega\>t} \\
		&=& \sum_{\aa \in \Z^n} \widehat{F}(a_1\zz_1 +\cdots + a_n \zz_n) e^{i\<\aa, \tilde{\omega}\>t} \\
		&=&G\left( \tilde{\omega}t\right)
	\end{eqnarray*}
	which shows that $G$ is also a parent function for $f$, with $\tilde{\omega}$ as the corresponding
	frequency vector. Since the dimension $N$ of the frequency vector for $f$ is assumed
	to be minimal (Remark \ref{rmk: MinmalFreqDim}), then $n= N$,
	completing the proof.
\end{proof}

Now, if we write
$\mathsf{supp}\left(\widehat{F}_K\right)= \left\{\kk_1,  \ldots, \kk_{\alpha}\right\}$, for
$ 1\leq \alpha\leq (1 + 2K)^N$, then
\begin{align}\label{eqn:SWrelationshipX}
	SW_{d,\tau}S_Kf(t) = \Omega_{K,f} \cdot x_{K,f}(t)
\end{align}
where
\begin{align}\label{eqn: little_XZf}
	x_{K,f}(t)=
	\begin{bmatrix}
		\widehat{F}(\kk_1) e^{i \< \kk_1, \omega \> t} \\
		\vdots\\
		\widehat{F}(\kk_{\alpha}) e^{i \< \kk_{\alpha}, \omega \> t}
	\end{bmatrix}
	\in \C^\alpha
\end{align}
and $\Omega_{K,f}$ is the Vandermonde  $(d+1) \times \alpha $ matrix
\begin{align}\label{eqn: Omega}
	\Omega_{K,f}
	=
	\begin{bmatrix}
		1 & \cdots & 1 \\
		e^{i\langle\mathbf{k}_1, \omega\rangle \tau } & \cdots & e^{i\langle\mathbf{k}_{\alpha}, \omega\rangle \tau } \\
		\vdots &  & \vdots \\
		e^{i\langle\mathbf{k}_1, \omega\rangle \tau d} & \cdots & e^{i\langle\mathbf{k}_{\alpha}, \omega\rangle \tau d}
	\end{bmatrix}
\end{align}
with nodes $e^{i\langle\mathbf{k}_1, \omega\rangle \tau } , \ldots,  e^{i\langle\mathbf{k}_{\alpha}, \omega\rangle \tau } \in S^1
\subset \C$ \cite{aubel2019vandermonde}.
We define $\X_{K,f}$ to be the collection of vectors $x_{K,f}(t)$ as above in Eq. (\ref{eqn: little_XZf}):
\begin{equation}\label{eqn: big_XKf}
	\X_{K,f} =
	\left\{
	x_{K,f}(t) \mid t\in \R
	\right\}
	\subset\C^\alpha.
\end{equation}
The decomposition in Eq. (\ref{eqn:SWrelationshipX}) with Lemma \ref{lemma: lattice} yields conditions on the parameters $K, d,\tau$
under which the sliding window point cloud $\SW_{d,\tau} S_K f$ is dense in
a torus of the appropriate dimension. Indeed,

\begin{theorem}\label{thm: Structure}
	Let $f(t) = F(t\omega) $ be quasiperiodic with frequency vector $\omega \in \R^N$
	and parent function $F\in C^r(\T^N)$, $r > \frac{N}{2}$.
	Let
	\[
	\mathsf{supp}(\widehat{F}_K ) = \{\kk_1,\ldots, \kk_\alpha\}
	\]
	and assume that $\tau > 0$ is not an integer multiple of
	$\frac{2\pi}{\<\kk_n - \kk_m , \omega\>}$ for  $1 \leq n < m \leq \alpha$.
	If $K \in \N$ is large enough so that
	$\mathsf{supp}(\widehat{F}_K)$ spans an $N$-dimensional $\Q$-vector space, and $d \geq \alpha -1 $, then the sliding window point cloud
	\[
	\SW_{d,\tau} S_K f = SW_{d,\tau} S_K f(\R)
	\]
	is dense in an $N$-torus embedded in $\C^{d+1}$.
\end{theorem}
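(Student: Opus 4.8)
The plan is to combine the factorization $SW_{d,\tau} S_K f(t) = \Omega_{K,f}\cdot x_{K,f}(t)$ of Eq.~(\ref{eqn:SWrelationshipX}) with Kronecker's theorem in the form of Corollary~\ref{Corollary: Kronecker}, and then transport the resulting torus through two continuous maps. Write $\mathsf{supp}(\widehat{F}_K) = \{\kk_1,\ldots,\kk_\alpha\}$ and set $\beta_n = \<\kk_n,\omega\>$, $\beta = (\beta_1,\ldots,\beta_\alpha)$. The first step is to check that $\mathsf{span}_\Q\{\beta_1,\ldots,\beta_\alpha\}$ has dimension $N$ over $\Q$: the inclusion in $\mathsf{span}_\Q\{\omega_1,\ldots,\omega_N\}$ is immediate since $\kk_n\in\Z^N$, and conversely, because $\{\kk_1,\ldots,\kk_\alpha\}$ spans an $N$-dimensional $\Q$-subspace of $\Q^N$ and hence all of $\Q^N$, each standard basis vector $e_j$---and therefore each $\omega_j = \<e_j,\omega\>$---is a $\Q$-linear combination of the $\beta_n$; so the two $\Q$-spans coincide, and incommensurability of $\omega$ forces dimension exactly $N$. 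Corollary~\ref{Corollary: Kronecker} then says the closure $\mathbb{T}_\beta := \overline{\R\beta/2\pi\Z^\alpha}$ is an $N$-torus embedded in $\T^\alpha$. (Lemma~\ref{lemma: lattice} guarantees that a $K$ satisfying the spanning hypothesis exists.)

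Next I would introduce the map $\Phi:\T^\alpha\longrightarrow\C^\alpha$ given by $\Phi(\theta_1,\ldots,\theta_\alpha) = \big(\widehat{F}(\kk_1)e^{i\theta_1},\ldots,\widehat{F}(\kk_\alpha)e^{i\theta_\alpha}\big)$. Since $\widehat{F}(\kk_n)\neq 0$ for all $n$, $\Phi$ is injective, hence---$\T^\alpha$ being compact Hausdorff---a homeomorphism onto its image, and one has $x_{K,f}(t) = \Phi(t\beta \mod 2\pi)$, so $x_{K,f}(\R) = \Phi(\R\beta/2\pi\Z^\alpha)$. Because $\Phi(\mathbb{T}_\beta)$ is compact, hence closed in $\C^\alpha$, taking closures commutes with $\Phi$ and $\overline{x_{K,f}(\R)} = \Phi(\mathbb{T}_\beta)$ is an $N$-torus embedded in $\C^\alpha$. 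Now apply the linear map $\Omega_{K,f}:\C^\alpha\to\C^{d+1}$. Its nodes $e^{i\<\kk_n,\omega\>\tau}$ are pairwise distinct: for $n\neq m$ incommensurability of $\omega$ gives $\<\kk_n-\kk_m,\omega\>\neq 0$, and by hypothesis $\tau$ is not an integer multiple of $2\pi/\<\kk_n-\kk_m,\omega\>$, which is exactly the statement $e^{i\<\kk_n,\omega\>\tau}\neq e^{i\<\kk_m,\omega\>\tau}$. A $(d+1)\times\alpha$ Vandermonde matrix with distinct nodes and $d+1\geq\alpha$ has rank $\alpha$, so $\Omega_{K,f}$ is injective.

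Finally I would assemble the pieces. The composite $\mathbb{T}_\beta\hookrightarrow\T^\alpha\xrightarrow{\Phi}\C^\alpha\xrightarrow{\Omega_{K,f}}\C^{d+1}$ is injective and continuous out of a compact Hausdorff space, hence a homeomorphism onto its image, so $\Omega_{K,f}(\Phi(\mathbb{T}_\beta))$ is an $N$-torus embedded in $\C^{d+1}$. It is compact, hence closed, and contains $\SW_{d,\tau} S_K f = \Omega_{K,f}\big(\Phi(\R\beta/2\pi\Z^\alpha)\big)$, which is dense in it since $\R\beta/2\pi\Z^\alpha$ is dense in $\mathbb{T}_\beta$ and $\Omega_{K,f}\circ\Phi$ is continuous; hence $\overline{\SW_{d,\tau} S_K f} = \Omega_{K,f}(\Phi(\mathbb{T}_\beta))$, which is the claim. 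The two slightly delicate points are the ``closure commutes with a continuous map onto a compact image'' steps and the Vandermonde rank computation; I expect the latter---precisely where all three hypotheses on $\tau$, $K$, and $d$ enter---to be the conceptual crux, the rest being routine point-set topology and linear algebra.
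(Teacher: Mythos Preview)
Your proof is correct and follows essentially the same route as the paper's own argument: both invoke Corollary~\ref{Corollary: Kronecker} with $\beta_n = \langle\kk_n,\omega\rangle$ to get the $N$-torus in $\T^\alpha$, push it into $\C^\alpha$ via the coordinatewise exponential-with-coefficients map (your $\Phi$), and then apply the injective Vandermonde map $\Omega_{K,f}$. You are more explicit than the paper in two places---the verification that $\dim_\Q\mathsf{span}\{\beta_1,\ldots,\beta_\alpha\}=N$ (which the paper absorbs into the joint citation of Lemma~\ref{lemma: lattice} and Corollary~\ref{Corollary: Kronecker}) and the introduction of $\Phi$ together with the closure bookkeeping---but these are elaborations, not a different strategy.
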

\begin{proof}
	The first thing to note is that since $\tau > 0 $ is not an integer multiple of any $\frac{2\pi}{\<\kk_n - \kk_m , \omega\>}$,
	$1 \leq n < m \leq \alpha$,
	then the points $e^{i\langle\mathbf{k}_1, \omega\rangle \tau } , \ldots,  e^{i\langle\mathbf{k}_{\alpha}, \omega\rangle \tau } \in S^1$
	are all  distinct.
	Thus, the Vandermonde matrix $\Omega_{K,f}$ is full rank.
	This can be checked via induction on $\alpha$, by showing that the determinant of an
	$\alpha\times \alpha$ Vandermonde matrix with   nodes $\zeta_1,\ldots, \zeta_\alpha$
	is $\prod\limits_{1\leq n < m \leq \alpha} (\zeta_m - \zeta_n)$.
	Combining this observation with  $d+1\geq \alpha$,
	implies that $\Omega_{K,f} : \C^{\alpha} \longrightarrow \C^{d+1}$
	is injective as a linear transformation.
	
	Now,  Corollary \ref{Corollary: Kronecker} with $\beta_1 = \<\kk_1,\omega\> ,\ldots, \beta_\alpha = \<\kk_\alpha, \omega\>$,
	together with  Lemma
	\ref{lemma: lattice}, imply that for all large enough $K\in\N$ the point cloud $\X_{K,f}$ (defined in Eq. (\ref{eqn: big_XKf}))
	is dense in an $N$-torus embedded in $\C^{\alpha}$.
	The result follows from $\Omega_{K,f}$ being a linear homeomorphism onto its image.
\end{proof}

\begin{corollary} With the same hypotheses of Theorem \ref{thm: Structure}, and if
	$\{\pi,\omega_1,\ldots, \omega_N\}$ is incommensurate, then the sliding window point cloud
	\[
	\SW_{d,\tau} S_K f = SW_{d,\tau} S_Kf( \Z)
	\]
	is dense in an  $N$-torus embedded in $\C^{d+1}$.
\end{corollary}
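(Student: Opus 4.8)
The plan is to run the proof of Theorem \ref{thm: Structure} essentially verbatim, with $\Z$ in place of $\R$ as the index set, the only new ingredient being an integer version of Corollary \ref{Corollary: Kronecker}. Concretely, the factorization $SW_{d,\tau}S_K f(t) = \Omega_{K,f}\cdot x_{K,f}(t)$ from Eq. (\ref{eqn:SWrelationshipX}) and the injectivity of the linear map $\Omega_{K,f}\colon\C^\alpha\to\C^{d+1}$ depend only on the hypotheses on $\tau$ and $d$ carried over from Theorem \ref{thm: Structure}, so they are unchanged. Writing $\beta_n := \<\kk_n,\omega\>$ and $D := \mathrm{diag}\big(\widehat F(\kk_1),\ldots,\widehat F(\kk_\alpha)\big)$, an invertible diagonal matrix, the point cloud $\{x_{K,f}(n)\mid n\in\Z\}$ is the image of $\Z\beta/2\pi\Z^\alpha = \{n\beta\bmod 2\pi\mid n\in\Z\}$ under the coordinatewise exponential $\T^\alpha\cong(S^1)^\alpha\hookrightarrow\C^\alpha$ followed by $D$; since both maps are homeomorphisms onto their images, it suffices to prove that $\Z\beta/2\pi\Z^\alpha$ is dense in an $N$-torus embedded in $\T^\alpha$, and then transport everything through $\Omega_{K,f}$.

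For the integer Kronecker statement the first task is to verify the hypothesis of Remark \ref{rmk: IntegerKronecker}. By assumption $\mathsf{supp}(\widehat F_K) = \{\kk_1,\ldots,\kk_\alpha\}$ spans an $N$-dimensional $\Q$-subspace of $\Q^N$; since $\omega_1,\ldots,\omega_N$ are incommensurate, the $\Q$-linear map $v\mapsto\<v,\omega\>$ is injective on $\Q^N$, so $\mathsf{span}_\Q\{\beta_1,\ldots,\beta_\alpha\} = \mathsf{span}_\Q\{\omega_1,\ldots,\omega_N\}$ has $\Q$-dimension $N$. Reorder the $\kk_n$ so that $\beta_1,\ldots,\beta_N$ is a $\Q$-basis. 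If $c_0\pi + c_1\beta_1 + \cdots + c_N\beta_N = 0$ with $c_i\in\Q$, then $c_0\pi\in\mathsf{span}_\Q\{\omega_1,\ldots,\omega_N\}$; incommensurability of $\{\pi,\omega_1,\ldots,\omega_N\}$ forces $c_0 = 0$, and then $\Q$-linear independence of $\beta_1,\ldots,\beta_N$ forces $c_1=\cdots=c_N=0$. Hence $\{\pi,\beta_1,\ldots,\beta_N\}$ is incommensurate, so Remark \ref{rmk: IntegerKronecker} gives that $\Z(\beta_1,\ldots,\beta_N)/2\pi\Z^N$ is dense in $\T^N$.

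It remains to propagate this density to the full tuple $\beta = (\beta_1,\ldots,\beta_\alpha)$, and here the proof of Corollary \ref{Corollary: Kronecker} applies mutatis mutandis with $\Z$ in place of $\R$ and with Remark \ref{rmk: IntegerKronecker} used in place of the last sentence of Theorem \ref{Kroneckermulti}. For $N\ge 2$: choosing integers $k_j,k_{j,n}$ with $k_j\beta_{N+j} = \sum_{n=1}^N k_{j,n}\beta_n$, one forms the same continuous maps $\phi\colon\T^N\to\T^\alpha$ and $\psi\colon\T^\alpha\to\T^\alpha$; $\phi$ is a topological embedding and $\psi$ is injective on $\overline{\R\beta/2\pi\Z^\alpha}$, hence on the subset $\overline{\Z\beta/2\pi\Z^\alpha}$. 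The identity $\psi(t\beta) = \phi\big(t(\beta_1,\ldots,\beta_N)\big)$ holds for all $t\in\R$, in particular for $t\in\Z$, and since continuous images of compacta are closed, $\psi$ and $\phi$ commute with closure here; combined with $\overline{\Z(\beta_1,\ldots,\beta_N)/2\pi\Z^N} = \T^N$ from the previous paragraph this yields $\psi\big(\overline{\Z\beta/2\pi\Z^\alpha}\big) = \phi(\T^N)\cong\T^N$. As $\psi$ restricts to a homeomorphism onto its image, $\overline{\Z\beta/2\pi\Z^\alpha}$ is an $N$-torus embedded in $\T^\alpha$; the case $N=1$ is identical after replacing $\phi,\psi$ by the circle embedding used in that proof. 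Pushing this torus forward through the coordinatewise exponential, through $D$, and through the injective linear map $\Omega_{K,f}$ shows $\SW_{d,\tau}S_K f = SW_{d,\tau}S_K f(\Z)$ is dense in an $N$-torus embedded in $\C^{d+1}$.

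The only step requiring genuine care is the incommensurability check of the second paragraph: Remark \ref{rmk: IntegerKronecker} needs $\pi$ to be $\Q$-linearly independent from a basis of the frequency lattice $\{\beta_1,\ldots,\beta_\alpha\}$, and since the $\beta_n$ are integer combinations of the $\omega_n$ rather than the $\omega_n$ themselves, one must first observe $\mathsf{span}_\Q\{\beta_n\} = \mathsf{span}_\Q\{\omega_n\}$. Everything else is a routine re-run of Corollary \ref{Corollary: Kronecker} and Theorem \ref{thm: Structure}.
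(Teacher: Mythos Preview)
Your proposal is correct and follows essentially the same route as the paper's own proof. The paper's argument is terse---it simply observes that incommensurability of $\{\pi,\omega_1,\ldots,\omega_N\}$ implies incommensurability of $\{\pi,\langle\kk_1,\omega\rangle,\ldots,\langle\kk_N,\omega\rangle\}$ and then says the result follows as in Theorem~\ref{thm: Structure} with the integer version of Kronecker's theorem (Remark~\ref{rmk: IntegerKronecker}) in place of the real version---and your write-up is a faithful expansion of exactly that sketch, including a careful justification of the incommensurability step via $\mathsf{span}_\Q\{\beta_n\}=\mathsf{span}_\Q\{\omega_n\}$ that the paper leaves implicit.
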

\begin{proof} If $\kk_1,\ldots, \kk_N \in \Z^N$ are $\Q$-linearly independent,
	then incommensurability of $\{\pi,\omega_1,\ldots, \omega_N\}$ implies incommensurability of
	$\{\pi, \<\kk_1,\omega\>, \ldots, \<\kk_N,\omega\>\}$. The result
	follows in the same way as Theorem \ref{thm: Structure}, but using the integer version of Kronecker's theorem
	as starting point (see Remark \ref{rmk: IntegerKronecker}).
\end{proof}

We would like to emphasize that the condition on $\tau$ in Theorem \ref{thm: Structure}
only guarantees the topology of an $N$-torus.
Preserving the geometric structure as much as possible when going from $\X_{K,f}$ to $\SW_{d,\tau} S_K f$,
and consequently amplifying the toroidal features in
$\dgm_j^\mathcal{R}\left(\SW_{d,\tau} S_K f\right)$, requires specific optimizations on $\tau$.

\section{Parameter selection: how to optimize  $d$ and $\tau$?}\label{section: parameterselection}

\subsection{The embedding dimension}\label{subSection:d}
In practice,
the diagrams
\[
\dgm_j^\mathcal{R}\left(SW_{d,\tau} f(T)\right) \;\;\;\; ,\;\;\;\;  T\subset \R  \;\;\mbox{finite}
\]
are computed
as  approximations to those of  $SW_{d,\tau} f(\R)$;
the latter set is relatively compact,
and hence the stability theorem (see Eq. (\ref{eqn: ThmStability})) implies that finite samples
provide arbitrarily good approximations.
The difficulty lies in that as $d \to \infty$, it becomes necessary
for $T$ to also grow in order to overcome
the curse of (ambient space) dimensionality, and provide appropriate geometric recovery \cite{radovanovic2010hubs}.
This is problematic since the Rips filtration
grows exponentially in the number of points,
and the matrix reduction algorithm for computing persistent homology is in the worst case cubic in the number of simplices \cite{morozov2005persistence}.
It is thus desirable for $d$ to be as small as possible.
With this   and Theorem \ref{thm: Structure} in mind, we propose the following procedure for choosing $d$:
Let $K$ be the smallest integer so that
$\mathsf{supp}(\widehat{F}_K)$ spans an $N$-dimensional
vector space over $\Q$, and let
$d$ be the cardinality ($\alpha$) of $\mathsf{supp}(\widehat{F}_K)$.
When $f$ is given numerically as a
potentially noisy time
series sampled at finitely many evenly spaced
time points, then  $d$ can be estimated  as the number
of prominent peaks in the spectrum of
$f$.

\begin{remark}\label{rmk: anotherd}
	The structure theorems for both periodic functions \cite[Theorem 5.6]{sw1pers} and quasiperiodic functions (Theorem \ref{thm: Structure}) only require $d\geq \alpha - 1$. While the choice   $d = \alpha-1$ works for periodic signals in practice, we will demonstrate in Example \ref{example: tau} that   $d = \alpha$ is
	preferable in the quasiperiodic case.
	This discrepancy arises in the computation of the time delay $\tau$.
	Indeed, while for periodic functions  there is a clear closed-form choice of $\tau$, it turns out that this is typically not possible in the quasiperiodic case.
	We will investigate how in what follows.
\end{remark}

\subsection{The time delay}\label{subSection:tau}
One way in which $\tau$ controls the shape
of $\SW_{d,\tau} S_K f$ is via the condition number (i.e., the largest singular value divided by the smallest singular value) of the
Vandermonde matrix $\Omega_{K,f}$ (defined in Eq. (\ref{eqn: Omega})).
Indeed, when this number is much larger than 1
and the singular subspaces from the smallest singular
values of $\Omega_{K,f}$ intersect $\overline{\X_{K,f}}$ transversally, then the persistence of the toroidal features
of $\SW_{d,\tau} S_K f$ localized along these directions
can be greatly diminished.
One can avoid this problem by selecting a $\tau >0$ promoting orthogonality
between the columns $\uu_1,\ldots, \uu_\alpha$ of $\Omega_{K,f}$.
Indeed,
mutual orthogonality together with $\|\uu_1\| = \cdots = \|\uu_\alpha\| = \sqrt{d+1}$ would imply
that $\Omega_{K,f}$ is $\sqrt{d+1}$ times a linear isometry.
Such a transformation would have condition number equal to 1, and would preserve the persistent features of $\X_{K,f}$ (these are described in Theorem \ref{theorem: longbarsX}).
That said, exact mutual orthogonally of the $\uu_j$'s is not possible in
general, for if $N \geq 3$, then
$\<\uu_1,\uu_2\> = \<\uu_1,\uu_3\> = 0$ implies that
there exist $m,m'\in \Z$ satisfying
\begin{eqnarray*}
	\<\kk_1 - \kk_2 ,\omega\>\tau(d+1) &=& 2\pi m \\
	\<\kk_1 - \kk_3, \omega\>\tau(d+1) &=& 2\pi m'
\end{eqnarray*}
which in turn would imply
\[
m'\<\kk_1 - \kk_2, \omega\> = m\<\kk_1 - \kk_3, \omega\>
\]
contradicting either the linear independence of
$\kk_1,\kk_2 ,\kk_3$, or the incommensurability of $\omega$.
We will settle for the next best option:
to let $\tau$ be so that
the $\uu_j$'s are, in average, as orthogonal as possible.
In other words, we will choose $\tau$ as
a minimizer over
$[0, \tau_\mathsf{max}]$ of the scalar function
\begin{equation}\label{eqn: Gopt}
	\Gamma(x) :=
	\sum_{1 \leq j < \ell \leq \alpha }
	\left\vert
	1 + e^{i\<\kk_j - \kk_\ell, \omega\>x}
	+ \cdots +
	e^{i\<\kk_j - \kk_\ell, \omega\>xd}
	\right\vert^2
\end{equation}
which is exactly the sum of squared magnitudes of the inner products $\<\uu_j, \uu_\ell\>$ between the columns
of the Vandermonde matrix $\Omega_{K,f}$.
The thing to note is that when $f$ is given
as a   noisy finite sample,
then the inner products $\<\kk_j,\omega\>$
can be estimated as the frequency locations of the prominent peaks
in the spectrum of $f$.

\begin{example}\label{example: tau}
	As an illustration of our parameter selection procedure, let
	\[
	f(t) = e^{i t} + e^{i \sqrt{2}t} + e^{i \sqrt{3}t}  \;\;\; ,\;\;
	0 \leq t \leq 400.
	\]
	The real and imaginary part of this function are shown in Figure \ref{fig:expRealImgPart}
	below.
	\begin{figure}[!htb]
		\centering
		\subfloat{
			\includegraphics[width=\textwidth]{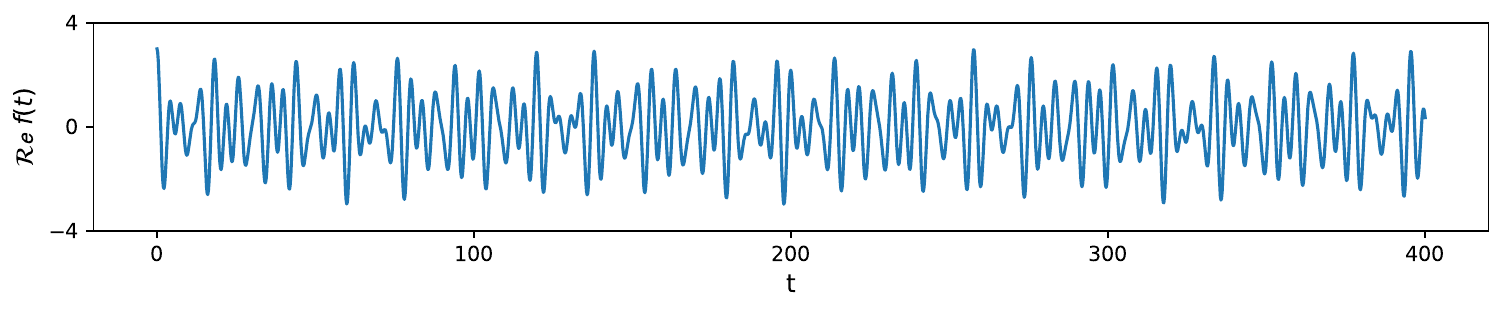}
		}
		
		\subfloat{
			\includegraphics[width=\textwidth]{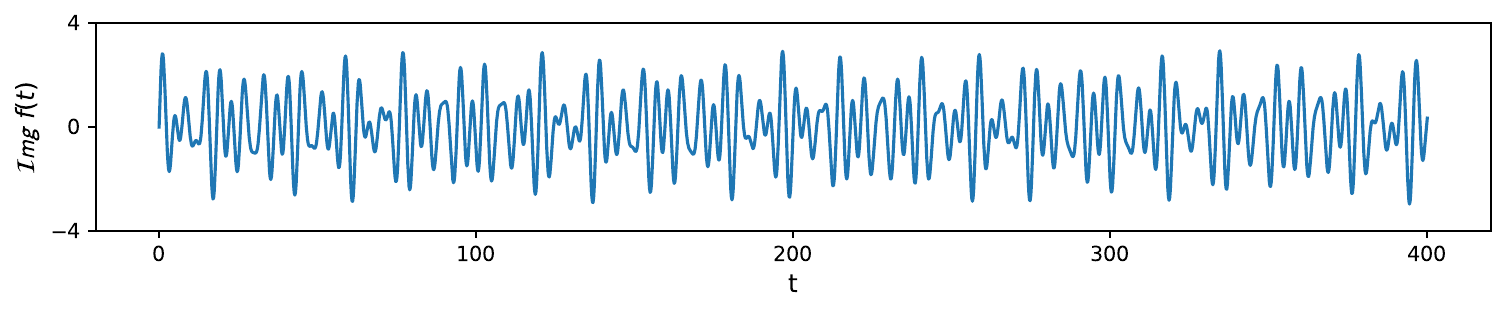}
		}
		\caption{Real (top) and imaginary (bottom) part of
			the function $f(t) = e^{it} + e^{i\sqrt{2}t} + e^{i\sqrt{3}t}$, $0\leq t \leq 400$.}
		\label{fig:expRealImgPart}
	\end{figure}
	
	\noindent It can be readily checked that $\omega = \left(1, \sqrt{2}, \sqrt{3}\right)$ is the frequency vector
	for $f$, and
	\[
	\mathsf{supp}(\widehat{F}_K) = \big\{(1,0,0), (0,1,0), (0,0,1)\big\}
	\;\;\; ,\;\; K\geq 1.
	\]
	Following the discussion from Section \ref{subSection:d} we
	let $d = 3$ (the cardinality of $\mathsf{supp}(\widehat{F}_K)$) and compute
	$\dgm_j^\mathcal{R}\left(SW_{d,\tau} f(T)\right)$ for $T\subset [0,400]$
	in dimensions $j=1,2$ as follows.
	We begin by evaluating $SW_{d,\tau}f(t)$ at 2,000 evenly spaced points in $[0,400]$,
	and then further subsample this point cloud by selecting 800 points via \verb"maxmin" sampling.
	That is, we pick   $t_1\in\widetilde{T} = \left\{ \frac{n}{5} \mid n=0,\ldots, 2000\right\}$ uniformly at random, and if $t_1 ,\ldots, t_{\ell} \in \widetilde{T}$ have been selected, then we let
	\[
	t_{\ell +1 } = \argmax_{t \in \widetilde{T}} \min
	\Big\{
	\|SW_{d,\tau} f(t) - SW_{d,\tau}f(t_1)\|
	, \ldots,
	\|SW_{d,\tau} f(t) - SW_{d,\tau}f(t_{\ell})\|
	\Big\}.
	\]
	This inductive process continues until the sampling set
	$T = \{t_1, \ldots, t_{800}\} \subset \widetilde{T}$
	is constructed, and then we compute the Rips persistence
	diagrams of $SW_{d,\tau} f(T)$ in dimensions $j=1,2$, coefficients in
	$\Z_2$, and two choices of time delay: $\tau = 16.458$ and $\tau = 49.325$.
	The resulting persistence diagrams are shown in Figure \ref{fig:dgmsDiffTaus} below.
	\begin{figure}[!htb]
		\centering
		\includegraphics[width=0.98\textwidth]{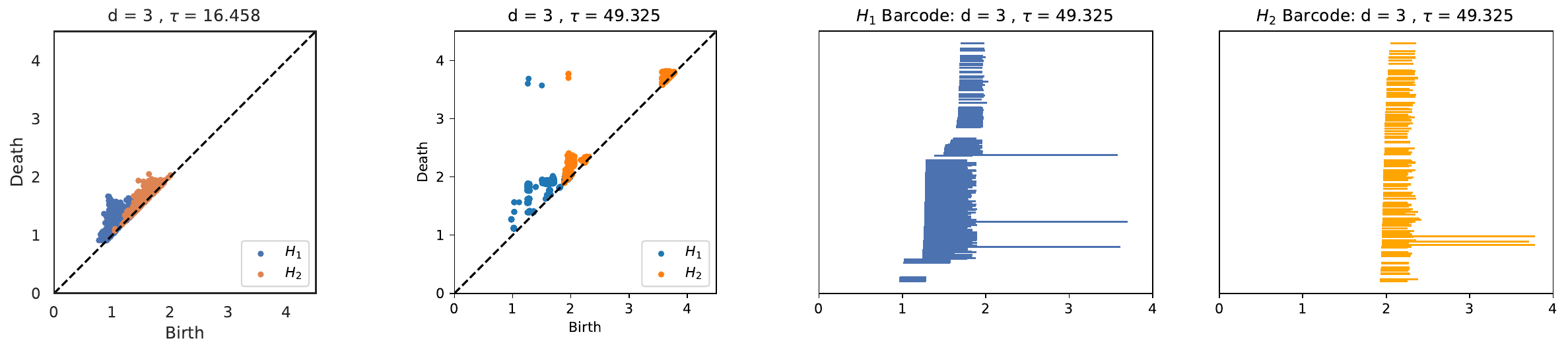}
		\caption{Rips persistence diagrams of $SW_{d,\tau} f(T)$ in dimensions $j=1$ (blue),  $j=2$ (orange), $\F = \Z_2$,  $d =3$, $\tau = 16.458$ (first) and $\tau = 49.325$ (second). Computations performed with \texttt{Ripser.py} \cite{ctralie2018ripser}. We also provide the barcode representations in dim $1$ (third) and dim $2$ (fourth) to substantiate that there are indeed three strong classes in both dimensions.}\label{fig:dgmsDiffTaus}
	\end{figure}
	
	We note that the  \verb"maxmin" sampling is used here because it selects subsample points in way that prevents clustering. This can be observed in the equation above: the time $t_{\ell +1 }$ selected corresponds to the point $SW_{d,\tau} f(t_\ell)$ which is the farthest from the already chosen set $\{SW_{d,\tau}f(t_1), SW_{d,\tau}f(t_2), \dots,SW_{d,\tau}f(t_\ell)\}$. 
	
	For this particular example we expect persistence diagrams   consistent
	with a   3-torus---i.e., 3 strong classes in dimension 1, and 3 strong classes in dimension 2--- since there are three linearly independent
	frequencies: $1,\sqrt{2}$ and $ \sqrt{3}$.
	That said, and as  Figure \ref{fig:dgmsDiffTaus} shows, a poor choice of time delay (e.g., $\tau = 16.458$) can completely obscure these toroidal features with sampling artifacts (points near the diagonal).
	This 
	stresses  how important the need for delay optimization can be.
	
	A broader picture of how the persistence of the top 3 features in each dimension
	varies with $\tau$ is shown in Figure \ref{fig:PersVsTau}.
	\begin{figure}[!htb]
		\centering
		\subfloat{
			\includegraphics[width=\textwidth]{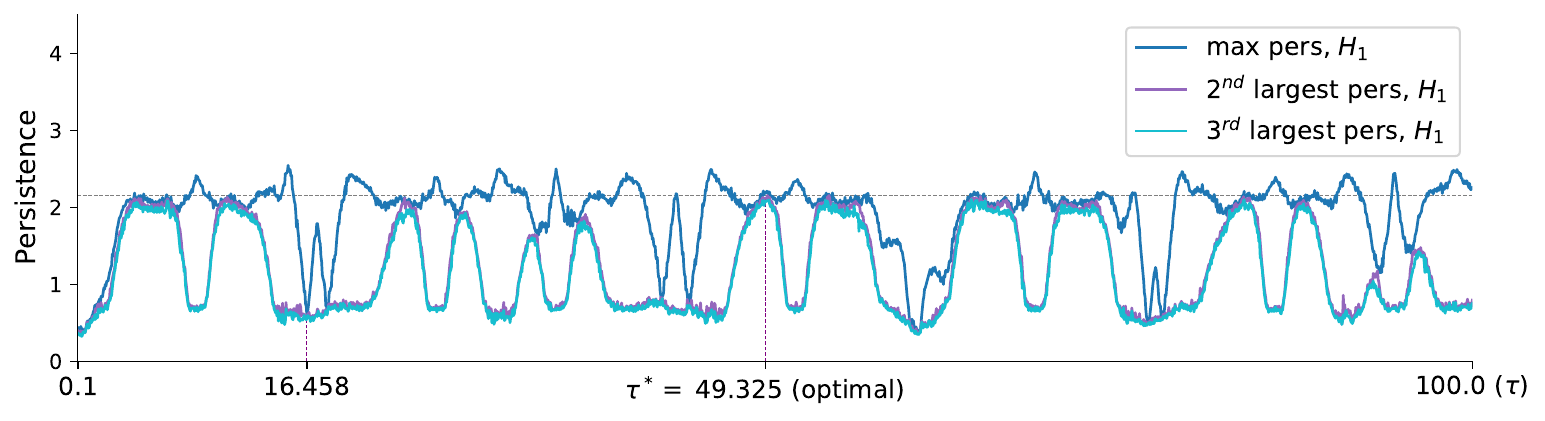}
		}
		
		\subfloat{
			\includegraphics[width=\textwidth]{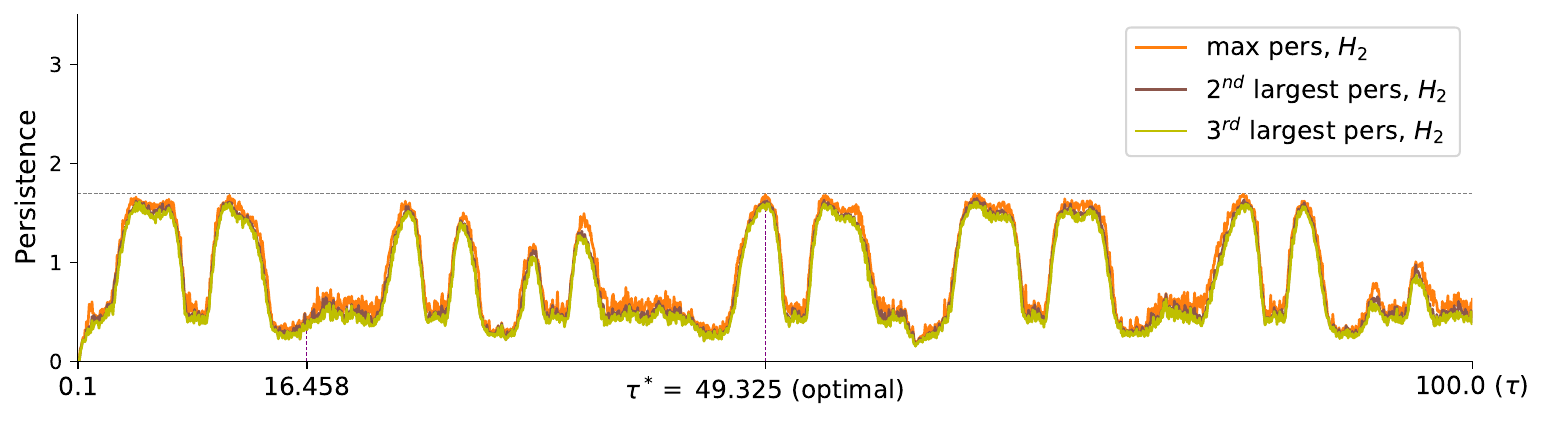}
		}
		\caption{Persistence of top 3 features in dimension 1 (top) and dimension 2 (bottom) as a function of $\tau$, for $SW_{d,\tau} f(T)$ and $d= 3$.}
		\label{fig:PersVsTau}
	\end{figure}
	
	The value $\tau = 49.325$ is optimal in the sense that it jointly maximizes the
	persistence of the top 3 features in both dimensions.
	More importantly, it is also optimal in that it is a global minimizer over $[0,100]$ for
	the function $\Gamma(x)$ (defined in Eq. (\ref{eqn: Gopt})) as described
	in Section \ref{subSection:tau}.
	We reiterate that the values $\<\kk,\omega\>$, $\kk\in \mathsf{supp}(\widehat{F}_K)$,
	needed to compute  $\tau $ as the minimizer of $\Gamma(x)$ can be estimated
	numerically as the frequency locations of the $d$ most prominent peaks in the spectrum of $f$.
	Indeed, Figure \ref{fig:fHat} shows the result of computing the Discrete
	Fourier Transform $\widehat{f}(\xi)$ of $f$ sampled at $t\in \widetilde{T}$,
	as well as the locations of the most prominent peaks in amplitude.
	\begin{figure}[!htb]
		\centering
		\includegraphics[width=\textwidth]{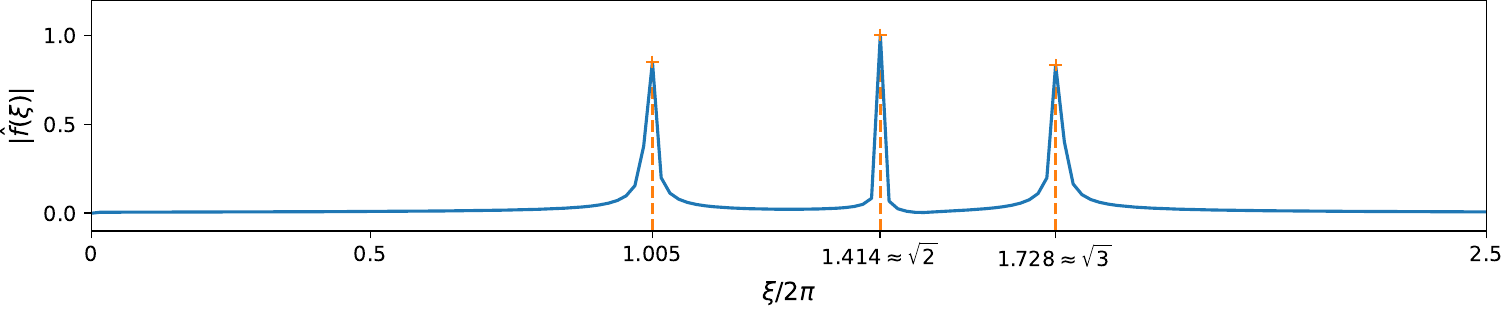}
		\caption{Modulus of the Discrete Fourier Transform for $f$ sampled at $t\in \widetilde{T}$. The locations of the most prominent peaks
			approximate the inner products  $\<\kk, \omega\>$.}\label{fig:fHat}
	\end{figure}
	
	An important thing to note is that the Discrete Fourier Transform  (DFT) by itself is known to provide only very
	rough approximations to the frequency locations of quasiperiodic functions.
	This can have   deleterious effects in the appropriate estimation of $\tau$ via minimization of $\Gamma(x)$.
	One solution is to use methods like \cite{gomez2010collocation, laskar1993frequency}, which
	leverage the DFT to produce high-accuracy frequency estimates.

	Finally, to illustrate the difference between the choices $d = \alpha$
	and $d = \alpha -1$ outlined in Remark \ref{rmk: anotherd},
	we repeat the same process above with  $d=2$.
	The persistence of the top 3 features in dimensions 1 and 2, as a function of $\tau$, is shown in Figure \ref{fig:PersVsTau_small} below.
	\begin{figure}[!htb]
		\centering
		\subfloat{ \includegraphics[width=\textwidth]{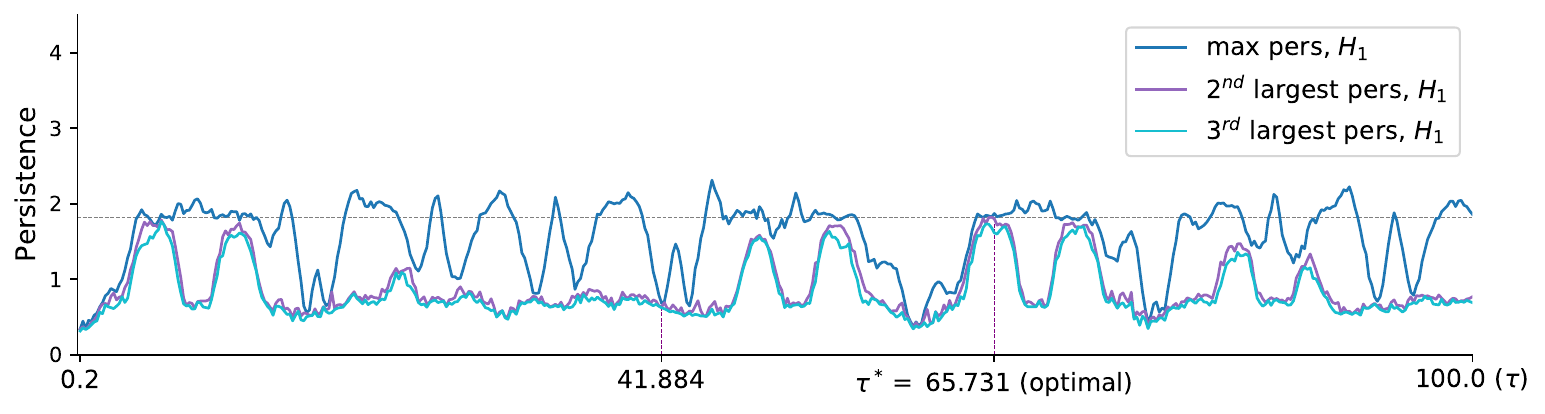}
		}
		
		\subfloat{
			\includegraphics[width=\textwidth]{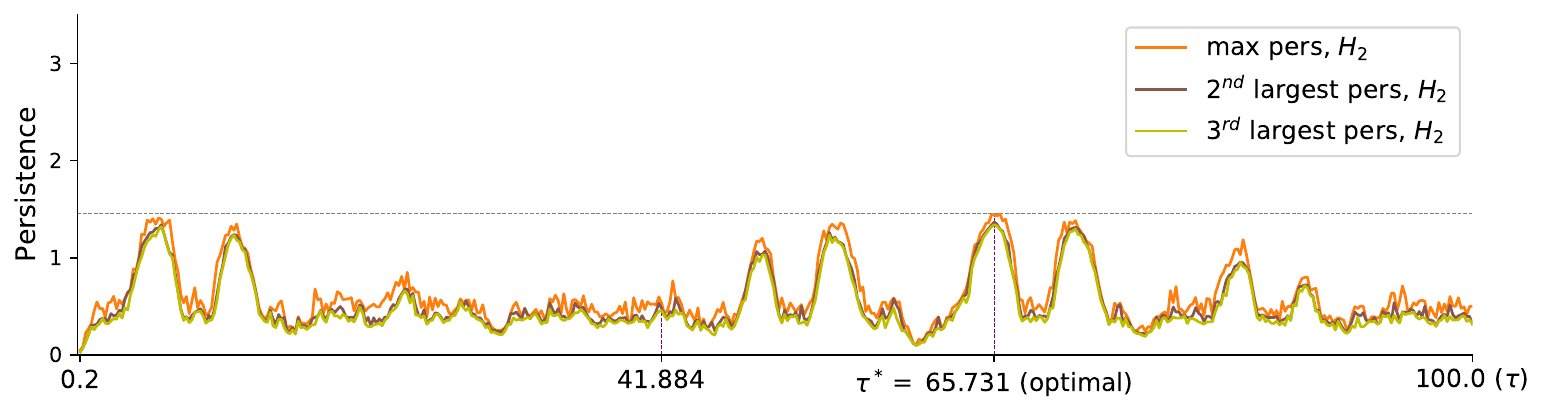}
		}
		\caption{Persistence of top 3 features in dimension 1 (top) and dimension 2 (bottom) as a function of   $\tau$, for $SW_{d,\tau} f(T)$ and $d= 2$.}
		\label{fig:PersVsTau_small}
	\end{figure}
	
	As Figure \ref{fig:PersVsTau_small} shows, the global minimizer $\tau = 65.731$ 
	of $\Gamma(x)$, $0 \leq x \leq 100$, jointly maximizes the top 3 persistence values in both dimensions.
	In particular, the underlying 3-torus topology is clearly captured by this choice of time delay.
	One thing to note  when comparing  Figure \ref{fig:PersVsTau} ($d = \alpha =3$) 
	and Figure \ref{fig:PersVsTau_small} ($d = \alpha - 1 = 2$)  
	is the number and nature of local maxima in persistence (specially in dimension 2) as a function of $\tau$.
	Indeed, $d = 3$ yields a larger number of stable local maxima; by stable 
	we mean that the values of persistence remain large in a neighborhood of a local 
	maximizer. 
	This suggests that while  $d = 2$ still captures the right underlying topology, 
	as Theorem \ref{thm: Structure} guarantees, 
	the embedding in $\C^3$ of the sliding window point cloud is nonlinear enough 
	that strong features in persistence (with the ambient Euclidean distance) occur for only very specific time delays.

\end{example}

\section{The Rips Persistent Homology of $\SW_{d,\tau}S_Kf$ and $\SW_{d,\tau}f$} \label{section: persistent homology}
We   now turn our attention to  the  persistent homology of the sliding window point clouds $\SW_{d,\tau}S_Kf$ and $\SW_{d,\tau}f$,
as well as their dependence on
both   the Fourier coefficients $\widehat{F}(\kk)$, and the parameters $K,d,\tau$.
Our aim is to establish bounds on the cardinality and persistence of
strong toroidal features in $\dgm_j^\mathcal{R}(\SW_{d,\tau} f)$.
To that end, let $K\in \N$ be so that
\[
\mathsf{supp}\left(\widehat{F}_K\right) = \{\kk_1, \ldots, \kk_\alpha\}
\;\;\; \;\;\;,\;\;\;\;\;\;
1 \leq \alpha \leq (1 + 2K)^N
\]
spans a $\Q$-vector space of dimension $N$ (Lemma \ref{lemma: lattice}).
We will further assume, after re-indexing if necessary, that  $\kk_1,\ldots, \kk_N$ are $\Q$-linearly independent  and that
\[
\vert \widehat{F}(\kk_1) \vert \geq \vert \widehat{F}(\kk_2) \vert \geq \cdots \geq \vert \widehat{F}(\kk_N)\vert > 0.
\]
With this convention, let
\begin{equation}\label{eqn: FourierTorus}
	\T^N_{\widehat{F}}
	:=
	\left\{\,
	\zz \in \C^N \; : \; \vert z_1\vert = \vert\widehat{F}(\kk_1)\vert,\ldots, \vert z_N\vert = \vert\widehat{F}(\kk_N)\vert \,
	\right\}
\end{equation}
regarded as a metric space  with the $\|\cdot\|_\infty$ distance:
\[
\|\zz - \zz'\|_\infty = \max \{\vert z_1 - z_1'\vert ,\ldots, \vert z_N - z_N' \vert\}.
\]

In order to understand the Rips persistent homology of
$\SW_{d,\tau} f$, the first step  is to clarify that of $\T^N_{\widehat{F}}$.
This involves two theorems that we  now describe.
The first is a result by Adams and Adamaszek \cite[Theorem 7.4]{vrcircle}
computing
the homotopy type
of $ R_\epsilon(S^1) $ at each scale $\epsilon > 0$.
\begin{prop}[\cite{vrcircle}]\label{prop: vrcircle}
	The Rips complex ${R}_{\epsilon}(S^1_r)$ of a circle $S^1_r \subset \C$ of radius $r$ (equipped with the Euclidean metric) is homotopy equivalent to $S^{2\ell+1}$ if and only if
	\begin{align*} r_\ell =
		2r \sin\left(\pi \frac{\ell}{2\ell+1}\right) \;< \;\epsilon \; \leq \; 2r\sin\left(\pi \frac{\ell+1}{2\ell+3}\right)  = r_{\ell+1}\;\;\; , \;\;\; \ell \in \N.
	\end{align*}
	Moreover, for all $\ell \in \N$ and $r_\ell < \epsilon \leq \epsilon' \leq r_{\ell +1}$, the inclusion $R_\epsilon(S^1_r) \hookrightarrow R_{\epsilon'}(S^1_r)$
	is a homotopy equivalence.
\end{prop}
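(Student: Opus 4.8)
This is the main result of Adamaszek and Adams \cite{vrcircle}, so in the paper one would cite it directly; here is how I would reconstruct the argument. \textbf{Step 1 (Euclidean $\to$ geodesic).} Two points of $S^1_r$ at angular distance $\theta\in[0,\pi]$ are at Euclidean distance $2r\sin(\theta/2)$, which is strictly increasing on $[0,\pi]$. Hence for $\epsilon<2r$ the Euclidean Rips complex $R_\epsilon(S^1_r)$ coincides, as an abstract simplicial complex, with the Rips complex of the geodesic unit circle at angular threshold $\theta_0=2\arcsin(\epsilon/2r)$; the stated thresholds $\epsilon=2r\sin(\pi\tfrac{\ell}{2\ell+1})$ correspond precisely to $\theta_0=\tfrac{2\pi\ell}{2\ell+1}$, i.e.\ to arc-fractions $\tfrac{\ell}{2\ell+1}$ of the circle, which are exactly the critical parameters in \cite{vrcircle}. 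Note that on the range asserted by the proposition one always has $\theta_0<\pi$.

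\textbf{Step 2 (combinatorial model).} For $\theta_0<\pi$, a finite subset $\sigma\subset S^1$ is a simplex iff all its pairwise angular distances are $<\theta_0$. This is a genuinely \emph{cyclic} condition: it does not reduce to ``$\sigma$ lies in an arc of width $<\theta_0$'' once $\theta_0$ exceeds a third of the circle, and that is exactly the source of the higher odd spheres. Restricting to $n$ equally spaced points identifies this complex with the clique complex of the cyclic graph $C_n^k$ (vertices $\Z/n$, edges between vertices at distance $\le k$) for $k/n\approx\theta_0/2\pi$, and one invokes the known homotopy classification $\mathrm{Cl}(C_n^k)\simeq S^{2\ell+1}$ when $\tfrac{\ell}{2\ell+1}\le \tfrac{k}{n}<\tfrac{\ell+1}{2\ell+3}$ (with wedges of spheres only at the boundary ratios), proved by discrete Morse theory / dismantling.

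\textbf{Step 3 (from finite samples to the circle).} For $\epsilon$ not a critical value I would show $R_\epsilon(S^1_r)$ is homotopy equivalent to $R_\epsilon(X_n)$ for a sufficiently fine equally spaced sample $X_n$ — either by an explicit deformation retraction of the infinite complex onto the finite one, or by realizing both as homotopy equivalent to the boundary of a common cyclic polytope $C(m,2\ell+2)$, whose boundary is $S^{2\ell+1}$. Combined with Step 2 this gives $R_\epsilon(S^1_r)\simeq S^{2\ell+1}$ throughout $(r_\ell,r_{\ell+1}]$, and the ``only if'' direction is then automatic since $S^{2\ell+1}\not\simeq S^{2\ell'+1}$ for $\ell\ne\ell'$. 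For the ``Moreover'' clause I would observe that within one stratum $r_\ell<\epsilon\le\epsilon'\le r_{\ell+1}$ the simplices created in passing from $R_\epsilon$ to $R_{\epsilon'}$ can be introduced by elementary expansions (equivalently, choose the equivalences of Step 3 to be natural in $\epsilon$), so the inclusion itself is a homotopy equivalence.

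\textbf{Main obstacle.} The delicate points are (i) the passage from the finite combinatorial facts to the \emph{infinite} simplicial complex $R_\epsilon(S^1_r)$, which does not follow formally and forces one either to control a limit or to argue directly on an infinite complex, and (ii) upgrading ``same homotopy type'' to ``the inclusion maps are homotopy equivalences,'' which requires keeping the chosen equivalences mutually compatible; the critical parameters $\epsilon=r_\ell$, where the homotopy type degenerates to a wedge of spheres, have to be treated separately.
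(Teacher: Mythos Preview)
The paper does not supply a proof of this proposition; it is quoted directly from \cite{vrcircle} and used as a black box, exactly as you anticipated in your opening sentence. Your reconstruction is a faithful high-level outline of the Adamaszek--Adams argument---reduction to the geodesic metric, the finite cyclic-graph model $\mathrm{Cl}(C_n^k)$, and the passage to the full circle---and you have correctly isolated the two technically substantial steps (the lift from finite samples to the infinite complex, and the naturality in $\epsilon$ needed for the ``Moreover'' clause).
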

As a consequence, the  Rips persistent homology of $(S^1_r, \vert\cdot\vert)$ is pointwise-finite---hence $\mathsf{q}$-tame---the resulting barcodes (and hence the persistence diagrams) are  singleton multisets in odd dimensions, and empty in positive even dimensions:
\begin{align}\label{eqn: RipsPersCircle}
	\bcd^\mathcal{R}_{j}\left(S^1_r, \vert\cdot\vert\right)
	=
	\begin{cases}
		\left\{
		\left(
		2r \sin\left(\pi \frac{\ell}{2\ell+1}\right) , 2r\sin\left(\pi \frac{\ell+1}{2\ell+3}\right)
		\right]
		\right\}
		&\textrm{ if } j = 2\ell+1\\[.2cm]
		\left\{
		\left(0,\infty\right)
		\right\}
		&\textrm{ if } j = 0\\[.2cm]
		\emptyset
		&\textrm{ if } j = 2\ell+2
	\end{cases}
\end{align}
The second result needed to describe $\dgm_j^\mathcal{R}(\T^N_{\widehat{F}}, \|\cdot\|_\infty)$ is a   K\"{u}nneth formula for Rips persistent homology and the maximum metric \cite[Corollary 4.6]{Kunneth}.

\begin{prop}[\cite{Kunneth}]\label{prop :RipsKunneth}
	Let $(X_1,\dd_1), \dots ,(X_N,\dd_N)$ be  metric spaces  with pointwise-finite Rips persistent homology.
	Then, for all $j\in \N$
	\[
	\bcd_j^\mathcal{R}(X_1\times \cdots \times X_N,\dd_\mathsf{max})
	=
	\left\{
	\bigcap_{n=1}^N I_{j_n}  \; \Big\vert \;
	I_{j_n} \in \bcd^\mathcal{R}_{j_n}(X_n,\dd_n) \;\;,\;
	\sum\limits_{n=1}^N j_n = j\;
	\right\}
	\]
	and thus
	\begin{align*}
		\dgm_j^\mathcal{R}&(X_1\times \cdots \times X_N,\dd_\mathsf{max})
		=\;\;\;
		\\ & \hspace{2cm}
		\left\{
		\left( \max_n a_n, \min_n b_n \right)  \; \Big\vert \;
		(a_n, b_n) \in \dgm^\mathcal{R}_{j_n}(X_n,\dd_n) \;\;,\;
		\sum\limits_{n=1}^N j_n = j\;
		\right\}
	\end{align*}
	where $\dd_{\mathsf{max}}(\xx, \xx') := \max\limits_{1\leq n \leq N} \dd_n(x_n, x'_n)$ is the maximum metric.
\end{prop}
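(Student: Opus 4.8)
The statement to be proved is Proposition~\ref{prop :RipsKunneth}, the persistent Künneth formula for the Rips filtration and the maximum metric. Since this is cited as \cite[Corollary 4.6]{Kunneth}, I would treat it as a black box in the paper and give only a pointer; but if a self-contained argument is wanted, here is the plan.

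\textbf{Overall approach.} The plan is to reduce the multi-factor case to the two-factor case by induction on $N$, so the heart of the matter is proving $\bcd_j^\mathcal{R}(X_1\times X_2,\dd_{\max}) = \{I_{j_1}\cap I_{j_2} \mid I_{j_n}\in \bcd_{j_n}^\mathcal{R}(X_n,\dd_n),\ j_1+j_2 = j\}$. The key structural observation is that the Rips complex factors: for the maximum metric, $\{(x_1,x_2),\ldots\}$ has $\dd_{\max}$-diameter $<\epsilon$ if and only if the projection to each coordinate has $\dd_n$-diameter $<\epsilon$, so $R_\epsilon(X_1\times X_2,\dd_{\max}) = R_\epsilon(X_1,\dd_1)\times R_\epsilon(X_2,\dd_2)$ as simplicial sets (the categorical product of simplicial complexes / simplicial sets), and these identifications are compatible with the inclusions $R_\epsilon\hookrightarrow R_{\epsilon'}$. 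Thus the persistence module $H_j(\mathcal{R}(X_1\times X_2,\dd_{\max});\F)$ is the degree-$j$ part of the homology of a levelwise product of simplicial sets.

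\textbf{Key steps in order.} First I would make the simplicial-product identification precise and check functoriality in $\epsilon$. Second, I would invoke the Eilenberg--Zilber theorem to get, for each fixed $\epsilon$, a natural chain homotopy equivalence $C_*(R_\epsilon(X_1))\otimes_\F C_*(R_\epsilon(X_2)) \simeq C_*(R_\epsilon(X_1)\times R_\epsilon(X_2))$, naturality being with respect to the inclusion maps so that it assembles into an isomorphism of persistence modules after taking homology. Third, since $\F$ is a field, the algebraic Künneth theorem gives $H_j(R_\epsilon(X_1)\times R_\epsilon(X_2);\F)\cong \bigoplus_{j_1+j_2=j} H_{j_1}(R_\epsilon(X_1);\F)\otimes_\F H_{j_2}(R_\epsilon(X_2);\F)$, again naturally in $\epsilon$; hence at the level of persistence modules $H_j(\mathcal{R}(X_1\times X_2,\dd_{\max}))\cong \bigoplus_{j_1+j_2=j} H_{j_1}(\mathcal{R}(X_1))\otimes_\F H_{j_2}(\mathcal{R}(X_2))$, where the tensor product is taken pointwise. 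Fourth, I would record the elementary fact that the pointwise tensor product of an interval module $\F_{I}$ with an interval module $\F_{J}$ is the interval module $\F_{I\cap J}$ (zero if $I\cap J=\emptyset$), and that tensor product distributes over the direct-sum (barcode) decompositions, which are available here because pointwise-finiteness of each factor passes to the product via the Künneth isomorphism. Combining the last two steps yields the barcode formula; reading off endpoints $(\max_n a_n,\min_n b_n)$ of $\bigcap_n I_{j_n}$ (with the convention that the intersection is empty, hence contributes nothing, when $\max_n a_n \geq \min_n b_n$) gives the persistence-diagram statement. Finally, I would run the induction on $N$, using associativity of $\dd_{\max}$ and of the simplicial product to pass from $N-1$ to $N$ factors.

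\textbf{Main obstacle.} The main technical point is the naturality bookkeeping: one must ensure that the Eilenberg--Zilber equivalence and the Künneth isomorphism are natural in $\epsilon$ simultaneously in both factors, so that the resulting isomorphism is an isomorphism of $\R_{\geq 0}$-indexed persistence modules and not merely a pointwise isomorphism. A secondary subtlety is that the simplicial product $R_\epsilon(X_1)\times R_\epsilon(X_2)$ is a simplicial set (or one must pass to a simplicial-complex model such as the tensor/categorical product), not literally a simplicial complex, so one should either work with simplicial-set homology throughout or cite the standard comparison; this is routine but worth stating carefully. Everything else---the diameter-factorization identity and the interval-module tensor computation---is elementary. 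Given that the paper attributes this to \cite[Corollary 4.6]{Kunneth}, the cleanest presentation is simply to cite it and, if desired, sketch the above as motivation.
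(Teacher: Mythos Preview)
The paper does not prove this proposition at all: it is stated with the attribution \cite[Corollary 4.6]{Kunneth} and used as a black box, exactly as you anticipated in your first paragraph. Your sketch---factorization of the Rips complex under $\dd_{\max}$, Eilenberg--Zilber plus algebraic K\"unneth over a field to get a natural-in-$\epsilon$ isomorphism of persistence modules, then the interval-module computation $\F_I\otimes \F_J\cong \F_{I\cap J}$ and induction on $N$---is the standard route and is correct; the two subtleties you flag (naturality in $\epsilon$ and the simplicial-set versus simplicial-complex product) are the right ones to watch.
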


These results combined yield the following:

\begin{lemma}\label{lemma:PersTorus}
	If $a < \sqrt{3}\vert\widehat{F}(\kk_N)\vert$, then
	\[
	(a,b) \in \dgm^\mathcal{R}_j(\T^N_{\widehat{F}}, \|\cdot\|_\infty)
	\;\;\;\; , \;\;\;\;
	1\leq j \leq N
	\]
	if and only if $a = 0$ and  $b = \sqrt{3}\vert\widehat{F}(\kk_n)\vert$
	for some $1\leq n \leq N$.
	
	Moreover, if $1\leq n \leq N$, and
	$ 1\leq  n_1 <\cdots < n_\ell  \leq N$
	is the longest  sequence (i.e., largest $\ell$) for which
	\[
	\vert\widehat{F}(\kk_n)\vert =  \vert\widehat{F}(\kk_{n_1})\vert = \cdots= \vert\widehat{F}(\kk_{n_\ell})\vert
	\]
	then  $\left(0, \sqrt{3}\vert\widehat{F}(\kk_n)\vert\right)$
	appears in $\dgm^\mathcal{R}_j(\T^N_{\widehat{F}}, \|\cdot\|_\infty)$ with
	multiplicity
	\[
	\mu_j(n) :=
	{n_1 -1  \choose j -1} + \cdots + {n_\ell -1   \choose j -1}.\]
\end{lemma}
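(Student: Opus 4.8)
The plan is to exploit the evident product structure of $\T^N_{\widehat F}$. Writing $r_n := |\widehat F(\kk_n)|$, the metric space $\big(\T^N_{\widehat F},\, \|\cdot\|_\infty\big)$ is, via the identity map, isometric to the metric product $\big(S^1_{r_1}\times\cdots\times S^1_{r_N},\, \dd_\mathsf{max}\big)$ of Euclidean circles of radii $r_1\ge r_2\ge\cdots\ge r_N>0$; in particular it is compact. Each factor has pointwise-finite Rips persistent homology (Proposition~\ref{prop: vrcircle} and Eq.~(\ref{eqn: RipsPersCircle})), so the persistent K\"unneth formula (Proposition~\ref{prop :RipsKunneth}) applies and gives
\[
\dgm_j^\mathcal{R}\big(\T^N_{\widehat F},\, \|\cdot\|_\infty\big)
=
\Big\{\, \big(\max_n a_n,\ \min_n b_n\big)\ \Big|\ (a_n,b_n)\in\dgm^\mathcal{R}_{j_n}(S^1_{r_n}),\ \sum_n j_n=j \,\Big\}.
\]
From here the argument is a finite accounting over the circle diagrams recorded in Eq.~(\ref{eqn: RipsPersCircle}).

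The facts I need about a single factor are: (i) $\dgm^\mathcal{R}_{j_n}(S^1_{r_n})$ is empty unless $j_n=0$ (the pair $(0,\infty)$) or $j_n$ is odd; and (ii) for $j_n=2\ell+1$ the unique pair is born at $2r_n\sin\big(\pi\ell/(2\ell+1)\big)$, hence at $0$ when $\ell=0$ --- giving the pair $\big(0,\,2r_n\sin(\pi/3)\big)=\big(0,\sqrt3\,r_n\big)$ --- and at a value $\ge 2r_n\sin(\pi/3)=\sqrt3\,r_n\ge\sqrt3\,r_N$ when $\ell\ge1$, since $\ell\mapsto\pi\ell/(2\ell+1)$ increases into $[\pi/3,\pi/2)$, $\sin$ is increasing there, and $r_N=\min_m r_m$. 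Now take a K\"unneth term producing a pair $(a,b)$ with $a<\sqrt3\,r_N$. A summand with $j_n\ge3$ would force $a=\max_m a_m\ge a_n\ge\sqrt3\,r_N$, a contradiction, while a summand with $j_n$ even and positive produces no term at all by (i); hence every $j_n\in\{0,1\}$. Therefore $a=0$, and setting $S:=\{n:j_n=1\}$ we have $|S|=j$, the $j_n=0$ factors contribute only the harmless death $\infty$, and $b=\min_{n\in S}\sqrt3\,r_n$; since $j\ge1$, $S\neq\emptyset$ and $b<\infty$, and by monotonicity $r_1\ge\cdots\ge r_N$ we get $b=\sqrt3\,r_{\max S}$. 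This proves both directions of the first assertion: the part of $\dgm^\mathcal{R}_j(\T^N_{\widehat F},\|\cdot\|_\infty)$ with birth strictly below $\sqrt3\,r_N$ consists exactly of pairs $\big(0,\sqrt3\,r_n\big)=\big(0,\sqrt3|\widehat F(\kk_n)|\big)$, $1\le j\le N$.

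For the multiplicity, observe first that any K\"unneth term involving a factor of degree $\ge3$ has strictly positive birth, so the birth-$0$ pairs arise precisely from the terms with all $j_n\in\{0,1\}$; these are in bijection with the $j$-subsets $S\subseteq\{1,\dots,N\}$ (the positions where $j_n=1$), each producing the pair $\big(0,\sqrt3\,r_{\max S}\big)$. Fixing $n$ and letting $\{n_1<\cdots<n_\ell\}$ be the full set of indices $m$ with $|\widehat F(\kk_m)|=|\widehat F(\kk_n)|$, the pair $\big(0,\sqrt3\,r_n\big)$ thus occurs with multiplicity equal to the number of $j$-subsets $S$ with $\max S\in\{n_1,\dots,n_\ell\}$; since there are ${n_i-1\choose j-1}$ subsets of size $j$ with prescribed maximum $n_i$ (choose the remaining $j-1$ elements from $\{1,\dots,n_i-1\}$), summing over $i$ yields $\mu_j(n)={n_1-1\choose j-1}+\cdots+{n_\ell-1\choose j-1}$. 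Here one must keep in mind that the K\"unneth output is a \emph{multiset}, so distinct indices $n_i$ sharing a Fourier magnitude contribute \emph{additively} rather than collapsing; note also $\mu_j(n)>0$ iff $n_\ell\ge j$, consistent with $\big(0,\sqrt3\,r_n\big)$ dropping out of $\dgm^\mathcal{R}_j$ once $j$ exceeds the largest index realizing that magnitude. The step I expect to require the most care is (ii) --- ruling out any circle bar of degree $\ge3$ from being born strictly below the threshold $\sqrt3\,r_N$ --- which reduces to the elementary estimate $\sin(\pi\ell/(2\ell+1))\ge\sin(\pi/3)$ for $\ell\ge1$; everything else is a routine unwinding of Eq.~(\ref{eqn: RipsPersCircle}) through Proposition~\ref{prop :RipsKunneth}.
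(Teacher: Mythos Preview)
Your proof is correct and follows essentially the same route as the paper: apply the persistent K\"unneth formula to the product of circles, use the Adams--Adamaszek description of $\dgm^\mathcal{R}_\bullet(S^1_r)$ to force every $j_n\in\{0,1\}$ under the hypothesis $a<\sqrt{3}\,r_N$, and then count the $j$-subsets $S\subseteq\{1,\dots,N\}$ with prescribed maximum to obtain the multiplicity. Your write-up is in fact a bit more explicit than the paper's in justifying why degrees $j_n\ge 3$ are excluded (via the monotonicity estimate $\sin(\pi\ell/(2\ell+1))\ge\sin(\pi/3)$ for $\ell\ge1$), and your subset formulation of the multiplicity count is essentially a relabeling of the paper's index sets $\mathcal{M}(n_i)$.
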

\begin{proof}
	By Proposition \ref{prop :RipsKunneth},
	we have that
	$(a,b) \in \dgm_j^\mathcal{R}(\T^N_{\widehat{F}}, \|\cdot\|_\infty)$, $1\leq j \leq N$,
	if and only if there exist integers
	$0 \leq j_1,\ldots, j_N \leq j$ and
	\[
	(a_n,b_n) \in \dgm_{j_n}^\mathcal{R}(S^1_{\vert\widehat{F}(\kk_n)\vert}
	\,,\, \vert\cdot \vert\,)
	\;\;\; , \;\;\; 1\leq n \leq N
	\]
	so that $j_1 + \cdots + j_N = j$, $a = \max\{a_1,\ldots , a_N\}$ and
	$b = \min\{b_1,\ldots b_N\}$.
	Assume without loss of generality that $a = a_1$.
	
	If
	\[
	a_1 < \sqrt{3}\vert\widehat{F}(\kk_N)\vert \leq \sqrt{3}\vert\widehat{F}(\kk_1)\vert
	\]
	then Eq. (\ref{eqn: RipsPersCircle}) implies that
	$a_1 = 0 $ (hence $a_1 = \cdots = a_N = 0$) and therefore
	\[
	b_n =
	\left\{
	\begin{array}{ccl}
		\infty & \hbox{if} & j_n = 0 \\[.2cm]
		\sqrt{3}\vert\widehat{F}(\kk_n)\vert & \hbox{if} & j_n = 1
	\end{array}
	\right.
	\;\;\;,\;\;\; \mbox{for all} \;\;\;
	1\leq n \leq N.
	\] The first part of the lemma readily follows from this and $j \geq 1$.
	
	Let us now address the multiplicity computation.
	We will do so by counting the number of distinct copies of
	$ \left(0 , \sqrt{3} \vert\widehat{F}(\kk_n)\vert\right)$ contributed
	to $\dgm_j^\mathcal{R}(\T^N_{\widehat{F}}, \|\cdot\|_\infty)$
	by
	each index $1\leq n_1 < \cdots < n_\ell \leq N$.
	Indeed,  start with $n_1$ and assume
	$1 \leq j \leq n_1$.
	Then, each choice of $j-1$ integers
	$ 1 \leq m_1 < \cdots < m_{j-1} < n_{1}  $
	yields a set of indices
	\[
	\mathcal{M}(n_1) = \{m_1,\ldots, m_{j-1}, n_1\}
	\]
	parametrizing
	a unique way of writing $ \left(0 , \sqrt{3} \vert\widehat{F}(\kk_n)\vert\right)$ as
	$\left(\max\limits_m a_m \;,\; \min\limits_m b_m\right)$ for
	\[
	(a_m , b_m) =
	\left\{
	\begin{array}{ccl}
		\left(0, \sqrt{3}\vert\widehat{F}(\kk_m)\vert \right)& \hbox{if} & m \in \mathcal{M}(n_1) \\[.2cm]
		(0, \infty) & \hbox{if} & m \in \{1,\ldots ,N\} \smallsetminus \mathcal{M}(n_1).
	\end{array}
	\right.
	\]
	Since there are ${n_1 - 1 \choose j -1}$ ways of choosing $\mathcal{M}(n_1)$,  the sets
	$\mathcal{M}(n_1), \ldots, \mathcal{M}(n_\ell) $ are all distinct,
	and this computation
	accounts for all copies, then this completes the proof.
\end{proof}

\begin{corollary}\label{Corollary: SurjectiveTorus}
	If $0 < \delta \leq \epsilon < \sqrt{3}\vert\widehat{F}(\kk_N)\vert$ and $1\leq j \leq N$,
	then the homomorphism
	\[ \iota_* :
	H_j\left(R_\delta(\T^N_{\widehat{F}}, \|\cdot\|_\infty); \F\right)
	\longrightarrow
	H_j\left(R_\epsilon(\T^N_{\widehat{F}}, \|\cdot\|_\infty); \F\right)
	\]
	induced by the
	inclusion $\iota: R_\delta(\T^N_{\widehat{F}}, \|\cdot\|_\infty)  \hookrightarrow R_\epsilon(\T^N_{\widehat{F}}, \|\cdot\|_\infty) $,
	is surjective.
\end{corollary}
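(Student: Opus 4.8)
The plan is to deduce the surjectivity of $\iota_*$ from an equality of persistent Betti numbers. Since $\T^N_{\widehat{F}}$ is compact, Proposition~\ref{prop :RipsKunneth} together with the pointwise-finiteness recorded in Eq.~(\ref{eqn: RipsPersCircle}) shows that $H_j\big(\mathcal{R}(\T^N_{\widehat{F}},\|\cdot\|_\infty);\F\big)$ is pointwise-finite; in particular the spaces $H_j\big(R_\delta(\T^N_{\widehat{F}},\|\cdot\|_\infty);\F\big)$ and $H_j\big(R_\epsilon(\T^N_{\widehat{F}},\|\cdot\|_\infty);\F\big)$ are finite-dimensional, $\mathsf{rank}(\iota_*) = \beta_j^{\delta,\epsilon}$ and $\mathsf{dim}_\F H_j\big(R_\epsilon(\T^N_{\widehat{F}},\|\cdot\|_\infty);\F\big) = \beta_j^{\epsilon,\epsilon}$ in the notation of Eq.~(\ref{eqn: PerBettiNum}). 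As $\mathsf{Img}(\iota_*) \subseteq H_j\big(R_\epsilon(\T^N_{\widehat{F}},\|\cdot\|_\infty);\F\big)$, surjectivity of $\iota_*$ is therefore equivalent to $\beta_j^{\delta,\epsilon} = \beta_j^{\epsilon,\epsilon}$. By Eq.~(\ref{eqn: BettiCount}) these two numbers count, respectively, the bars $I \in \bcd_j^\mathcal{R}(\T^N_{\widehat{F}},\|\cdot\|_\infty)$ with $[\delta,\epsilon] \subset I$ and those with $\epsilon \in I$; since $\epsilon \in [\delta,\epsilon]$, the former multiset is contained in the latter, so it suffices to establish the reverse inclusion.

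For that I would fix a bar $I$ with $\epsilon \in I$ and let $(a,b) \in \dgm_j^\mathcal{R}(\T^N_{\widehat{F}},\|\cdot\|_\infty)$ be its pair of endpoints, so that $a = \inf I \le \epsilon < \sqrt{3}\,|\widehat{F}(\kk_N)|$. Lemma~\ref{lemma:PersTorus} then forces $a = 0$ (and in fact $b = \sqrt{3}\,|\widehat{F}(\kk_n)|$ for some $1\le n \le N$). Hence $\inf I = 0 < \delta \le \epsilon \le \sup I$, and since $I$ is an interval containing $\epsilon$, it contains every point lying between $\inf I$ and $\epsilon$; in particular $\delta \in I$, so $[\delta,\epsilon] \subset I$. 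This yields the reverse inclusion of multisets, hence $\beta_j^{\delta,\epsilon} = \beta_j^{\epsilon,\epsilon}$ and the surjectivity of $\iota_*$. The case $\delta = \epsilon$ is immediate, since then $\iota_*$ is the identity.

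The only delicate point I anticipate is the bookkeeping behind the step "$a = \inf I \le \epsilon < \sqrt{3}\,|\widehat{F}(\kk_N)|$ forces $a = 0$": one must be confident that Lemma~\ref{lemma:PersTorus} genuinely covers every bar of $\bcd_j^\mathcal{R}(\T^N_{\widehat{F}},\|\cdot\|_\infty)$, $1\le j\le N$, that meets the window $\big(0,\sqrt{3}\,|\widehat{F}(\kk_N)|\big)$, i.e.\ that no such bar is born at or after $\sqrt{3}\,|\widehat{F}(\kk_N)|$ while still containing $\epsilon$ --- a bar born that late would, via Proposition~\ref{prop :RipsKunneth}, have to involve a factor $R_\epsilon\big(S^1_{|\widehat{F}(\kk_n)|}\big)$ homotopy equivalent to a sphere of dimension at least $3$, which by Eq.~(\ref{eqn: RipsPersCircle}) is born no earlier than $\sqrt{3}\,|\widehat{F}(\kk_n)| \ge \sqrt{3}\,|\widehat{F}(\kk_N)|$ and hence cannot contain $\epsilon$. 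Once this observation is in place, together with the explicit description of the surviving bars from Lemma~\ref{lemma:PersTorus}, the argument is complete.
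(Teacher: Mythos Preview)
Your argument is correct. The paper states Corollary~\ref{Corollary: SurjectiveTorus} without proof, as an immediate consequence of Lemma~\ref{lemma:PersTorus}; your proposal spells out the natural deduction via persistent Betti numbers and the barcode description, which is exactly the intended route.

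One minor remark: the ``delicate point'' you flag in the last paragraph is already handled by the if-and-only-if formulation of Lemma~\ref{lemma:PersTorus}. That lemma characterizes \emph{all} pairs $(a,b)\in\dgm_j^\mathcal{R}(\T^N_{\widehat{F}},\|\cdot\|_\infty)$ with $a<\sqrt{3}\,|\widehat{F}(\kk_N)|$, so once you know $a=\inf I\le\epsilon<\sqrt{3}\,|\widehat{F}(\kk_N)|$ there is nothing further to check. Your supplementary argument there is sound but redundant.
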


Let
\[
P : \C^{\alpha} \longrightarrow \C^N
\]
be the projection onto the first $N$-coordinates.
The first thing to note is that
$P(\X_{K,f}) \subset \T^N_{\widehat{F}}$ (see Eq. (\ref{eqn: big_XKf})),
and since
\[
\|P(\zz) - P(\zz') \|_\infty \;\leq \; \|P(\zz) - P(\zz')\|_2 \;\leq\;  \|\zz - \zz'\|_2
\]
for every $\zz,\zz' \in \C^\alpha$,
then $P$ induces
simplicial maps at the level of Rips complexes
\[
\begin{array}{rccl}
	R_\epsilon(P) : &
	R_\epsilon(\X_{K,f} , \|\cdot \|_2)& \longrightarrow &R_{\epsilon}(\T^N_{\widehat{F}}, \|\cdot\|_\infty) \\
	& \sigma & \mapsto & P(\sigma)
\end{array}
\]
for every $\epsilon > 0$.
The idea now is to use $\mathcal{R}(P) = \{R_\epsilon(P)\mid \epsilon > 0\}$
in order to derive insights about the persistent homology of $\mathcal{R}(\X_{K,f}, \|\cdot\|_2)$
from that of $\mathcal{R}(\T^N_{\widehat{F}}, \|\cdot\|_\infty)$.

We have the following,
\begin{lemma}\label{lemma:SurjectiveP}
	For all $0 < \epsilon  < \sqrt{3}\vert\widehat{F}(\kk_N)\vert$ and $1\leq j \leq N $, the homomorphism
	\[
	R_\epsilon(P)_* : H_j(R_\epsilon(\X_{K,f}, \|\cdot\|_2) ;\F) \longrightarrow H_j(R_\epsilon(\T^N_{\widehat{F}},\|\cdot\|_\infty) ;\F)
	\]
	is surjective.
\end{lemma}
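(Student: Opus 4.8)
The plan is to exhibit, for each $1\le j\le N$ and each $0<\epsilon<\sqrt{3}|\widehat{F}(\kk_N)|$, an explicit right inverse (at the level of homology) of $R_\epsilon(P)_*$. Concretely, I would build a simplicial section $s_\epsilon : R_\epsilon(\T^N_{\widehat{F}},\|\cdot\|_\infty) \to R_\epsilon(\X_{K,f},\|\cdot\|_2)$ — or at least one defined on a subcomplex carrying all the $j$-dimensional homology — and show $R_\epsilon(P)\circ s_\epsilon$ is (contiguous to) the identity, so that $R_\epsilon(P)_*\circ (s_\epsilon)_* = \mathrm{id}$, forcing surjectivity. The natural candidate for $s_\epsilon$ comes from the parametrization: both $\X_{K,f}$ and $\T^N_{\widehat{F}}$ are, up to closure, images of $\R$ (resp. $\T^N$) under continuous maps, with $P\circ x_{K,f}(t) = (\widehat{F}(\kk_1)e^{i\<\kk_1,\omega\>t},\ldots,\widehat{F}(\kk_N)e^{i\<\kk_N,\omega\>t})$, and by Corollary~\ref{Corollary: Kronecker} (via $\Q$-linear independence of $\kk_1,\ldots,\kk_N$) this traces a dense copy of the standard $N$-torus inside $\T^N_{\widehat{F}}$. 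So the section should send a point of $\T^N_{\widehat{F}}$ back to a nearby point of $\X_{K,f}$ lying over it.

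The key steps, in order: (1) Invoke Lemma~\ref{lemma:PersTorus}/Corollary~\ref{Corollary: SurjectiveTorus} to reduce to producing classes in $H_j(R_\epsilon(\X_{K,f},\|\cdot\|_2);\F)$ mapping onto the generators $\big(0,\sqrt{3}|\widehat{F}(\kk_n)|\big)$; since those generators live at arbitrarily small scale $\delta$ and propagate forward, it suffices to work at small $\epsilon$, where the geometry of the dense torus is well controlled. (2) Recall from the proof of Theorem~\ref{thm: Structure} that $\overline{\X_{K,f}}$ is literally a linearly embedded $N$-torus $T$ in $\C^\alpha$, obtained as $\Omega'\cdot(\text{standard }\T^N)$ for an injective linear map, and $P$ restricted to $T$ is a linear map onto $\T^N_{\widehat{F}}$ whose kernel meets $T$ only in $\{0\}$ after quotient — in fact $P|_T : T \to \T^N_{\widehat{F}}$ is a homeomorphism onto its image. (3) Use the Rips–complex comparison for the maximum metric versus the Euclidean metric together with the density of $\X_{K,f}$ in $T$ and of $P(\X_{K,f})$ in $\T^N_{\widehat{F}}$: for $\epsilon$ small, the inclusions $\X_{K,f}\hookrightarrow T$ and $P(\X_{K,f})\hookrightarrow \T^N_{\widehat{F}}$ induce isomorphisms on Rips persistent homology in the relevant range (totally bounded, dense subsets have the same q-tame Rips persistence — this is the interleaving/stability argument from Section~\ref{section: Background persHom} combined with $\dd_{GH}(\X_{K,f},\overline{\X_{K,f}})=0$). (4) Finally, on the compact tori, $P|_T$ is a homeomorphism and hence induces an isomorphism of Rips filtrations $\mathcal{R}(T,\|\cdot\|_?)$ — here one must be careful which metric $T$ carries — so the induced map on persistent homology is an isomorphism; chasing the commuting square of inclusions and $P$ then gives surjectivity of $R_\epsilon(P)_*$ on the dense subsets.

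There is a subtlety worth flagging: $P$ is $1$-Lipschitz only for the mixed pair $(\|\cdot\|_2 \text{ on } \C^\alpha,\ \|\cdot\|_\infty \text{ on } \C^N)$, so the comparison in step (4) is not between two copies of the same Rips filtration but involves the standard $\|\cdot\|_2$-to-$\|\cdot\|_\infty$ interleaving on $\C^N$; since these norms are bi-Lipschitz equivalent with constants depending only on $N$ (not on $\epsilon$), this only rescales scales and does not affect surjectivity of the induced maps, but it has to be said cleanly. The \textbf{main obstacle} I anticipate is step (3)–(4): making precise that a homeomorphism $P|_T$ of tori, which is \emph{not} an isometry for the relevant metrics, nonetheless induces a surjection (indeed isomorphism) on Rips persistent homology in the range $\epsilon<\sqrt{3}|\widehat{F}(\kk_N)|$. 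The clean way is probably to avoid homeomorphism-invariance of Rips homology altogether and instead argue directly with the simplicial map $R_\epsilon(P)$: pick generators of $H_j(R_\epsilon(\T^N_{\widehat{F}}))$ represented by small cycles supported on finitely many points of $P(\X_{K,f})$ (possible by Lemma~\ref{lemma:PersTorus}, density, and q-tameness), lift each vertex along $x_{K,f}$ to a point of $\X_{K,f}$ with nearly the same $t$-parameter, check that the Euclidean diameters of the lifted simplices stay below $\epsilon$ for $\epsilon$ in the allowed range (using that $\Omega_{K,f}$ is bounded, $\|\Omega_{K,f}\|\le \sqrt{(d+1)\alpha}\max|\widehat{F}(\kk_i)|^{-1}\cdot(\ldots)$, or more simply that $x_{K,f}$ is Lipschitz in $t$ and the lift can be chosen $t$-close), and verify $R_\epsilon(P)$ sends the lifted cycle to the original one up to contiguity. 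Proving the diameter bound on lifts is the one genuine computation, but it is controlled by the same Vandermonde/$\sigma_{\min}$ data already in play in the statement of the main Theorem, so it should go through with the hypothesis $\epsilon<\sqrt3|\widehat F(\kk_N)|$ doing the work.
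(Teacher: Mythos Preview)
Your outline is essentially the paper's argument: establish that $P$ restricts to a homeomorphism $\overline{\X_{K,f}}\to\T^N_{\widehat F}$, build a near-section $\nu:\T^N_{\widehat F}\to\X_{K,f}$, show that $R_\epsilon(P)\circ R(\nu)$ is contiguous to the inclusion $R_{\delta}(\T^N_{\widehat F})\hookrightarrow R_{\epsilon}(\T^N_{\widehat F})$ for some $\delta\le\epsilon$, and conclude from Corollary~\ref{Corollary: SurjectiveTorus}. So the strategy is correct and matches the paper.

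Where you go astray is in the mechanism you propose for the ``one genuine computation'', i.e.\ bounding the $\|\cdot\|_2$-diameter of lifted simplices. Your two suggestions---control via ``Lipschitz in $t$ and the lift can be chosen $t$-close'' or via ``Vandermonde/$\sigma_{\min}$ data''---are both off target. Nearby points of $\T^N_{\widehat F}$ need \emph{not} arise from nearby values of $t$ (that is precisely the content of Kronecker density), so $t$-Lipschitz estimates do not apply; and $\Omega_{K,f}$ mediates between $\X_{K,f}$ and $\SW_{d,\tau}S_Kf$, not between $\X_{K,f}$ and $\T^N_{\widehat F}$, so $\sigma_{\min}$ is irrelevant here. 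The paper's resolution is softer and cleaner: since $P:\overline{\X_{K,f}}\to\T^N_{\widehat F}$ is a homeomorphism of compacta, $P^{-1}$ is \emph{uniformly continuous}, so there is $\delta_\epsilon\in(0,\epsilon]$ with $\|P(\xx)-P(\xx')\|_\infty<\delta_\epsilon\Rightarrow\|\xx-\xx'\|_2<\epsilon$. One then picks $\nu(\zz)\in\X_{K,f}$ with $\|P(\nu(\zz))-\zz\|_\infty<\delta_\epsilon/4$ by density, obtains a simplicial map $R(\nu):R_{\delta_\epsilon/2}(\T^N_{\widehat F})\to R_\epsilon(\X_{K,f})$, and a triangle-inequality check gives the contiguity. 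No explicit constants enter; the hypothesis $\epsilon<\sqrt{3}|\widehat F(\kk_N)|$ is used only through Corollary~\ref{Corollary: SurjectiveTorus}.
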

\begin{proof}
	The case $N=1$ is essentially Theorem 6.8 in \cite{sw1pers},
	so assume $N\geq 2$.
	
	Our first claim is that  the projection  $P : (\C^\alpha, \|\cdot \|_2) \longrightarrow (\C^N, \|\cdot\|_\infty)$
	restricts to a homeomorphism
	\[
	P\,:\,
	\,\overline{\X_{K,f}}\,
	\xrightarrow{\hspace{.25cm}\cong\hspace{.25cm}}
	\,\T^N_{\widehat{F}}
	\]
	Indeed, surjectivity is   the content of Kronecker's Theorem (\ref{Kroneckermulti}),
	so in order to check injectivity,
	assume that   $\xx, \xx' \in \X_{K,f}$ have $P(\xx) = P(\xx')$, and write
	\[
	\xx =
	\begin{bmatrix}
		\widehat{F}(\kk_1)e^{i\<\kk_1,\omega\>t} \\
		\vdots \\
		\widehat{F}(\kk_\alpha)e^{i\<\kk_\alpha,\omega\>t}
	\end{bmatrix}
	\;\;\; , \;\;\;
	\xx' =
	\begin{bmatrix}
		\widehat{F}(\kk_1)e^{i\<\kk_1,\omega\>t'} \\
		\vdots \\
		\widehat{F}(\kk_\alpha)e^{i\<\kk_\alpha,\omega\>t'}
	\end{bmatrix}
	\]
	for $t,t' \in \R$.
	Since $P(\xx)= P(\xx')$ and $\widehat{F}(\kk_r) \neq 0$ for $r=1,\ldots, \alpha$,
	then there exist $m_1,m_2 \in \Z$ for which
	\begin{eqnarray*}
		\<\kk_1,\omega\>(t - t') &=& 2\pi m_1 \\
		\<\kk_2, \omega\>(t - t') &=& 2\pi m_2
	\end{eqnarray*}
	If $t\neq t'$, then we would have that
	\[
	\<m_2\kk_1 - m_1\kk_2 , \omega\> = 0
	\]
	contradicting either the incommensurability of $\omega $, or the $\Q$-linear independence of the vectors $\kk_1,\kk_2,\ldots, \kk_N$.
	Thus $t = t'$, showing that $P$ is injective on $\X_{K,f}$, and continuity plus Hausdorffness improves this to injectivity on  $\overline{\X_{K,f}}$.
	Finally, since $P$ provides a continuous bijection between $\overline{\X_{K,f}}$ and $\T^N_{\widehat{F}}$,
	and the former is compact (since it is closed and bounded), then $P$ yields the desired homeomorphism.
	
	Now, given $\epsilon > 0$,  let $ 0 < \delta_\epsilon \leq \epsilon$ be so that
	\begin{equation}\label{eqn: UniformCont}
		\|P(\xx) - P(\xx')\|_\infty < \delta_\epsilon \;\;\; \mbox{ always implies } \;\;\; \|\xx - \xx'\|_2 < \epsilon
	\end{equation}
	for $\xx,\xx' \in \X_{K,f}$.
	The existence of $\delta_\epsilon> 0$ follows from the uniform continuity of
	$P^{-1} : \T^N_{\widehat{F}} \longrightarrow \overline{\X_{K,f}}$,
	and replacing $\delta_\epsilon$ with $\min\{\delta_\epsilon,\epsilon\}$ if necessary.
	Density of $\X_{K,f}$ in $\overline{\X_{K,f}}$ implies that
	for each $\zz \in \T^N_{\widehat{F}}$ there is $\xx_\zz \in \X_{K,f}$  so that
	$\|P(\xx_\zz) - \zz\|_\infty < \frac{\delta_\epsilon}{4}$.
	Fixing a choice of  $\xx_\zz$ for each $\zz$ defines a function
	\[
	\begin{array}{rccl}
		\nu: & \T^N_{\widehat{F}} & \longrightarrow  & \X_{K,f} \\
		& \zz & \mapsto & \xx_\zz
	\end{array}
	\]
	satisfying
	\[
	\|P\circ \nu (\zz) - \zz\|_\infty  < \frac{\delta_\epsilon}{4}
	\]
	for every $\zz \in \T^N_{\widehat{F}}$.
	Therefore, if $\zz,\zz' \in \T^N_{\widehat{F}}$  are so that  $\|\zz - \zz'\|_\infty < \frac{\delta_\epsilon}{2}$,
	then
	\begin{eqnarray*}
		\|P\circ\nu(\zz) - P\circ\nu(\zz')\|_\infty &\leq & \|P\circ \nu(\zz) - \zz\|_\infty + \|\zz - \zz'\|_\infty + \|\zz' - P\circ\nu(\zz')\|_\infty \\[.2cm]
		&<& \frac{\delta_\epsilon}{4} + \frac{\delta_\epsilon}{2} + \frac{\delta_\epsilon}{4} \\[.2cm]
		&=& \delta_\epsilon
	\end{eqnarray*}
	which implies
	$
	\|\nu(\zz) - \nu(\zz')\|_2
	< \epsilon
	$ (by Eq. (\ref{eqn: UniformCont})),
	and   $\nu$ extends to a simplicial map
	\[
	\begin{array}{rccl}
		R(\nu) : &R_{\frac{\delta_\epsilon}{2}}(\T^N_{\widehat{F}}, \|\cdot\|_\infty)&
		\longrightarrow &
		R_\epsilon(\X_{K,f}, \|\cdot\|_2) \\
		&\sigma &\mapsto & \nu(\sigma)
	\end{array}
	\]
	at the level of Rips complexes.
	
	We claim that
	$R_\epsilon(P)\circ R(\nu)$
	is contiguous to the inclusion
	\[
	\iota :
	R_{\frac{\delta_\epsilon}{2}}(\T^N_{\widehat{F}}, \|\cdot\|_\infty)
	\hookrightarrow
	R_{\epsilon}(\T^N_{\widehat{F}}, \|\cdot\|_\infty)
	\]
	Indeed,
	if  $\zz,\zz' \in \T^N_{\widehat{F}}$ are so that  $\|\zz - \zz'\|_\infty < \frac{\delta_\epsilon}{2}$,
	then
	\begin{eqnarray*}
		\|\zz'  - P\circ \nu(\zz)\|_\infty &\leq &  \|\zz' - \zz\|_\infty + \|P\circ \nu (\zz) - \zz\|_\infty \\[.2cm]
		&< & \frac{\delta_\epsilon}{2} + \frac{\delta_\epsilon}{4} \\[.2cm]
		& <  & \delta_\epsilon \\[.2cm]
		&\leq &\epsilon
	\end{eqnarray*}
	showing that the set-theoretic union
	\[
	\iota(\sigma)
	\;\cup \;
	\big(R_\epsilon(P) \circ R(\nu)\big)(\sigma)
	\]
	is an element of $R_\epsilon(\T^N_{\widehat{F}}, \|\cdot \|_\infty)$
	for every $\sigma \in R_{\frac{\delta_\epsilon}{2}}(\T^N_{\widehat{F}}, \|\cdot\|_\infty)$.
	
	Contiguity at the level of simplicial maps implies that $\iota_* = R_\epsilon(P)_* \circ R(\nu)_*$ in  homology,
	and since $\iota_*$ is surjective (Corollary \ref{Corollary: SurjectiveTorus}),
	then it follows that $R_\epsilon(P)_*$ is also surjective.
\end{proof}

The next thing to note is that
Lemma \ref{lemma:PersTorus} together with
Lemma \ref{lemma:SurjectiveP}
yields the following
estimate for the number of toroidal persistent features  in $\mathcal{R}(\X_{K,f}, \|\cdot\|_2)$:
\begin{theorem}\label{theorem: longbarsX}
	Fix $1\leq n \leq N$, and let $1\leq n_1 < \cdots < n_\ell \leq N$ be the longest sequence for which
	\[
	\vert\widehat{F}(\kk_n)\vert =  \vert\widehat{F}(\kk_{n_1})\vert = \cdots = \vert\widehat{F}(\kk_{n_\ell})\vert.
	\]
	Then, for each $1\leq j \leq N$, the multiset cardinality of
	\[
	\left\{
	(0,b) \in \dgm^\mathcal{R}_j\left( \X_{K,f}\right) \; \Big\vert \; b \geq \sqrt{3}\vert\widehat{F}(\kk_n)\vert
	\right\}
	\]
	is  greater than or equal to
	$\mu_j(1) + \cdots + \mu_j(n)$, for
	\[
	\mu_j(n) =
	{n_1 - 1\choose j - 1} + \cdots + {n_\ell - 1 \choose j-1}.
	\]
\end{theorem}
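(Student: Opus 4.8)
The plan is to transfer the toroidal persistent features from $\T^N_{\widehat{F}}$ to $\X_{K,f}$ by exploiting the surjection from Lemma \ref{lemma:SurjectiveP}, together with a rank/dimension count against the multiplicities computed in Lemma \ref{lemma:PersTorus}. First, I would fix $1 \leq n \leq N$, set $\epsilon = \sqrt{3}|\widehat{F}(\kk_n)|$, and pick any $\delta$ with $0 < \delta < \epsilon$. The key is to estimate the persistent Betti number $\beta_j^{\delta,\epsilon'}(\X_{K,f})$ for $\delta < \epsilon' < \epsilon$, i.e.\ the rank of the map $H_j(R_\delta(\X_{K,f})) \to H_j(R_{\epsilon'}(\X_{K,f}))$ induced by inclusion. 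Via Eq.~(\ref{eqn: BettiCount}) or (\ref{eqn: BettiCount2}), this rank is exactly the number of bars in $\bcd_j^\mathcal{R}(\X_{K,f})$ containing $[\delta,\epsilon']$; since every such bar has left endpoint $0$ and we can take $\delta \to 0$, these are precisely the points $(0,b) \in \dgm_j^\mathcal{R}(\X_{K,f})$ with $b > \epsilon'$, and then letting $\epsilon' \to \epsilon$ gives $b \geq \epsilon = \sqrt{3}|\widehat{F}(\kk_n)|$, which is the multiset being counted. So it suffices to show $\beta_j^{\delta,\epsilon'}(\X_{K,f}) \geq \mu_j(1) + \cdots + \mu_j(n)$.

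Next I would compare with the torus. Consider the commuting square of inclusion-induced and $P$-induced maps
\[
\begin{array}{ccc}
H_j\big(R_\delta(\X_{K,f},\|\cdot\|_2)\big) & \longrightarrow & H_j\big(R_{\epsilon'}(\X_{K,f},\|\cdot\|_2)\big) \\[.2cm]
\big\downarrow R_\delta(P)_* & & \big\downarrow R_{\epsilon'}(P)_* \\[.2cm]
H_j\big(R_\delta(\T^N_{\widehat{F}},\|\cdot\|_\infty)\big) & \longrightarrow & H_j\big(R_{\epsilon'}(\T^N_{\widehat{F}},\|\cdot\|_\infty)\big)
\end{array}
\]
which commutes because $P$ is simplicial and compatible with the inclusions. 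By Corollary \ref{Corollary: SurjectiveTorus} the bottom map is surjective (as $0 < \delta \leq \epsilon' < \sqrt{3}|\widehat{F}(\kk_N)| \leq \sqrt{3}|\widehat{F}(\kk_n)|$ — note we need $\epsilon'$ below the smallest radius here; if $n < N$ one instead works at scale just below $\sqrt{3}|\widehat{F}(\kk_N)|$ and uses monotonicity, or observes directly that the relevant bars all persist past $\epsilon'$). Commutativity gives that the image of the top map surjects onto the image of the bottom map under $R_{\epsilon'}(P)_*$, hence
\[
\mathsf{rank}\big(T^{\X}_{\delta,\epsilon'}\big) \;\geq\; \mathsf{rank}\big(T^{\T}_{\delta,\epsilon'}\big) \;=\; \beta_j^{\delta,\epsilon'}(\T^N_{\widehat{F}}).
\]
By Lemma \ref{lemma:PersTorus}, the right-hand side equals the number of points $(0,b) \in \dgm_j^\mathcal{R}(\T^N_{\widehat{F}},\|\cdot\|_\infty)$ with $b > \epsilon'$; letting $\epsilon' \uparrow \sqrt{3}|\widehat{F}(\kk_n)|$, these are exactly the classes $\left(0,\sqrt{3}|\widehat{F}(\kk_m)|\right)$ with $|\widehat{F}(\kk_m)| \geq |\widehat{F}(\kk_n)|$, i.e.\ with $m \leq n$ in our decreasing ordering (treating ties correctly via the longest-sequence convention), counted with multiplicity $\mu_j(m)$. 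Summing gives $\sum_{m=1}^n \mu_j(m)$, as claimed.

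The main obstacle I anticipate is the bookkeeping around the strict-versus-nonstrict scale comparisons and the ties among the $|\widehat{F}(\kk_m)|$: one must be careful that at scale $\epsilon' < \sqrt{3}|\widehat{F}(\kk_n)|$, all the classes of radius $\geq \sqrt{3}|\widehat{F}(\kk_n)|$ are genuinely still alive in $\X_{K,f}$, and that the bottom-row surjectivity of Corollary \ref{Corollary: SurjectiveTorus} (which is stated for $\epsilon' < \sqrt{3}|\widehat{F}(\kk_N)|$, the \emph{smallest} radius) can be applied — the clean fix is to run the entire argument at a scale $\epsilon'$ just below $\sqrt{3}|\widehat{F}(\kk_N)|$ so that the hypothesis of Corollary \ref{Corollary: SurjectiveTorus} holds outright, then note that the bars of $\X_{K,f}$ born at $0$ with death $\geq \sqrt{3}|\widehat{F}(\kk_n)|$ are a subcollection of those alive at $\epsilon'$, and that the torus diagram already isolates exactly the $\mu_j(m)$-multiplicities we want by monotonicity of the $|\widehat{F}(\kk_m)|$. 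Everything else — the existence of the rank inequality from a commuting square with one surjective edge, and the translation between persistent Betti numbers and diagram point counts via Eq.~(\ref{eqn: BettiCount2}) — is formal once the $\mathsf{q}$-tameness (hence well-definedness of the diagrams, Proposition \ref{prop: vrcircle} and \cite[Proposition 5.1]{chazal2014persistence}) is in hand.
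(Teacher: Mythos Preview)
Your overall strategy matches the paper's exactly: the paper simply asserts that the theorem follows from Lemma~\ref{lemma:PersTorus} together with Lemma~\ref{lemma:SurjectiveP}, and you are fleshing out precisely that deduction via a commuting square and a persistent Betti number comparison.

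There is, however, one genuine mix-up in your execution. In the commuting square you need the surjectivity of the \emph{left vertical} map $R_\delta(P)_*$, which is supplied by Lemma~\ref{lemma:SurjectiveP} at the small scale $\delta$; you instead invoke Corollary~\ref{Corollary: SurjectiveTorus} for the \emph{bottom horizontal} map, which is neither sufficient nor necessary for the rank inequality. Concretely, from commutativity $q\circ f = g\circ p$ one gets $q(\mathrm{Img}\,f)=g(p(A))$; it is surjectivity of $p=R_\delta(P)_*$ that turns this into $g(C)=\mathrm{Img}\,g$, whence $\mathrm{rank}(f)\ge \dim q(\mathrm{Img}\,f)=\mathrm{rank}(g)$. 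Once you use the correct surjection, your ``main obstacle'' evaporates: only $\delta$ must lie below $\sqrt{3}\,|\widehat F(\kk_N)|$ for Lemma~\ref{lemma:SurjectiveP} to apply, while $\epsilon'$ is unconstrained and may approach $\sqrt{3}\,|\widehat F(\kk_n)|$ from below for any $n$. The awkward ``clean fix'' you propose (working only below the smallest radius and then appealing to monotonicity) is therefore unnecessary, and in fact would not deliver the stronger bound at larger thresholds.

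One smaller point worth tightening: the passage from $\beta_j^{\delta,\epsilon'}(\X_{K,f})\ge M$ for all small $\delta$ and all $\epsilon'<\sqrt{3}\,|\widehat F(\kk_n)|$ to ``there are $\ge M$ bars with birth $0$ and death $\ge\sqrt{3}\,|\widehat F(\kk_n)|$'' is not automatic from a single estimate. The sets $S_{\delta,\epsilon'}=\{I\in\bcd_j:[\delta,\epsilon']\subset I\}$ are nested and, by $\mathsf q$-tameness, finite; so they stabilize and the intersection retains cardinality $\ge M$, giving bars containing $(0,\sqrt{3}\,|\widehat F(\kk_n)|)$ as required. Your sentence ``since every such bar has left endpoint $0$'' presupposes this conclusion rather than deriving it.
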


In order to make statements on
$\dgm_j^\mathcal{R}(\SW_{d,\tau} S_K f)$, we will
leverage  the diagram

\begin{equation}
	\begin{tikzcd}
		\left(\SW_{d,\tau}S_Kf, \| \cdot \|_2 \right)\arrow[r,  bend left,"\Omega^{+}_{K,f}"]
		&
		\left(\X_{K,f}, \| \cdot \|_2\right)  \arrow[l, bend left, "\Omega_{K,f}"]
	\end{tikzcd}
\end{equation}

and the estimates in Rips persistence that it implies.
Here $\Omega_{K,f}$ is the Vandermonde matrix defined in Eq. (\ref{eqn: Omega}),
and $\Omega^+_{K,f}$ is its Moore-Penrose pseudoinverse (see \cite[III.3.4]{ben2003generalized}).

Let $0 < \sigma_{\mathsf{min}} \leq \sigma_{\mathsf{max}}$ be the smallest and  largest singular
values  of   $\Omega_{K,f}$, respectively.
Standard singular value decomposition arguments show that
\[
\left\|\Omega_{K,f} \uu \right\|_2
\leq
\sigma_{\mathsf{max}}\|\uu\|_2
\;\;\;\;\;\;
\mbox{and}
\;\;\;\;\;\;
\left\|\Omega^+_{K,f}\vv\right\|_2
\leq
\frac{1}{\sigma_{\mathsf{min}}}\|\vv\|_2
\]
for every $\uu\in \C^\alpha$ and $\vv \in \C^{d+1}$,
and thus we have  induced simplicial maps
\[
\Omega :
R_\epsilon(\X_{K,f}) \longrightarrow
R_{\epsilon\sigma_{\mathsf{max}}}(\SW_{d,\tau} S_K f)
\]

\[
\Omega^+ :
R_{\delta\sigma_{\mathsf{min}}}(\SW_{d,\tau} S_K f) \longrightarrow
R_{\delta}(\X_{K,f})
\]
at the level of Rips complexes.
Let
\[
\kappa(\Omega_{K,f}) = \frac{\sigma_{\mathsf{max}}}{\sigma_{\mathsf{min}}}\]
denote the condition number of $\Omega_{K,f}$.
Then
for every $\epsilon \leq \epsilon'$,
the    diagram

\begin{equation}\label{eqn: diagramRipsOmega}
	\begin{tikzcd}
		& R_{\epsilon\sigma_{\mathsf{min}}}(\SW_{d,\tau} S_K f)\arrow[r, hook]\arrow[d, "\Omega^+"]
		& R_{\epsilon'\sigma_{\mathsf{min}}}(\SW_{d,\tau} S_K f) \arrow[d, "\Omega^+"]
		\\
		R_{\epsilon/\kappa\left(\Omega_{K,f}\right)}(\X_{K,f})\arrow[ru, "\Omega"] \arrow[r, hook]
		& R_\epsilon(\X_{K,f}) \arrow[r, hook]
		& R_{\epsilon'}(\X_{K,f})
	\end{tikzcd}
\end{equation}

commutes.
The horizontal maps are inclusions, and commutativity
follows from noting  that $\Omega^+_{K,f}$ is a left inverse
of $\Omega_{K,f}$.
Indeed,  the latter (tall skinny) matrix is  full-rank
with our choice of $d$ and $\tau$.
Taking homology in dimension $j\in \N$ we get the induced homomorphisms
\begin{equation*}
	\begin{tikzcd}
		&H_j^{\epsilon\sigma_{\mathsf{min}},\; \epsilon'\sigma_{\mathsf{min}}}(\mathcal{R}(\SW_{d,\tau} S_K f) ; \F) \arrow[d,"\Omega^+_*"]
		\\
		H_j^{\epsilon/\kappa\left(\Omega_{K,f}\right),\; \epsilon'}(\mathcal{R}(\X_{K,f});\F) \arrow[r,hook]
		& H_j^{\epsilon,\, \epsilon'}(\mathcal{R}(\X_{K,f});\F)
	\end{tikzcd}
\end{equation*}
at the level of persistent homology groups (See Eq. (\ref{eqn: PersHomoGroup})),
where the horizontal map is an inclusion as linear spaces.
Commutativity of the diagram in Eq. (\ref{eqn: diagramRipsOmega})
implies that
\[
H_j^{\epsilon/\kappa\left(\Omega_{K,f}\right),\; \epsilon'}(\mathcal{R}(\X_{K,f});\F)
\subset
\mathsf{Img}\left(\Omega_*^+\right)
\]
which, after taking dimensions, yields the following inequality of persistent Betti numbers (see Eq. (\ref{eqn: PerBettiNum})):
\[
\beta_j^{\epsilon\sigma_{\mathsf{min}},\; \epsilon'\sigma_{\mathsf{min}}}(\mathcal{R}(\SW_{d,\tau} S_K f))
\;\;\geq\;\;
\mathsf{rank}\left(\Omega_*^+\right)
\;\;\geq \;\;
\beta_j^{\epsilon/\kappa\left(\Omega_{K,f}\right),\; \epsilon'}(\mathcal{R}(\X_{K,f}))
\]

Letting $\epsilon \rightarrow 0$ and using Theorem
\ref{theorem: longbarsX}, we get the following:
\begin{theorem}\label{theorem: longbarsSW}
	Let $f(t) = F(t\omega) $ be quasiperiodic with frequency vector $\omega \in \R^N$
	and parent function $F\in C^r(\T^N)$, $r > \frac{N}{2}$.
	Fix parameters $K,d,\tau$ as before.
	
	Let $\sigma_{\mathsf{min}}>0 $ be the smallest singular value of the Vandermonde
	matrix $\Omega_{K,f}$ (see Eq. (\ref{eqn: Omega})), and
	for  $1\leq n \leq N$,  let $1\leq n_1 < \cdots < n_\ell \leq N$ be the longest sequence for which
	\[
	\vert\widehat{F}(\kk_n)\vert =  \vert\widehat{F}(\kk_{n_1})\vert = \cdots = \vert\widehat{F}(\kk_{n_\ell})\vert.
	\]
	Then, for each $1\leq j \leq N$,
	the multiset cardinality of
	\[
	\left\{
	(0,b) \in \dgm^\mathcal{R}_j\left( \SW_{d,\tau} S_K f \right)
	\;\; \Big\vert \;\;
	b \geq \sqrt{3}\vert\widehat{F}(\kk_n)\vert\sigma_{\mathsf{min}}
	\right\}
	\]
	is  greater than or equal to $\mu_j(1) + \cdots + \mu_j(n)$ for
	\[
	\mu_j(n)=
	{n_1 - 1\choose j - 1} + \cdots + {n_\ell - 1 \choose j-1}.
	\]
\end{theorem}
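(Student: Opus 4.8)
The plan is to combine the persistent Betti number inequality derived just above the statement, namely
\[
\beta_j^{\epsilon\sigma_{\mathsf{min}},\; \epsilon'\sigma_{\mathsf{min}}}(\mathcal{R}(\SW_{d,\tau} S_K f))
\;\geq\;
\beta_j^{\epsilon/\kappa(\Omega_{K,f}),\; \epsilon'}(\mathcal{R}(\X_{K,f}))
\qquad (\epsilon \leq \epsilon'),
\]
with the toroidal feature count for $\mathcal{R}(\X_{K,f})$ from Theorem~\ref{theorem: longbarsX}, and then let $\epsilon \to 0$. Throughout write $c_m := \sqrt{3}\,|\widehat{F}(\kk_m)|$, so that $c_1 \geq \cdots \geq c_N > 0$, and set $M := \mu_j(1) + \cdots + \mu_j(n)$.

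First I would recast Theorem~\ref{theorem: longbarsX} at the level of barcodes: the stated lower bound on the multiset $\{(0,b)\in\dgm_j^\mathcal{R}(\X_{K,f})\mid b\geq c_n\}$ says exactly that $\bcd_j^\mathcal{R}(\X_{K,f})$ contains at least $M$ intervals, counted with multiplicity, whose left endpoint is $0$ and whose right endpoint is $\geq c_n$. Each such interval contains the open interval $(0, c_n)$, hence contains $[\delta,\, c_n-\delta]$ for every small $\delta>0$, so by Eq.~(\ref{eqn: BettiCount}) we get $\beta_j^{\delta,\; c_n-\delta}(\mathcal{R}(\X_{K,f}))\geq M$ for all small $\delta>0$. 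Substituting $\epsilon/\kappa(\Omega_{K,f})=\delta$ and $\epsilon'=c_n-\delta$ into the displayed inequality — legitimate for small $\delta$ since then $\epsilon=\kappa(\Omega_{K,f})\delta\leq\epsilon'$ — and using $\epsilon\sigma_{\mathsf{min}}=\sigma_{\mathsf{max}}\delta$ yields
\[
\beta_j^{\sigma_{\mathsf{max}}\delta,\; (c_n-\delta)\sigma_{\mathsf{min}}}(\mathcal{R}(\SW_{d,\tau} S_K f))\;\geq\; M
\qquad \text{for all sufficiently small } \delta>0.
\]

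To convert this into diagram points I would argue as follows. Since $\SW_{d,\tau} S_K f$ is a bounded subset of $\C^{d+1}$, it is totally bounded, so its Rips persistent homology is $\mathsf{q}$-tame and, for each small $\delta>0$, the sub-multiset $\mathcal{I}_\delta$ of $\bcd_j^\mathcal{R}(\SW_{d,\tau} S_K f)$ consisting of intervals that contain $[\sigma_{\mathsf{max}}\delta,\, (c_n-\delta)\sigma_{\mathsf{min}}]$ is finite, with $|\mathcal{I}_\delta|\geq M$ by the previous display and Eq.~(\ref{eqn: BettiCount}). As $\delta$ decreases the closed interval $[\sigma_{\mathsf{max}}\delta,\, (c_n-\delta)\sigma_{\mathsf{min}}]$ strictly grows, so $\mathcal{I}_{\delta'}\subseteq\mathcal{I}_\delta$ whenever $\delta'<\delta$; choosing $\delta_k\downarrow 0$, the decreasing sequence of finite multisets $\mathcal{I}_{\delta_k}$ stabilizes and its intersection $\bigcap_k \mathcal{I}_{\delta_k}=\bigcap_{\delta>0}\mathcal{I}_\delta$ still has multiset cardinality $\geq M$. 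Any interval $I$ in this intersection contains $[\sigma_{\mathsf{max}}\delta,\,(c_n-\delta)\sigma_{\mathsf{min}}]$ for all small $\delta$, hence contains $\bigcup_{\delta>0}[\sigma_{\mathsf{max}}\delta,\,(c_n-\delta)\sigma_{\mathsf{min}}]=(0,\, c_n\sigma_{\mathsf{min}})$, which forces the left endpoint of $I$ to be $0$ and its right endpoint to be $\geq c_n\sigma_{\mathsf{min}}$. Reading barcodes back into persistence diagrams then produces at least $M=\mu_j(1)+\cdots+\mu_j(n)$ points $(0,b)\in\dgm_j^\mathcal{R}(\SW_{d,\tau} S_K f)$, counted with multiplicity, with $b\geq c_n\sigma_{\mathsf{min}}=\sqrt{3}\,|\widehat{F}(\kk_n)|\,\sigma_{\mathsf{min}}$, which is the assertion.

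All the substantive work has been done before the statement: the simplicial maps $\Omega$, $\Omega^+$ built from the full-rank Vandermonde matrix, the contiguity argument, and commutativity of the Rips ladder of Eq.~(\ref{eqn: diagramRipsOmega}) — these give the persistent Betti inequality — together with Theorem~\ref{theorem: longbarsX}. I expect the only genuinely delicate point to be the passage to the limit $\epsilon\to 0$: persistent Betti numbers count bars \emph{containing} a fixed closed interval, so one must verify that this really detects bars of the prescribed shape $(0,b)$ with $b$ at least the threshold. The two-sided $\delta$-buffers, the nesting of the multisets $\mathcal{I}_\delta$, and $\mathsf{q}$-tameness (finiteness of each $\mathcal{I}_\delta$) are precisely what make this bookkeeping go through and absorb any ambiguity about open versus closed interval endpoints.
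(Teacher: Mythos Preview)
Your proposal is correct and follows the same approach as the paper, which simply says ``Letting $\epsilon \to 0$ and using Theorem~\ref{theorem: longbarsX}'' after deriving the persistent Betti inequality. You have made explicit the limiting argument the paper leaves implicit: the use of $\mathsf{q}$-tameness to ensure each $\mathcal{I}_\delta$ is finite, the nesting and eventual stabilization of these multisets, and the resulting passage from persistent Betti numbers to diagram points of the form $(0,b)$ with $b\geq \sqrt{3}|\widehat{F}(\kk_n)|\sigma_{\mathsf{min}}$.
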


The Stability Theorem  for Rips persistence (Eq. (\ref{eqn: ThmStability})),
together with
Theorem \ref{theorem: longbarsSW}  and Corollary \ref{Corollary: DGMapproximation} yield the main result of this section.
\begin{theorem}\label{thm: persistenceslidingwindowf}
	With the same hypotheses of Theorem \ref{theorem: longbarsSW},
	and for $1\leq j,n \leq N$,
	the multiset cardinality of
	\[
	\left\{
	(a,b) \in \dgm^\mathcal{R}_j\left( \SW_{d,\tau} f \right)
	\;\; \Big\vert \;\;
	b-a \;\geq\; \sqrt{3}\vert\widehat{F}(\kk_n)\vert\sigma_{\mathsf{min}}
	- 4\sqrt{d+1}\|f - S_K f\|_\infty
	\right\}
	\]
	is greater than or equal to $\mu_j(1) + \cdots +
	\mu_j(n)$.
\end{theorem}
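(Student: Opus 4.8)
The plan is to combine two inputs we already have available: Theorem \ref{theorem: longbarsSW}, which gives the desired lower bound on the number of toroidal features for the truncated signal $S_K f$, and Corollary \ref{Corollary: DGMapproximation}, which controls the bottleneck distance between $\dgm_j^\mathcal{R}(\SW_{d,\tau} f)$ and $\dgm_j^\mathcal{R}(\SW_{d,\tau} S_K f)$ by $2\sqrt{d+1}\|f - S_K f\|_\infty$. The strategy is purely a bottleneck-stability transfer argument: a feature $(0,b)$ in the diagram of $S_K f$ with $b$ large must be matched, under any $\delta$-matching witnessing the bottleneck distance, either to a nearby feature in the diagram of $f$ or to the diagonal; the latter is impossible when $b$ exceeds $2\delta$, so a genuine feature of comparable persistence survives.

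First I would fix $1\le j, n \le N$ and set $\delta := 2\sqrt{d+1}\|f - S_K f\|_\infty$, so that by Corollary \ref{Corollary: DGMapproximation} we have $\dd_B\big(\dgm_j^\mathcal{R}(\SW_{d,\tau} f), \dgm_j^\mathcal{R}(\SW_{d,\tau} S_K f)\big) \le \delta$. (If $\delta = 0$ the two diagrams coincide and the claim is immediate from Theorem \ref{theorem: longbarsSW}; otherwise, for any $\delta' > \delta$ there is a $\delta'$-matching, and we argue with $\delta'$ and then let $\delta' \to \delta$.) Next I would invoke Theorem \ref{theorem: longbarsSW}: it provides a submultiset $\mathcal{A}$ of $\dgm_j^\mathcal{R}(\SW_{d,\tau} S_K f)$ with $\#\mathcal{A} \ge \mu_j(1) + \cdots + \mu_j(n)$, every element of which has the form $(0,b)$ with $b \ge \sqrt{3}|\widehat{F}(\kk_n)|\sigma_{\mathsf{min}}$. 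Given a $\delta'$-matching $\mu : A \to A'$ between the two diagrams, I would show that each $(0,b) \in \mathcal{A}$ lies in $A$ and is matched to an off-diagonal point: indeed, if $(0,b) \notin A$ then condition (2) in the definition of $\delta'$-matching forces $b - 0 < 2\delta'$; but for $\delta'$ close enough to $\delta$ this contradicts $b \ge \sqrt{3}|\widehat{F}(\kk_n)|\sigma_{\mathsf{min}}$ provided $\sqrt{3}|\widehat{F}(\kk_n)|\sigma_{\mathsf{min}} - \delta > 0$ — and if that quantity is $\le 0$ the asserted persistence threshold $\sqrt{3}|\widehat{F}(\kk_n)|\sigma_{\mathsf{min}} - 2\delta$ is nonpositive and the statement is vacuous since every point of the diagram trivially satisfies $b - a \ge 0 \ge$ (nonpositive threshold), though one must be slightly careful here and instead simply note that the count from Theorem \ref{theorem: longbarsSW} already exceeds the required number when the threshold is $\le 0$. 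Assuming the threshold is positive, each such $(0,b)$ is matched to some $(a',b') \in \dgm_j^\mathcal{R}(\SW_{d,\tau} f)$ with $\max\{|a'|, |b - b'|\} < \delta'$, hence $b' - a' > (b - \delta') - \delta' = b - 2\delta' \ge \sqrt{3}|\widehat{F}(\kk_n)|\sigma_{\mathsf{min}} - 2\delta'$.

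Finally I would assemble the count. Since $\mu$ is a bijection of multisets, distinct elements of $\mathcal{A}$ map to distinct elements (with multiplicity) of $\dgm_j^\mathcal{R}(\SW_{d,\tau} f)$, so the image $\mu(\mathcal{A})$ is a submultiset of cardinality $\ge \mu_j(1) + \cdots + \mu_j(n)$, each of whose points $(a',b')$ satisfies $b' - a' > \sqrt{3}|\widehat{F}(\kk_n)|\sigma_{\mathsf{min}} - 2\delta'$. Letting $\delta' \downarrow \delta = 2\sqrt{d+1}\|f - S_K f\|_\infty$, the strict inequalities pass to the non-strict bound $b' - a' \ge \sqrt{3}|\widehat{F}(\kk_n)|\sigma_{\mathsf{min}} - 4\sqrt{d+1}\|f - S_K f\|_\infty$, which is exactly the claimed threshold. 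This yields the stated lower bound on the multiset cardinality.

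The main obstacle, such as it is, is bookkeeping rather than conceptual depth: one must be careful that the $\delta'$-matching does not quietly identify two distinct features of $\mathcal{A}$ with a single feature of the $f$-diagram (this is ruled out because a matching is by definition a bijection on multisets, so multiplicities are respected), and one must handle the degenerate case where the asserted persistence threshold is nonpositive — there the inequality $b - a \ge$ (threshold) is automatic for every point, so the bound reduces to counting all of $\dgm_j^\mathcal{R}(\SW_{d,\tau} f)$ against $\mu_j(1) + \cdots + \mu_j(n)$, and one should check that this still holds (it does, because bottleneck-distance $\delta$ forces the $f$-diagram to have at least as many off-diagonal points of persistence $> 2\delta$ as the $S_K f$-diagram does, but when the threshold is nonpositive we may as well observe directly that passing through the matching can only lose points to the diagonal when their persistence is already $< 2\delta'$, i.e. below any positive quantity). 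Apart from these edge cases, the argument is a routine application of bottleneck stability and the previously established Theorem \ref{theorem: longbarsSW}.
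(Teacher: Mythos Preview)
Your proposal is correct and follows exactly the route the paper takes: the paper's proof is in fact a single sentence asserting that the result follows from the Stability Theorem (Eq.~(\ref{eqn: ThmStability})), Theorem~\ref{theorem: longbarsSW}, and Corollary~\ref{Corollary: DGMapproximation}, with no further details supplied. Your write-up simply unpacks that sentence into the explicit $\delta'$-matching argument, so there is nothing to compare---you have expanded the paper's one-line proof faithfully.
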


The extremal singular values of Vandermonde matrices with nodes
in the unit circle
haven been extensively studied   in the harmonic analysis
and computational mathematics literature \cite{aubel2019vandermonde, moitra2015super, ferreira1999super}.
In particular, the lower bound on $\sigma_{\mathsf{min}}$ from \cite[Eq. (55)]{aubel2019vandermonde} implies the following.

\begin{corollary}\label{cor: persistencedelta}
	With the same hypotheses of Theorem \ref{theorem: longbarsSW}, and if
	\[
	d \;>\;  \frac{1}{\delta_\omega}- \frac{3}{2}
	\;\;\;\;\;\; \;\;,\;\;\; \;\;\;
	\delta_\omega :=
	\min_{1\leq   \ell < m \leq \alpha} \,
	\frac{1}{\pi}\arcsin\left(
	\frac{\left\vert e^{i\<\kk_\ell,\omega\>\tau} - e^{i\<\kk_m,\omega\>\tau}\right\vert}
	{2}
	\right)
	\]
	then, for each $1\leq j,n \leq N$,
	the multiset cardinality of
	\[
	\left\{
	(a,b) \in \dgm^\mathcal{R}_j\left( \SW_{d,\tau}f \right)
	\Big\vert
	b-a \geq \sqrt{3}\vert\widehat{F}(\kk_n)\vert\sqrt{d + \frac{3}{2} - \frac{1}{\delta_\omega}}
	- 4\sqrt{d+1}\|f - S_K f\|_\infty
	\right\}
	\]
	is always greater than or equal to $\mu_j(1) + \cdots + \mu_j(n)$.
\end{corollary}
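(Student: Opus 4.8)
The plan is to obtain Corollary \ref{cor: persistencedelta} from Theorem \ref{thm: persistenceslidingwindowf} by substituting an explicit lower bound for the smallest singular value $\sigma_{\mathsf{min}}$ of the Vandermonde matrix $\Omega_{K,f}$ and then using monotonicity. Under the hypotheses of Theorem \ref{theorem: longbarsSW} the nodes $e^{i\<\kk_1,\omega\>\tau},\ldots,e^{i\<\kk_\alpha,\omega\>\tau}\in S^1$ are pairwise distinct (this is exactly the condition on $\tau$ already used in Theorem \ref{thm: Structure}), so $\Omega_{K,f}$ has full rank $\alpha$ and $\sigma_{\mathsf{min}}>0$ is well-defined. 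First I would unwind $\delta_\omega$: from $|e^{i\theta}-e^{i\theta'}| = 2\left|\sin\!\left(\tfrac{\theta-\theta'}{2}\right)\right|$ one sees that $\tfrac1\pi\arcsin\!\left(\tfrac{|e^{i\<\kk_\ell,\omega\>\tau}-e^{i\<\kk_m,\omega\>\tau}|}{2}\right)$ is precisely the wrap-around distance on the unit circle between the $\ell$-th and $m$-th nodes, normalized so that antipodal points sit at distance $\tfrac12$; hence $\delta_\omega$ is the minimal separation of the node set in the sense used in the harmonic-analysis literature on Vandermonde systems with nodes on $S^1$.

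Next I would invoke the lower bound \cite[Eq.\ (55)]{aubel2019vandermonde} for the smallest singular value of a Vandermonde matrix with $d+1$ rows whose nodes have minimal separation $\delta_\omega$; it yields
\[
\sigma_{\mathsf{min}} \;\geq\; \sqrt{\,d + \tfrac32 - \tfrac{1}{\delta_\omega}\,},
\]
which is a genuine (positive) bound exactly when $d + \tfrac32 - \tfrac1{\delta_\omega} > 0$, i.e.\ $d > \tfrac1{\delta_\omega} - \tfrac32$, the standing hypothesis of the corollary. Since the map $s \mapsto \sqrt3\,|\widehat{F}(\kk_n)|\,s - 4\sqrt{d+1}\,\|f - S_K f\|_\infty$ is nondecreasing, this inequality gives
\[
\sqrt3\,|\widehat{F}(\kk_n)|\,\sigma_{\mathsf{min}} - 4\sqrt{d+1}\,\|f - S_K f\|_\infty \;\geq\; \sqrt3\,|\widehat{F}(\kk_n)|\sqrt{\,d + \tfrac32 - \tfrac1{\delta_\omega}\,} - 4\sqrt{d+1}\,\|f - S_K f\|_\infty,
\]
so the set of points $(a,b)\in\dgm^\mathcal{R}_j(\SW_{d,\tau}f)$ with $b-a$ above the right-hand threshold contains the set with $b-a$ above the left-hand threshold. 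Multiset cardinalities only grow when we pass to the weaker threshold, and Theorem \ref{thm: persistenceslidingwindowf} bounds the cardinality of the stronger-threshold set below by $\mu_j(1)+\cdots+\mu_j(n)$; the claim follows for all $1\leq j,n\leq N$.

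I expect the only real obstacle to be bookkeeping with \cite{aubel2019vandermonde}: matching the normalization of $\delta_\omega$ (unit circle versus circumference $2\pi$, arc-length versus chord-length separation) to the precise hypotheses and constants there, and keeping track that $\Omega_{K,f}$ has $d+1$ rows rather than $d$, which is what turns the additive constant in their estimate into the $+\tfrac32$ appearing in the statement. No new topological or analytic ideas are needed beyond the monotonicity argument above.
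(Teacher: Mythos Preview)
Your proposal is correct and matches the paper's own argument: the paper simply states that the lower bound on $\sigma_{\mathsf{min}}$ from \cite[Eq.\ (55)]{aubel2019vandermonde} applied to Theorem \ref{thm: persistenceslidingwindowf} yields the corollary, which is exactly the monotonicity substitution you describe. Your additional remarks on unwinding $\delta_\omega$ as the normalized minimal node separation and on the $d+1$ rows accounting for the $+\tfrac32$ are accurate bookkeeping that the paper leaves implicit.
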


This brings us to the end of the theoretical quasiperiodicity analysis in this paper. In the next section, we focus on examples and applications.

\section{Experiments and Applications}\label{section: applications}
This section has two goals: first, to illustrate the pipeline developed in this paper for the analysis of quasiperiodic time series data.
Indeed, we will utilize a synthetic example to review the optimization of $d$ and $\tau$, evaluate our theoretical lower bounds on persistence, and study the effects of noise on sliding window persistence.
The second goal is to provide an example of how quasiperiodicity
can arise in naturally-occurring time series data.
Specifically, we will study a sound recording  of \emph{dissonance},
and illustrate how quasiperiodicity emerges  through the lens of sliding windows and persistence.

\subsection{Computational pipeline and valuation of theoretical lower bounds}\label{subsection: example3freq}
Let
\[
f(t) = 2\sin(t) + 1.8\sin\left(\sqrt{3}t\right)
\;\;\;\mbox{ for }\;\;\;
0 \leq t \leq 60\pi
\]
with graph shown
in Figure \ref{fig: Exp1} below.

\begin{figure}[!htb]
	\centering
	\includegraphics[width=0.8\textwidth]{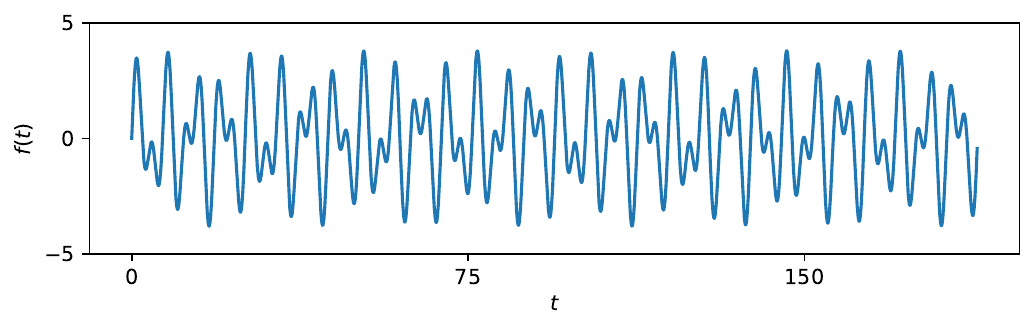}
	\caption{$f(t) = 2\sin(t) + 1.8\sin\left(\sqrt{3}t\right) $ for $0\leq t\leq 60\pi$}
	\label{fig: Exp1}
\end{figure}
We sample $f$  at 10,000 evenly spaced points;
that is, at each
\[
t\in T = \left\{ \frac{3n\pi}{500} \mid n = 0,\ldots, \,9,999\right\}
\]
producing a discrete time series for which
the Discrete Fourier Transform is computed (see Figure \ref{fig: Exp2} below).
We note that since $f$ is real-valued, then $\vert\widehat{f}(\xi)\vert$
is symmetric with respect to the origin,
$\omega = \left(1,\sqrt{3}\right)$, $\kk_1 = (1,0)$, $\kk_2 = (0,1)$, $\kk_3 = (-1,0)$, $\kk_4 = (0, -1)$, and that
$\vert\widehat{F}(\kk_1)\vert = \vert\widehat{F}(\kk_3)\vert \approx 1$,
$\vert\widehat{F}(\kk_2)\vert = \vert\widehat{F}(\kk_4)\vert \approx 0.9$.

\begin{figure}[!htb]
	\centering
	\includegraphics[width=0.85\textwidth]{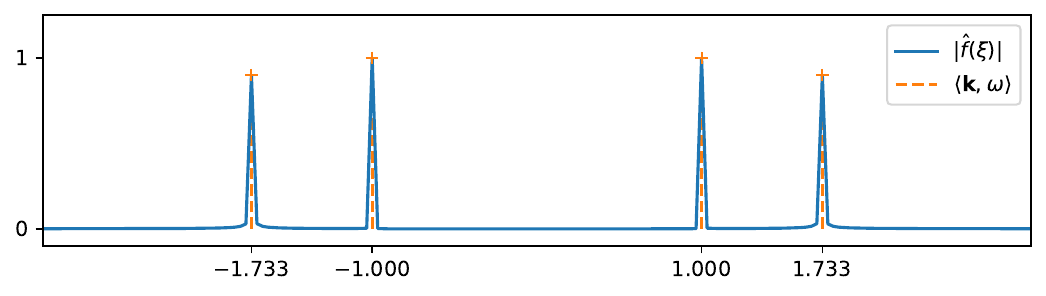}
	\caption{Modulus of the DFT for $f(T)$}
	\label{fig: Exp2}
\end{figure}

The number of prominent peaks in $\vert\widehat{f}(\xi)\vert$ is used---as described in Section \ref{subSection:d}---to select $d= 4$,
while the peak locations $\<\kk,\omega\>$  define the function $\Gamma(x)$ (see Eq. (\ref{eqn: Gopt})) whose minimizer over $\left[0, \tau_{\mathsf{max}}\right]$, for $\tau_\mathsf{max} = \frac{3}{4}\frac{60\pi}{d}$,  yields the choice $\tau \approx 11.9577$ as described in Section \ref{subSection:tau}.
The value of $\tau_{\mathsf{max}}$ is selected to guarantee that
the window size $d\tau$ is less  than that of the domain $T$ over which $f$ is evaluated.

The number of points in $T$ is already large enough that computing the Rips persistent
homology of $SW_{d,\tau} f(T)$, using standard software \cite{ripser},
is  algorithmically intensive.
Thus, we take a \texttt{maxmin} subsample
$ SW_{d,\tau} f(\widetilde{T})$ (see Example \ref{example: tau})
by selecting $\widetilde{T} \subset T$ with 1,000 points,
and compute
$\dgm_j^\mathcal{R}\left(SW_{d,\tau} f(\widetilde{T})\right)$
in dimensions $j=1,2$ and coefficients in $\F = \Z_2$.

Since $\widetilde{T} \neq \R$, then the lower bounds on persistence from Theorem \ref{theorem: longbarsSW} do not readily apply to the diagrams $\dgm_j^\mathcal{R}(SW_{d,\tau} f(\widetilde{T}))$.
That said, the stability theorem implies that the inequality
can be corrected to
\begin{equation}\label{eqn: BoundsHausCorrect}
	b-a \;\geq\; \sqrt{3}\vert\widehat{F}(\kk_n)\vert\sigma_{\mathsf{min}}
	- 4\sqrt{d+1}\|f - S_K f\|_\infty -
	4d_H\left(SW_{d,\tau} f (\widetilde{T}),
	\SW_{d,\tau} f\right)
\end{equation}
where, for this example, the Hausdorff distance term was estimated
as
\[
d_H\left(SW_{d,\tau} f(\widetilde{T}), \SW_{d,\tau} f\right)
\approx 0.54292.
\]
Figure \ref{fig:BoundsHaussNoHauss} below shows
the Rips persistence diagrams $\dgm_j^\mathcal{R}(SW_{d,\tau} f(\widetilde{T}))$, as well as the estimated lower bounds in persistence
with and without the correction term on Hausdorff distance.

\begin{figure}[!htb]
	\centering
	\includegraphics[width=0.95\textwidth]{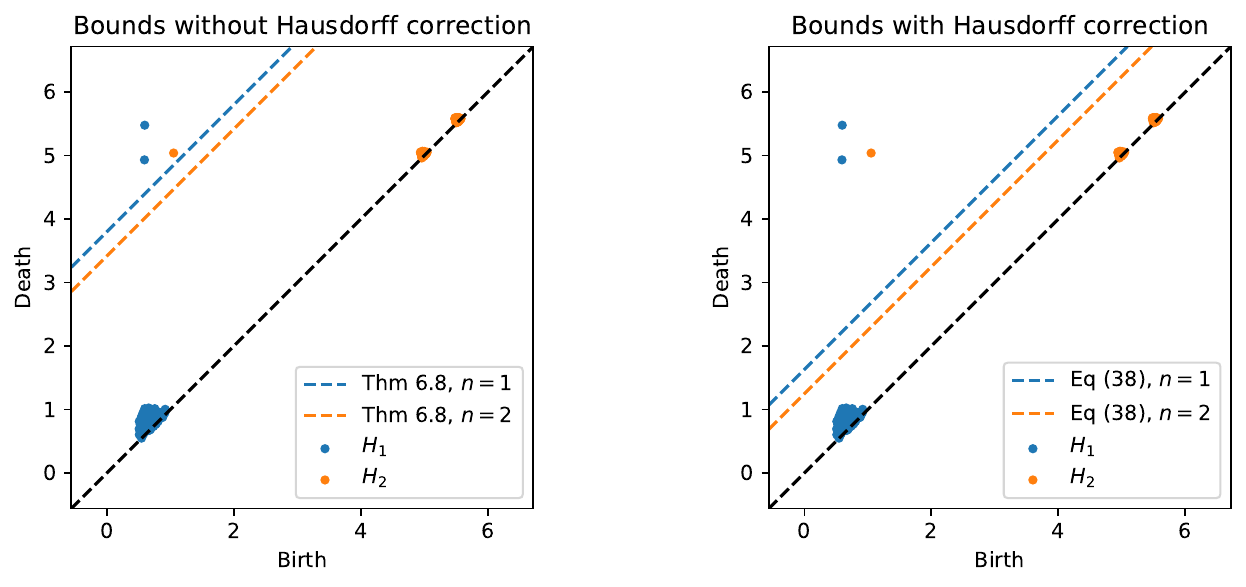}
	\caption{Rips persistence diagrams of
		$SW_{d,\tau} f(\widetilde{T})$ in dimensions $j=1$ (blue) and $j=2$ (orange), and coefficients in $\F = \Z_2$.
		Lower bounds on persistence are shown with dashed lines.
		Left: no Hausdorff correction, and  Right: Hausdorff correction}
	\label{fig:BoundsHaussNoHauss}
\end{figure}

Next, we aim to illustrate the effect of introducing noise to a quasiperiodic signal by examining the sliding window persistence of the resulting signal.
Note that in \cite[Section 4.1]{tralie2018quasi}, the authors extensively studied the effect of adding different types and levels of noise to recurrent videos. They measured the accuracy of a binary classification task inspired by their persistence based (quasi)periodicity scores and showed that persistence separates recurrent and non-recurrent videos under noise very well.
Here, we use the function $f(t) = 2\sin(t) + 1.8\sin\left(\sqrt{3}t\right)$ at $t\in T = \left\{ \frac{3n\pi}{500} \mid n = 0,\ldots, \,9,999\right\}$ and add random Gaussian noise to $f$. Then we use the Discrete Fourier Transform to determine the frequencies. For parameter selection, we choose $d$ based on the number of prominent peaks and compute the optimal $\tau$ as described in Section \ref{subSection:tau}. For each noise level, we compute the sliding window persistence for $800$ landmarks chosen via the \verb*|maxmin| subsampling process. 
In Figure \ref{fig:NoisePers} (Top), we track the maximum persistence (blue) and the second maximum persistence (purple) in dimension $1$ as we increase the Noise-to-Signal Ratio (NSR) defined as 
\[
\mathrm{NSR} = \sqrt{\frac{E[N^2]}{E[S^2]}}
\]
where $N$ is the Gaussian noise, $S$ is the signal $f$, and $E[\ \cdot \ ]$ is the expected value. We also show, for contrast, all other lower persistences (gray), i.e. third maximum persistence, fourth maximum persistence, and so on.
Similarly, in Figure \ref{fig:NoisePers} (Bottom), we track the maximum persistence (orange) and all other lower persistences (gray), i.e. second maximum persistence, third maximum persistence, etc., in dimension $2$.
\begin{figure}[!htb]
	\centering
	\begin{subfigure}{\textwidth}
		\centering
		\includegraphics[width = 0.8\textwidth]{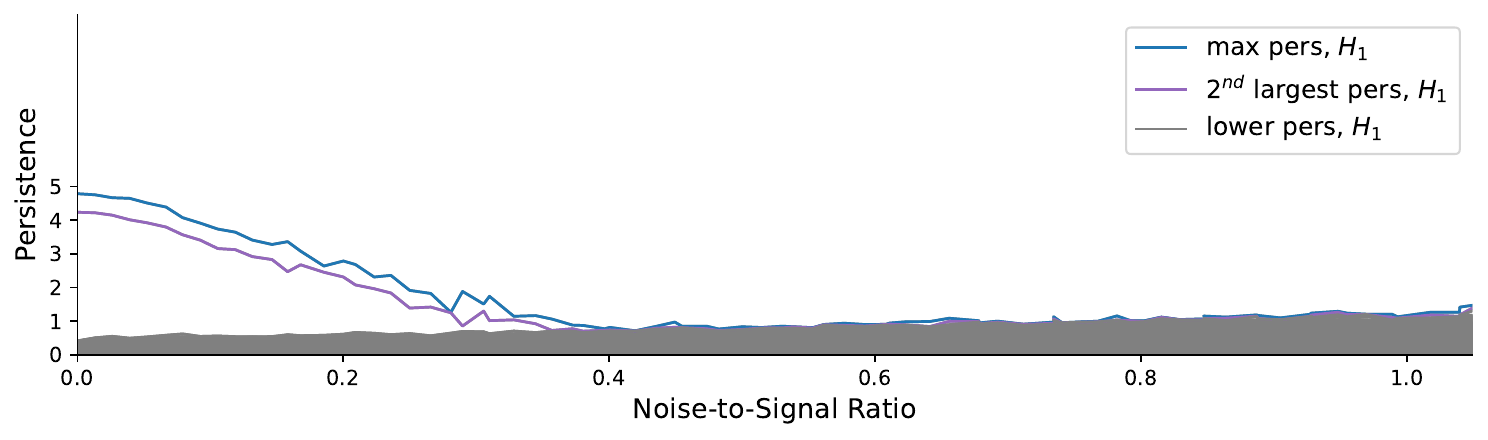}
	\end{subfigure}
	\begin{subfigure}{\textwidth}
		\centering
		\includegraphics[width = 0.8\textwidth]{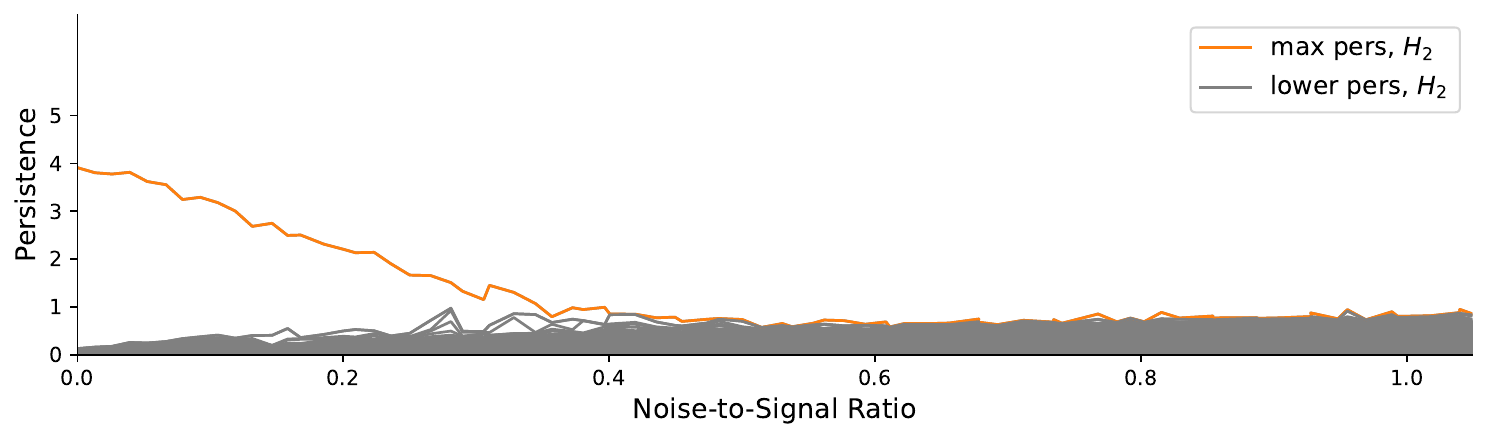}
	\end{subfigure}
	\caption
	{Sliding Window Persistence versus Noise-to-Signal Ratio. The top figure shows the two maximum persistences in dimension $1$ and the bottom figure shows the maximum persistence in dimension $2$. In both cases, the curves for lower persistences, i.e. all other persistences are also added for contrast.}
	\label{fig:NoisePers}
\end{figure}

\subsection{Application: Dissonance Detection in Music}
In music theory, consonance and dissonance are classifications of multiple simultaneous tones. While the former is associated with pleasantness, the latter creates tension as experienced by the listener.
Perfect dissonance occurs when the audio frequencies are irrational with respect to each other. One   such instance is the  \emph{tritone}, which is a musical interval that is halfway between two octaves. Mathematically, for a base frequency $\omega_1$, its tritone is $\sqrt{2}\omega_1$.
We will use the theory of sliding window embedding to quantify quasiperiodicity from a dissonant sample.
For the purpose of this application, we use a $5$-second audio recording of a brass horn playing the tritone\footnote{generously provided by Adam Huston.}. The signal was read using \textsf{wavfile.read()} and the resulting amplitude plot is shown in Figure \ref{fig:MusicAudioWave} (Top).
Like before, in order to perform sliding window analysis, we need to choose appropriate parameters $d$ and $\tau$.
We proceed exactly as before  with the spectral analysis shown in Figure
\ref{fig:MusicAudioWave} (Bottom).
\begin{figure}[!htb]
	\centering
	\subfloat{\includegraphics[width = \textwidth]{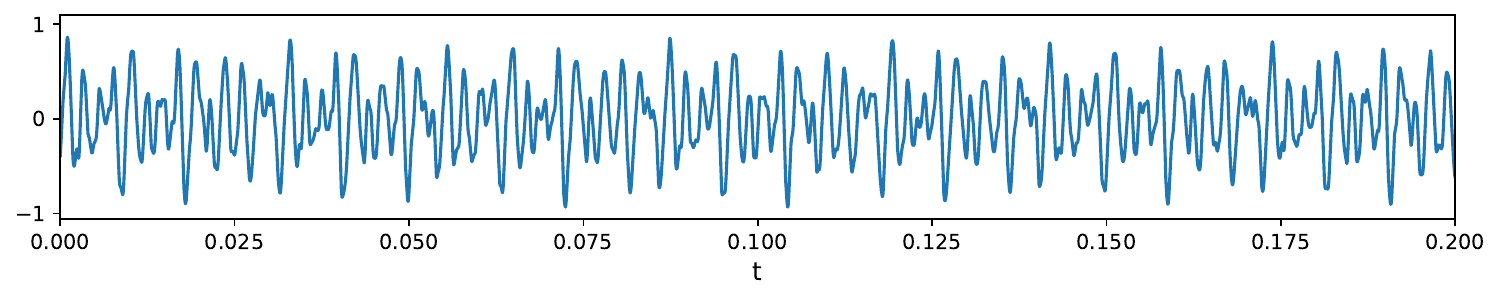}
	}
	
	\subfloat{\includegraphics[width = \textwidth]{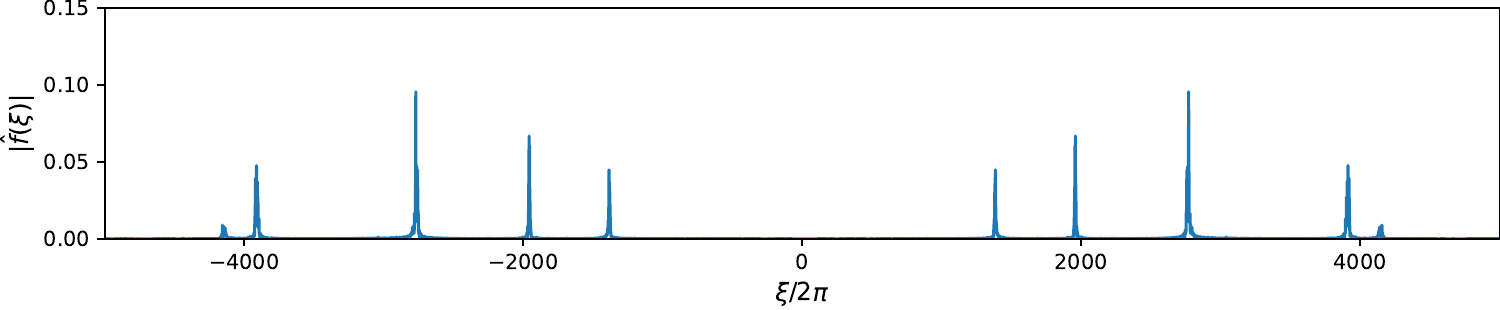}
	}
	\caption
	{(Left) The plot of a dissonant music sample created on a brass horn. (Right) The amplitude-frequency spectrum of the music sample plot using Fast Fourier Transform.}
	\label{fig:MusicAudioWave}
\end{figure}

We then find peaks with height at least $0.04$ and at least $100$ radians per second apart to detect prominent frequencies which we will use for estimation of the embedding parameters. See Table \ref{tab:musicfreq}.
\begin{table}[!htb]
	\centering
	\begin{tabular}{|c|c|c|c|c|}
		\hline
		\textbf{Angular Frequencies} & 1384.93 & 1957.83 & 2769.86& 3911.93 \\
		\hline
		\textbf{Frequencies (Hz)} & 220.41  & 311.59 & 440.83 & 622.60\\
		\hline
		\textbf{Proportion} & 1 & 1.4137 $\approx \sqrt{2}$   & 2 & 2.8246 $\approx2\sqrt{2}$ \\
		\hline
	\end{tabular}
	\caption
	{List of frequencies in the positive side of the (symmetric) amplitude-frequency spectrum: First row: list of detected frequencies. Second row: their conversion to Hertz. Third row: ratio with respect to the first row.}\label{tab:musicfreq}
\end{table}
The resulting embedding parameters are $d = 8$ and $\tau = 0.0285736$.
We use cubic splines to compute the sliding window vectors and present the PCA representation of the point cloud, along with the persistence diagrams computed for $1300$ landmarks, i.e. \verb"maxmin" subsample as defined in Example \ref{example: tau}, in Figure \ref{fig:MusicPersDiag}.
\begin{figure}[!htb]
	\centering
	\hspace{-1cm}
	\begin{subfigure}{0.4\textwidth}
		\centering
		\includegraphics[width = \textwidth]{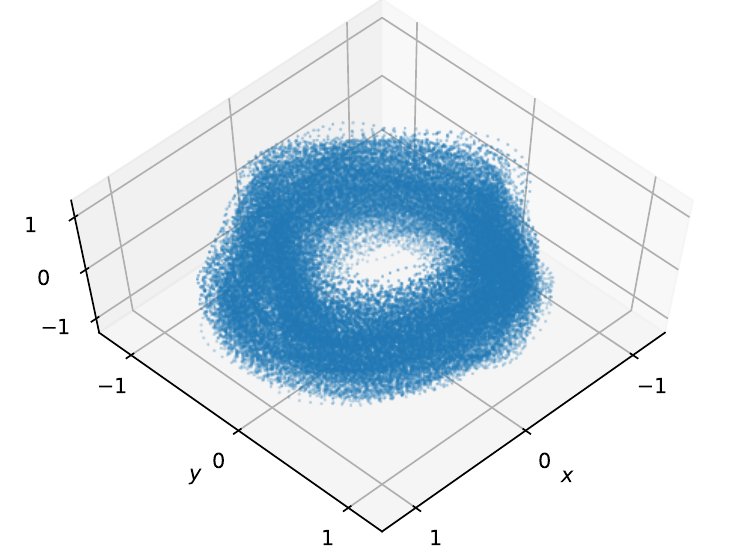}
	\end{subfigure}
	\hspace{.5cm}
	\begin{subfigure}{0.33\textwidth}
		\centering
		\includegraphics[width = \textwidth]{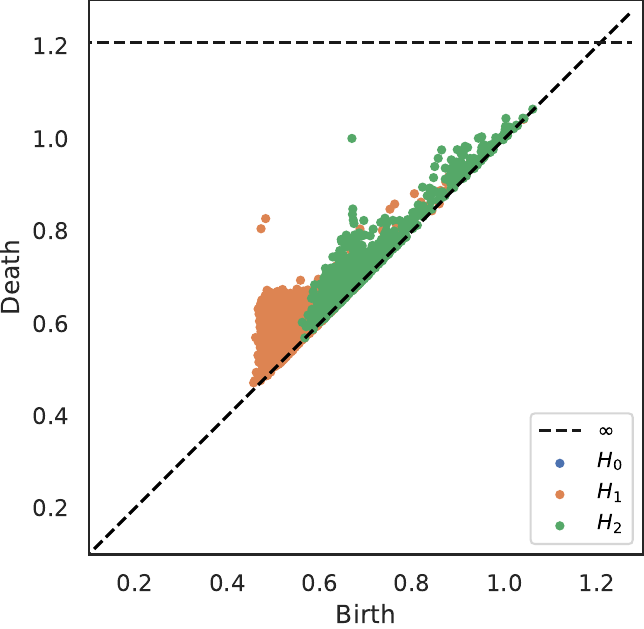}
	\end{subfigure}
	\hspace{.5cm}
	\begin{subfigure}{0.15\textwidth}
		\centering
		\includegraphics[width = \textwidth]{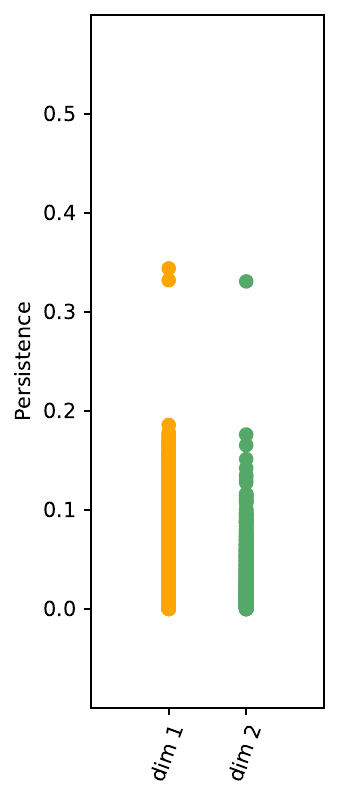}
	\end{subfigure}
	\caption
	{(Left) PCA representation of the sliding window point cloud. (Middle) Persistence Diagrams in homological dimensions $0$, $1$ and $2$. (Right) The persistence scatter plot.}
	\label{fig:MusicPersDiag}
\end{figure}

In Figure \ref{fig:MusicPersDiag}, the persistence diagrams (middle) indicate that the sliding window point cloud has two high persistence features in dimension $1$ and one high persistence feature in dimension $2$. This claim is validated with the persistence scatter plot (right). This tell us that the point cloud fills a two dimensional torus (perhaps a very twisted one) embedded in $\R^9$, which verifies that the dissonant music sample was indeed quasiperiodic.

\section*{Acknowledgments}

This work was partially supported by the National Science Foundation through grants DMS-1622301,  CCF-2006661,
and CAREER award  DMS-1943758. The authors of this paper would like to thank Adam Huston for the audio recording of the brass horn. The first author would like to thank Rosemarie Bongers for discussions on some of the Harmonic Analysis aspects of this paper. 

\section*{Declaration}

\subsection*{Conflict of interest}
On behalf of all authors, the corresponding author states that there is no conflict of interest.
	
\bibliographystyle{amsplain}
\bibliography{SWQuasiperiodicityBibliography}	
	
\end{document}